\documentclass[11point]{amsart}
\usepackage{amsmath, xcolor}
\usepackage{amscd}

\usepackage{amssymb,amsmath,amscd,epsfig,amsthm,enumerate,import,graphicx}
\usepackage{caption}
\usepackage{subcaption}

\usepackage{geometry}                
\geometry{letterpaper}                   

\newtheorem{theorem}{Theorem}[section]
\newtheorem{proposition}[theorem]{Proposition}
\newtheorem{lemma}[theorem]{Lemma}
\newtheorem{corollary}[theorem]{Corollary}
\theoremstyle{definition}
\newtheorem{definition}[theorem]{Definition}

\theoremstyle{remark}
\newtheorem{remark}[theorem]{Remark}

\newcommand{\be}{\begin{equation}}
\newcommand{\ee}{\end{equation}}

\makeatletter
\newcommand{\tpitchfork}{%
  \vbox{
    \baselineskip\z@skip
    \lineskip-.52ex
    \lineskiplimit\maxdimen
    \m@th
    \ialign{##\crcr\hidewidth\smash{$-$}\hidewidth\crcr$\pitchfork$\crcr}
  }%
}
\makeatother

\newcommand{\D}{{\bf D}}

\newcommand{\R}{{\bf R}}

\newcommand{\s}{{\bf s}}

\newcommand{\zbar}{\overline{z}}
\newcommand{\wbar}{\overline{w}}

\newcommand{\bbC}{{\mathbb C}}

\newcommand{\bbN}{{\mathbb N}}
\newcommand{\bbR}{{\mathbb R}}
\newcommand{\bbH}{{\mathbb H}}

\newcommand{\calV}{{\mathcal V}}
\newcommand{\calD}{{\mathcal D}}
\newcommand{\calB}{{\mathcal B}}

\newcommand{\calU}{{\mathcal U}}

\newcommand{\calK}{{\mathcal K}}

\newcommand{\calF}{{\mathcal F}}
\newcommand{\calC}{{\mathcal C}}
\newcommand{\calR}{{\mathcal R}}

\newcommand{\calW}{{\mathcal W}}
\newcommand{\calN}{{\mathcal N}}
\newcommand{\calO}{{\mathcal O}}

\newcommand{\calI}{{\mathcal I}}

\newcommand{\calT}{{\mathcal T}}

\newcommand{\tr}{{\mbox{Tr}}}

\newcommand{\dbar}{\bar\partial}

 \newcommand\floor[1]{\lfloor#1\rfloor}

\newcommand{\inner}[2]{\langle#1\,,#2\rangle}
\newcommand{\norm}[1]{\| #1\|}

\newcommand{\ket}[1]{| #1\rangle}

\renewcommand{\Box}{\square}

\reversemarginpar

  \DeclareMathOperator{\Det}{Det}
  
  \DeclareSymbolFont{bbold}{U}{bbold}{m}{n}
\DeclareSymbolFontAlphabet{\mathbbold}{bbold}

\newcommand{\1}{\mathbbold{1}}
  
  \newcommand{\Sz}{Szeg\"o\ }
 
 \newcommand{\C}{\bbC}
 \newcommand{\N}{\bbN}
  \newcommand{\barw}{\wbar}

\begin{document}

\title{Szeg\"o limit theorems for singular Berezin-Toeplitz operators}
\author{Salvador P\'erez-Esteva}
\thanks{S. P-E. partially supported by the Mexican Grant PAPIIT-UNAM IN106418}
\address{Instituto de Matem\'aticas\\
Unidad Cuernavaca\\
Universidad Nacional Aut\'onoma de M\'exico}
\email{spesteva@im.unam.mx}
\author{Alejandro Uribe}
\thanks{}
\address{Mathematics Department\\
University of Michigan\\Ann Arbor, Michigan 48109}
\email{uribe@umich.edu}

\date{}
\begin{abstract}
We consider Berezin-Toeplitz operators whose multipliers are compactly
supported densities carried by 
a submanifold $\Gamma\subset\bbC^N$.  We compute asymptotically the moments
of their spectral measures, and we prove Szeg\"o limit theorems
in cases when $\Gamma$ is isotropic or
co-isotropic, from which Weyl estimates follow.  
We also obtain asymptotics of the Schatten norms of such
operators. Rescaled versions of these operators can be thought of as
quantum mechanical mixed states, and our results give the semi-classical limit of 
their entropy.
\end{abstract}
\newcommand{\constants}{\left(\frac{k}{\pi}\right)^N}

\keywords{Toeplitz operators, Szeg\"o limit theorems, semiclassical analysis}
\subjclass[2010]{47B35, 81Q20}

\maketitle

\centerline{Dedicated to Victor Guillemin on his 80\textsuperscript{th} Birthday}
\bigskip

\tableofcontents

\section{Introduction and statements of the main results}

We will consider operators on the weighted Bargmann space
\begin{equation}\label{}
\calB_k = \left\{ \psi\in L^2(\bbC^N, dL)\;;\; \psi(z) = f(z)e^{-k|z|^2/2},\ \dbar f = 0\right\}
\end{equation}
where $dL$ denotes Lebesgue measure.

\medskip
Let $\Gamma\subset\bbC^N\cong\bbR^{2N}$ be a smooth submanifold.
Although some of our examples of $\Gamma$ are non-compact, we will only
be considering symbols or amplitudes on $\Gamma$ that are compactly supported.  
 
\medskip
As a motivation for the type of operators we consider in this article, let $d\sigma$
be the measure induced on $\Gamma$ by Lebesgue measure and $L^2(\Gamma)$
the corresponding $L^2$ space.  Let
\[
\calR_k: \calB_k \to L^2(\Gamma)
\]
be the restriction operator, which is bounded, and let
\[
\calR_k^* :L^2(\Gamma)\to \calB_k 
\]
be its adjoint.  Then the self-adjoint operator
\begin{equation}\label{tGamma}
T_\Gamma:= \calR^*\circ\calR: \calB_k\to \calB_k
\end{equation}
is an example of what 
we will call a singular Toeplitz operator, since it can be considered as a Toeplitz
operator with a multiplier that is a distribution supported on $\Gamma$, as the following lemma
shows:
\begin{lemma}
Let $\Pi_k(z,\wbar)$ be the reproducing kernel for $\calB_k$, that is,
the Schwartz kernel of the projection $\Pi_k L^2(\bbC)\to\calB_k$.  
Then
\[
\forall \psi\in\calB_k\qquad T_\Gamma(\psi)(z) = \int_\Gamma \Pi_k(z,\wbar)\,\psi(w)\, d\sigma(w).
\]
\end{lemma}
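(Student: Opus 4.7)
The plan is first to derive a pointwise formula for $\calR_k^*\phi$ for an arbitrary $\phi\in L^2(\Gamma)$ using the reproducing kernel, and then specialize to $\phi=\calR_k\psi=\psi|_\Gamma$ to read off the claimed identity for $T_\Gamma\psi$.

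The key ingredient is the coherent state
\[
K_z(w):=\overline{\Pi_k(z,\wbar)},
\]
which I expect to lie in $\calB_k$ and to represent pointwise evaluation at $z$. Namely, the Hermitian symmetry $\overline{\Pi_k(z,\wbar)}=\Pi_k(w,\overline{z})$ follows from the self-adjointness of the orthogonal projection $\Pi_k$, and together with the reproducing property $f(z)=\int\Pi_k(z,\wbar)\,f(w)\,dL(w)$ this gives $f(z)=\la f,K_z\ra_{\calB_k}$ for every $f\in\calB_k$. Applying this identity with $f=\calR_k^*\phi$ and using the defining property of the adjoint, I get
\[
(\calR_k^*\phi)(z)=\la \calR_k^*\phi,K_z\ra_{\calB_k}=\la \phi,\calR_k K_z\ra_{L^2(\Gamma)}=\int_\Gamma \Pi_k(z,\wbar)\,\phi(w)\,d\sigma(w),
\]
where in the last step I use $\overline{K_z(w)}=\Pi_k(z,\wbar)$. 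Taking $\phi=\psi|_\Gamma$ then finishes the argument.

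I do not foresee any real obstacle: the argument is purely formal once the reproducing property is in hand, and in particular no Fubini-type interchange is needed thanks to the coherent-state trick. The only minor points requiring verification are that $K_z$ genuinely lies in $\calB_k$ (immediate from the explicit Gaussian--holomorphic form of the Bargmann kernel, since $w\mapsto\Pi_k(w,\overline{z})$ is $e^{-k|w|^2/2}$ times an entire function of $w$) and that $\calR_k K_z\in L^2(\Gamma)$ (which follows from the boundedness of $\calR_k$ already invoked in the setup).
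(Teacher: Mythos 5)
Your proof is correct. The paper's argument computes $\calR_k^*$ by testing $\inner{\calR_k^*\varphi}{\psi}$ against an arbitrary $\psi\in\calB_k$, inserting the reproducing formula for $\psi(\zeta)$, and invoking Fubini to swap the $\Gamma$- and $\bbC^N$-integrations before reading off the kernel. You instead extract the pointwise value $(\calR_k^*\phi)(z)$ directly by pairing with the coherent state $K_z\in\calB_k$, using $f(z)=\la f,K_z\ra_{\calB_k}$, and then the adjoint relation $\la\calR_k^*\phi,K_z\ra=\la\phi,\calR_k K_z\ra$ immediately produces a single integral over $\Gamma$ with no interchange of integrals needed. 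This is a genuinely (if modestly) different route: the same two ingredients (reproducing property, definition of adjoint) are used, but ordered so that the coherent state absorbs the $\bbC^N$-integration, trading the Fubini step for the observation that $K_z\in\calB_k$ and $\calR_k K_z\in L^2(\Gamma)$, both of which you correctly justify. Your version is slightly cleaner and closer in spirit to how the paper later works with the Wick kernel $\calK_A(z,w)=\inner{A(e_w)}{e_z}$.
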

\begin{proof}
We compute $\calR^*$.  Let $\varphi\in L^2(\Gamma)$ and $\psi\in \calB_k$.  Then
\[
\inner{\calR^*(\varphi)}{\psi} = \inner{\varphi}{R(\psi)} = \int_\Gamma \varphi(\zeta) \overline{\psi(\zeta)}\,
d\sigma(\zeta).
\]
Now use the reproducing property:
\[
\psi(\zeta) = \int_{\bbC^N} \Pi_k(\zeta,\wbar)\psi(w)\, dL(w),
\]
and using Fubini's theorem we get that
\[
\inner{\calR^*(\varphi)}{\psi} =  \int_{\bbC^N} \overline{\psi(w)}\left(
\int_\Gamma \varphi(\zeta) \overline{\Pi_k(\zeta,\wbar)}\, d\sigma(\zeta)\right)\,dL(w).
\]
Since this is true for all $\psi$,
\begin{equation}\label{rstar}
\calR^*(\varphi) (w)= \int_\Gamma \varphi(\zeta) \Pi_k(w,\overline{\zeta})\, d\sigma(\zeta),
\end{equation}
as desired.
\end{proof}

\medskip
More generally, in this paper we  consider the following operators:
\begin{definition}
Let $a:\Gamma\to\bbC$ be a smooth function of compact support.  
Then $T_{ad\sigma}:\calB_k\to\calB_k$
is the operator
\[
T_{ad\sigma}(\psi) (z) = \int_\Gamma \Pi_k(z,\wbar)\,\psi(w)\, a(w)\, d\sigma(w).
\]
\end{definition}

The reproducing kernel for $\calB_k$ is
\begin{equation}\label{repro}
\Pi_k(z,\wbar) 
= \constants\, e^{k\,z\wbar}\,e^{-k|z|^2/2}\,e^{-k|w|^2/2}\\
= \constants\, e^{-k\,|z-w|^2/2}\, e^{ik\,\omega(z,w)},
\end{equation}
where $\omega$ is the symplectic form
\begin{equation}\label{sForm}
\omega(z,w)= \frac{1}{2i}\left(z\wbar - \zbar w\right).
\end{equation}
Therefore we have the explicit formula 
\begin{equation}\label{}
T_{ad\sigma}(\psi) (z) = \constants e^{-k|z|^2/2}\,\int_\Gamma e^{k\,z\wbar}\,e^{-k|w|^2/2}\,
\psi(w)\, a(w)\, d\sigma(w).
\end{equation}
\bigskip

In the  theory of Bergman spaces, one considers the holomorphic parts of the functions in $\calB_k$, that is,  the Fock space $B_k$ for $k>0,$   consisting of all entire functions $f$ on $\C^N$ such that $f\in L^2(\C^N,dL_k)$, where 
\begin{equation*}
dL_k=\left(\frac{k}{\pi}\right)^N e^{-k\vert z\vert^2}dL.
\end{equation*}

 It is clear that the multiplication operator  $M_kf(z)=e^{-k\vert z\vert^2/2}f(z)$ is an isomorphism of $B_k$ onto $\calB _k$.
 The operator  $M_k^{-1}T_{ad\sigma}M_k$ is the Toeplitz operator on $B_k$ \begin{equation*}
\calT_{ad\sigma}f(z)=\int_{\Gamma}e^{kz\wbar}f(w)\left(\frac{k}{\pi}\right)^N e^{-k\vert w\vert^2}d\sigma(w). 
\end{equation*}
Therefore our results have direct translations to corresponding Toeplitz operators in the Fock space.
We chose to work on $\calB_k$ for convenience.

\bigskip
A second motivation for the study of these operators comes from quantum mechanics. 
for each $w\in\bbC^N$ consider the {\em coherent state at $w$}, $e_w\in\calB_k$:
\[
e_w(z) = \Pi_k(z,\wbar).
\]
The orthogonal projection $P_w$ onto the line spanned by $e_w$ is
\begin{equation}\label{equivalently}
P_w(\psi)(z) = \frac{1}{\Pi_k(w,\wbar)}\, \psi(w)\,\Pi_k(z, \wbar) = \psi(w)\, e^{kz\wbar}\,e^{-|w|^2/2}
\, e^{-|z|^2/2},
\end{equation}
from where it follows that
\begin{equation}\label{superProj}
T_{ad\sigma} = \constants \int_\Gamma a(w)\,P_w\,d\sigma(w).
\end{equation}
In other words, up to an overall normalization factor,
$T_{ad\sigma}$ is a weighted superposition of the rank-one projectors
$P_w$ over $\Gamma$.  From the expression (\ref{superProj}) it follows that
\begin{equation}\label{trace}
\tr(T_{ad\sigma}) = \constants \int_\Gamma a(w)\, d\sigma(w).
\end{equation}
If $a>0$, the operator
\begin{equation}\label{}
\rho_a := \frac{1}{\tr(T_{ad\sigma})}\, T_{ad\sigma}
\end{equation}
is non-negative and of trace one.  In the language of quantum mechanics $\rho$
is a {\em mixed state} or a {\em density matrix}.

\medskip
Mixed states of this kind (with $\Gamma$ Lagrangian) 
have been studied by Yohann Le Floch in \cite{LF}, with $\bbC^N$ 
replaced by a compact quantized K\"ahler manifold.  
(This work of Le Floch focuses on estimating the so-called fidelity of such 
states.)
Quantum mechanically, they are partial traces of a pure state shared by the Bargmann
space and by $L^2(\Gamma)$.  In a standard interpretation, by taking a partial trace over $L^2(\Gamma)$, the latter is acting 
 as the host of the (unknown) background state space.   If $\{\psi_j\}$ is an orthonormal
 basis of eigenfunctions of $\rho_a$ with eigenvalues $p_j$ (so that $p_j\geq 0$ and
 $\sum_j p_j=1$), a possible quantum mechanical interpretation of $\rho_a$ is that
$\forall j$ it represents the state $\psi_j$ with probability $p_j$, see \cite{BH} \S 19.3.
 Since the greatest eigenvalue of $T_\Gamma$ is
 \begin{equation}\label{}
\lambda_{\max}(k) = \sup_{\psi\in\calB_k\setminus\{0\}}
\frac{\int_\Gamma \left|\psi(z)\right|^2\,d\sigma(z)}{\norm{\psi}_{\calB_k}^2}
\end{equation}
(as follows from (\ref{tGamma})),
the  the eigenvector corresponding to the greatest probability $p_j$ 
is the function $\psi\in\calB_k$ that is most concentrated on $\Gamma$ in the $L^2$ sense.

Incidentally, the case $\Gamma$ Lagrangian is special in that the restriction operator
$\calR$ is injective since $\Gamma$ is then totally real and middle-dimensional.

\bigskip
In this paper we study the asymptotics of the spectrum of $T_{ad\sigma}$ and related
operators in the semi-classical limit, that is, as $k\to\infty$.  We will obtain the asymptotic
behavior of the moments of their spectral measure, and \Sz limit theorems of normalizations 
of these operators for special classes of submanifolds $\Gamma$.   We now 
turn to stating our main results.

\subsection{The norm estimate}
Our first result is an upper bound on the operator norm of $T_{ad\sigma}$: 

\begin{theorem}\label{NormEst}  Let $\Gamma$ be a smooth  manifold in  
$\bbC^N$ of dimension $d\leq 2N$ 
and $a\in L^{\infty}(\Gamma)$ of compact support. Then there exists $C>0$ such that the operator norm
of $T_{ad\sigma}$ satisfies
\begin{equation}\label{estima}
 \Vert T_{ad\sigma}\Vert_{\mathcal{L}B_k} \leq C\Vert a\Vert_{\infty}\,k^{N-d/2}.
 \end{equation} 
 \end{theorem}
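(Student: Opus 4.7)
The plan is to exploit the factorization $T_{ad\sigma}=\calR^*M_a\calR$ provided by (\ref{rstar}), where $M_a$ denotes multiplication by $a$ on $L^2(\Gamma)$. Since $a$ has compact support, I can treat $\calR:\calB_k\to L^2(K,d\sigma)$ as the restriction map to the compact set $K:=\Gamma\cap\text{supp}(a)$. Then
\[
\|T_{ad\sigma}\|_{\mathcal{L}(\calB_k)}\leq\|M_a\|\cdot\|\calR\|^2=\|a\|_\infty\cdot\|\calR\calR^*\|_{\mathcal{L}(L^2(K))},
\]
so the theorem reduces to the estimate $\|\calR\calR^*\|\leq C\,k^{N-d/2}$. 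From (\ref{rstar}) and the explicit reproducing kernel (\ref{repro}), $\calR\calR^*$ is the integral operator on $L^2(K)$ with Hermitian kernel $\Pi_k(\zeta,\bar\eta)$, satisfying the pointwise bound $|\Pi_k(\zeta,\bar\eta)|=(k/\pi)^N e^{-k|\zeta-\eta|^2/2}$.

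I would then apply Schur's test with constant weights $p=q=1$. Since $|\Pi_k(\zeta,\bar\eta)|$ is symmetric in $(\zeta,\eta)$, the whole estimate reduces to the single claim
\[
\sup_{\zeta\in K}\int_K e^{-k|\zeta-\eta|^2/2}\,d\sigma(\eta)\leq C\,k^{-d/2}.
\]
To prove this, cover $K$ by a finite atlas of coordinate charts for $\Gamma$, each realized by a smooth embedding $\phi$ of an open subset of $\bbR^d$. Parametrizing $\eta=\phi(s)$ and $\zeta=\phi(s_0)$, a first-order Taylor expansion together with the injectivity of $d\phi$ gives a bi-Lipschitz lower bound $|\zeta-\eta|\geq c|s-s_0|$ throughout a fixed-size neighborhood of $s_0$. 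The near-diagonal contribution is then dominated by the Gaussian integral
\[
\int_{\bbR^d}e^{-kc^2|t|^2/2}\,dt=O(k^{-d/2}),
\]
while the off-diagonal part, where $|\zeta-\eta|\geq\delta>0$, contributes only $O(e^{-k\delta^2/2})$ and is negligible to all orders. Multiplying by the prefactor $(k/\pi)^N$ gives $\|\calR\calR^*\|\leq C\,k^{N-d/2}$, as required.

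The only real subtlety is that the bi-Lipschitz constant $c$ in the local parametrization must be chosen uniformly in $\zeta\in K$. This follows by passing to a finite sub-atlas on which the inverse parametrizations share a common Lipschitz constant, which is straightforward since $K$ is compact and $\Gamma$ is smoothly embedded; beyond this point the argument is a bookkeeping exercise with the Gaussian.
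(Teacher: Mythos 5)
Your proof is correct, and it takes a genuinely different route from the paper's. The paper never uses the factorization $T_{ad\sigma}=\calR^*M_a\calR$. Instead, it (i) proves a sub-mean-value inequality for weighted holomorphic functions, namely $|f(a)|^2e^{-k|a|^2}\leq \frac{k^N}{\gamma(kr^2)}\int_{B(a,r)}|f|^2e^{-k|z|^2}\,dL$, and (ii) covers $\bbC^N$ by a lattice of balls of radius $r$ with bounded overlap, sums the resulting inequalities against the measure bound $\mu(B(z,r))\leq M\|a\|_\infty r^d$, and optimizes by setting $kr^2=1$. You instead bound $\|T_{ad\sigma}\|\leq\|a\|_\infty\|\calR_K\calR_K^*\|$, identify $\calR_K\calR_K^*$ as the integral operator on $L^2(K)$ with Hermitian kernel $\Pi_k(\zeta,\bar\eta)$, apply Schur's test with constant weights, and estimate the Schur constant by a Gaussian integral in local bi-Lipschitz coordinates. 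Both are valid and roughly the same length. The paper's lattice/sub-mean-value argument uses nothing about $\Gamma$ beyond the upper Ahlfors regularity $\sigma(B(z,r))=O(r^d)$, so it would extend verbatim to non-smooth $d$-rectifiable carriers; your chart-based Gaussian bound invokes smooth local parametrizations, though the step $\sup_{\zeta}\int_K e^{-k|\zeta-\eta|^2/2}\,d\sigma(\eta)=O(k^{-d/2})$ can equally well be proven from the Ahlfors bound alone via the layer-cake formula $\int_K e^{-k|\zeta-\eta|^2/2}\,d\sigma(\eta)=\int_0^\infty\sigma\bigl(\{\eta\in K:|\zeta-\eta|<\sqrt{2t/k}\}\bigr)e^{-t}\,dt$, which would put your version at the same level of generality. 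Your factorization makes it immediate that the estimate scales linearly in $\|a\|_\infty$ and handles complex $a$ with no reduction step; the paper handles that by a separate (implicit) reduction to $a\geq 0$ via $|\langle T_{ad\sigma}\psi,\phi\rangle|\leq\langle T_{|a|d\sigma}|\psi|,|\phi|\rangle$.
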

 As we will see this estimate is sharp in general.  A
 Szeg\"o limit theorem applies to a family of operators whose spectra are contained
 on a fixed interval, and this result sets the dependence on $k$ of the normalization
one needs to make on the $T_{ad\sigma}$ in order to obtain such a theorem.

\subsection{The trace of a multiple composition}
Next we state a general theorem on the asymptotics of the composition of $n$
operators of the form $T_{ad\sigma}$ associated to an arbitrary submanifold 
 $\Gamma\subset\bbC^N$. 
 
To state our result we need to introduce an endomorphism of the tangent bundle of 
$\Gamma$ which we denote by $K:T\Gamma\to T\Gamma$ and is
defined as follows.  Let $\Pi_w: \bbR^{2N}\to T_w\Gamma$ be the
orthogonal projection.  Then $K_{w}: T_{w}\Gamma\to T_{w}\Gamma,$ is defined by
\begin{equation}\label{}
K_w := \Pi_w\circ J
\end{equation}
where $J:\bbR^{2N}\to\bbR^{2N}$ is the complex structure such that
\begin{equation}\label{}
\omega(v,w) = v\cdot J(w).
\end{equation}
Specifically,  the right-hand side is the dot product:  In complex notation
$u\cdot v = \Re\left(\sum_{j=1}^N u_j\,\overline{v_j}\right)$, and, in accordance with
(\ref{sForm}),  $J$ is multiplication by $\sqrt{-1}$.

One can say that $K$ is the projection of $J$ on the tangent spaces of $\Gamma$.
$K_w$ is skew-adjoint with respect to the Euclidean inner product on $T_w\Gamma$, and
therefore its eigenvalues are purely imaginary and come in conjugate pairs.  Let us write 
the non-zero eigenvalues with multiplicities as
\[
\pm i \lambda_\ell,\quad 0< \lambda_1\leq\cdots \leq \lambda_r.
\]
Here $r$ is half the rank of  $K_w$. 
In general the $\lambda_\ell$ and $r$ depend on the base point $w\in \Gamma$, but we have
suppressed that dependence from the notation, for simplicity.

\smallskip
We can now state:
\begin{theorem}\label{ManyOps}
Let $a_j\in C_0^\infty(\Gamma, \bbC)$, $j=1,\ldots , n$,
and consider the composition
 \[
 \Upsilon := T_{a_1d\sigma}\circ\cdots\circ T_{a_nd\sigma}.
 \]
 Then
\begin{equation}\label{laMeraPapa}
\tr\left(\Upsilon \right) = 
\left[ 2^{d/2}\left(\frac{k}{\pi}\right)^{N-d/2}\right]^n\, \left(\frac{k}{2\pi}\right)^{d/2}\ 
\left(\int_{\Gamma}{\frac{1}\Delta_n}\prod_{j=1}^n a_j(w)\, d\sigma(w) + O(1/k)\right)
\end{equation}
where
\begin{equation}\label{deltaene}
\Delta_n = n^{\frac{d}{2}-r}
\prod_{\ell = 1}^r\,
\frac{(1+\lambda_\ell)^{n} - (1-\lambda_\ell)^n}{2\lambda_\ell} .
\end{equation}
\end{theorem}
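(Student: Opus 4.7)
The plan is to express $\tr(\Upsilon)$ as a concentrated integral over $\Gamma^n$ and evaluate it by a complex Laplace method. Writing $T_{a\,d\sigma}=\calR^* M_a\calR$ (with $M_a$ multiplication on $L^2(\Gamma)$), cyclicity of the trace gives $\tr(\Upsilon)=\tr(M_{a_1}SM_{a_2}S\cdots M_{a_n}S)$, where $S=\calR\calR^*$ has Schwartz kernel $S(\zeta,\zeta')=\Pi_k(\zeta,\bar\zeta')$. Substituting (\ref{repro}) gives
\begin{equation*}
\tr(\Upsilon)=\left(\frac{k}{\pi}\right)^{Nn}\int_{\Gamma^n}\prod_{j=1}^n a_j(\zeta_j)\,\exp\bigl(-\tfrac{k}{2}\sum_j |\zeta_j-\zeta_{j+1}|^2+ik\sum_j\omega(\zeta_j,\zeta_{j+1})\bigr)\,\prod_j d\sigma(\zeta_j),
\end{equation*}
with cyclic indexing $\zeta_{n+1}=\zeta_1$. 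The real part of the exponent vanishes exactly on the small diagonal $\{\zeta_1=\cdots=\zeta_n\}$, so the contribution from the complement of any fixed tubular neighborhood of the diagonal is $O(k^{-\infty})$.

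Near the diagonal I parametrize by $w=\zeta_1\in\Gamma$ and $\zeta_j=w+v_j+N_w(v_j)$ for $j\geq 2$, with $v_j\in T_w\Gamma$ and $N_w:T_w\Gamma\to(T_w\Gamma)^\perp$ vanishing to second order at $0$; the rescaling $v_j=u_j/\sqrt{k}$ introduces a Jacobian $k^{-d(n-1)/2}$. The orthogonality $(v_j-v_{j+1})\perp(N_w(v_j)-N_w(v_{j+1}))$ turns the real part of the exponent into $-\tfrac12\sum_j|u_j-u_{j+1}|^2+O(k^{-1})$; antisymmetry of $\omega$ kills all $w$-linear contributions after cyclic summation, and since $u_j\in T_w\Gamma$ one may replace $J$ by $K_w=\Pi_w\circ J$, obtaining $k\sum_j\omega(\zeta_j,\zeta_{j+1})=\sum_j u_j\cdot K_w u_{j+1}+O(k^{-1/2})$. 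Standard complex Laplace asymptotics (the real part of the phase is negative semi-definite, vanishing only at the critical set) then give
\begin{equation*}
\tr(\Upsilon)=\left(\frac{k}{\pi}\right)^{Nn}k^{-d(n-1)/2}\left[\int_\Gamma \prod_j a_j(w)\,I(w)\,d\sigma(w)+O(k^{-1})\right],
\end{equation*}
where $I(w)=\int_{(T_w\Gamma)^{n-1}}\exp\bigl(-\tfrac12\sum_j|u_j-u_{j+1}|^2+i\sum_j u_j\cdot K_w u_{j+1}\bigr)\,du_2\cdots du_n$ with $u_1\equiv 0$.

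It now suffices to show $I(w)=(2\pi)^{(n-1)d/2}/\Delta_n(w)$; after regrouping powers of $k,\pi,2$ this is exactly (\ref{laMeraPapa}). Decompose $T_w\Gamma$ orthogonally into $\ker K_w$ (dimension $d-2r$) and the $r$ two-dimensional $K_w$-invariant blocks $V_\ell$ on which $K_w$ has eigenvalues $\pm i\lambda_\ell$; the Gaussian integral factors accordingly. Each scalar zero mode contributes $(2\pi)^{(n-1)/2}/\sqrt{n}$, since the reduced cyclic Laplacian (with one coordinate fixed at $0$) has determinant equal to the number $n$ of spanning trees of $C_n$. On each 2D block, diagonalizing $K_w$ in the complex eigenbasis $f_\pm=\tfrac{1}{\sqrt 2}(1,\pm i)$ and writing each real vector $u_j=x_j f_++\bar x_j f_-$ recasts the integral as a complex Gaussian on $\mathbb{C}^{n-1}$ whose matrix is tridiagonal with $2$ on the diagonal and $-(1+\lambda_\ell),\,-(1-\lambda_\ell)$ on the super- and sub-diagonals. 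The three-term recursion $D_m=2D_{m-1}-(1-\lambda_\ell^2)D_{m-2}$ has characteristic roots $1\pm\lambda_\ell$, so
\begin{equation*}
D_{n-1}=\frac{(1+\lambda_\ell)^n-(1-\lambda_\ell)^n}{2\lambda_\ell},
\end{equation*}
and, after accounting for the Jacobian between $u_j$-coordinates and the $x_j$, each block contributes $(2\pi)^{n-1}/D_{n-1}$. Multiplying the scalar and block contributions gives $I(w)=(2\pi)^{(n-1)d/2}/\Delta_n(w)$.

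The main obstacle is the evaluation of the 2D-block Gaussian integral. The closed form $\Delta_n$ is not obvious from the outset: it emerges only after choosing the $K_w$-eigenbasis, which simultaneously diagonalizes the symplectic rotation and reduces the circulant-with-one-coordinate-fixed structure to a tractable non-cyclic tridiagonal determinant. One must also verify that the Hermitian part of the resulting complex matrix remains positive definite (this follows from $|\lambda_\ell|\leq 1$, itself a consequence of $K_w$ being the composition of two operators of norm $1$). A subsidiary technical point is uniform control, in $w\in\Gamma$, of the higher-order Taylor remainders in $u/\sqrt k$, which is standard given the negative-definite Gaussian weight.
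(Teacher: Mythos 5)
Your proposal is correct and arrives at the same formula, but it evaluates the key Gaussian via a genuinely different route from the paper. The paper computes the stationary-phase integral by forming the Hessian as a block tridiagonal Toeplitz matrix $S_{n-1}$ (blocks $2G$, $-G\mp iH$) and then evaluating its determinant via Silvester's theorem for determinants of block matrices with commuting entries, obtaining a block recursion whose closed form is worked out in the appendix. You instead parametrize transversally to the diagonal in orthonormal tangent coordinates (so $G=I$ from the start), write the $\s$-Gaussian as $\exp\bigl(-\tfrac12\sum_j|u_j-u_{j+1}|^2 + i\sum_j u_j\cdot K_w u_{j+1}\bigr)$, and decompose $T_w\Gamma$ into $\ker K_w$ and the $2$-dimensional $K_w$-invariant blocks, evaluating each factor separately: the scalar zero modes give $(2\pi)^{(n-1)/2}/\sqrt n$ via the reduced cycle-Laplacian determinant, and each $2$-dimensional block, after passing to the $K_w$-eigenbasis $f_\pm$, yields a non-Hermitian tridiagonal Gaussian on $\bbC^{n-1}$ with determinant $D_{n-1}=\tfrac{(1+\lambda_\ell)^n-(1-\lambda_\ell)^n}{2\lambda_\ell}$ from a three-term recursion. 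Both methods are sound; your spectral decomposition of $K_w$ is conceptually cleaner and makes $\Delta_n$ emerge directly as a product over $K_w$-blocks, whereas the paper's block-determinant algebra, though heavier, applies without choosing an adapted basis for $T_w\Gamma$ and packages the answer as a single polynomial in $W=G^{-1}H$. One small correction: the positive definiteness of the Hermitian part of your tridiagonal matrix is automatic (it is $\text{tridiag}(-1,2,-1)$ independent of $\lambda_\ell$), so it does not rely on $|\lambda_\ell|\leq 1$; that bound (true because $K_w=\Pi_w J$ with $\|\Pi_w\|,\|J\|\leq 1$) serves instead to keep $\Delta_n$ manifestly positive, which is consistent with the manifest positivity of the Gaussian integral.
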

If we specialize to the case $a_1 = a_2 = \cdots = a_n = a$, this theorem gives us the
asymptotics of the moments of the spectral measure of $T_{ad\sigma}$.

\smallskip
This theorem is proved by the method of stationary phase, and
$\Delta_n$ arises as a factor of the square root of the Hessian of the phase.  
$\Delta_n$ will be constant (with respect to $w\in \Gamma$) and explicit in       
the special cases we will consider below.  Its dependence on $n$ however 
introduces a new feature with respect to the ``standard" \Sz limit theorem,
which is the previous theorem in the case $\Gamma = \bbC^N$:

\begin{corollary}
Let $a\in C_0^{\infty}(\bbC^N, \bbC)$.  Then for each $n=1,2,\cdots$
\[
\tr (T_{adL}^n) = \left(\frac{k}{\pi}\right)^N\,
\left(\int_{\bbC^N} a(w)^n\, dL(w) + O(1/k)\right).
\]
\end{corollary}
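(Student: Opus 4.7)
The plan is to simply specialize Theorem~\ref{ManyOps} to the case $\Gamma=\bbC^N$, $d=2N$, with $a_1=\cdots=a_n=a$. Everything in the statement is then an explicit computation once we identify the endomorphism $K$ and its eigenvalues for this particular $\Gamma$.

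First I would observe that when $\Gamma=\bbC^N$ we have $T_w\Gamma = \bbR^{2N}$ for every $w$, so the orthogonal projection $\Pi_w$ is the identity and therefore
\[
K_w = \Pi_w\circ J = J.
\]
The eigenvalues of $J$ on $\bbR^{2N}$ are $\pm i$, each with multiplicity $N$, so in the notation of Theorem~\ref{ManyOps} we have $r=N$ and $\lambda_1=\cdots=\lambda_N=1$. Plugging these into the formula \eqref{deltaene} for $\Delta_n$ gives
\[
\Delta_n = n^{N-N}\prod_{\ell=1}^{N}\frac{(1+1)^n-(1-1)^n}{2\cdot 1} = \prod_{\ell=1}^{N}\frac{2^n}{2} = 2^{(n-1)N}.
\]

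Next I would simplify the scalar prefactor in \eqref{laMeraPapa} with $d=2N$:
\[
\left[2^{d/2}\left(\frac{k}{\pi}\right)^{N-d/2}\right]^{n}\left(\frac{k}{2\pi}\right)^{d/2} = \left[2^{N}\cdot 1\right]^{n}\left(\frac{k}{2\pi}\right)^{N} = \frac{2^{nN}\,k^{N}}{2^{N}\pi^{N}} = 2^{(n-1)N}\left(\frac{k}{\pi}\right)^{N}.
\]
Dividing by $\Delta_n = 2^{(n-1)N}$ inside the integral cancels the factor of $2^{(n-1)N}$ exactly, leaving precisely the prefactor $(k/\pi)^{N}$ asserted in the corollary. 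Since $a_1=\cdots=a_n=a$, the integrand becomes $a(w)^n$ and the measure $d\sigma$ is Lebesgue measure $dL$, which yields the stated formula.

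There is no real obstacle here: the entire content of the corollary is contained in verifying that the two factors of $2^{(n-1)N}$ (one from the prefactor with $d=2N$, one from $\Delta_n$ with $r=N$ and all $\lambda_\ell=1$) cancel, and that the $k$-powers collapse to $(k/\pi)^N$ when $d=2N$. All of this is immediate from the formulas, and the $O(1/k)$ remainder is inherited verbatim from Theorem~\ref{ManyOps}.
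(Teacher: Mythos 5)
Your proposal is correct and follows the paper's own argument exactly: specialize Theorem~\ref{ManyOps} to $\Gamma=\bbC^N$, observe that $K=J$ so $r=N$ and all $\lambda_\ell=1$, compute $\Delta_n=2^{N(n-1)}$, and note that this cancels the $2^{N(n-1)}$ coming from the prefactor when $d=2N$. The extra detail you supply (explicitly simplifying the powers of $k$) is harmless and correct.
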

\begin{proof}
Since $d=2N$, the constant in front of the integral in (\ref{laMeraPapa}) evaluates to
$2^{N(n-1)}\, \left(\frac{k}{\pi}\right)^N.$
On the other hand in this case $K=J$, so $r=N$ and all the $\lambda_\ell$ are equal to one.
Therefore $\Delta_n = 2^{N(n-1)}$, and the powers of $2$ cancel each other out.
\end{proof}

This Theorem is stated in \cite{BMS} with $\bbC^N$ replaced by a compact K\"ahler
manifold.  It is a ``folk" theorem in the $\bbC^N$ case, and it clearly holds for 
more general amplitudes $a$, for example for $a$ in the Schwartz class. 
We were not able to find a reference for it in the literature.

\medskip
More generally, in case $\Gamma$ is a complex submanifold of real dimension $d=2r$,
all $\lambda_\ell$ are equal to one which implies  (by a similar argument as before) that
\[
\Delta_n = 2^{r(n-1)},
\]
and Theorem \ref{ManyOps} in turn implies that
\[
\tr \left(\left(\frac{\pi}{k}\right)^{N-r}T_{ad\sigma}\right)^n = \left(\frac{k}{\pi}\right)^r\,
\left(\int_{\Gamma} a(w)^n\, dL(w) + O(1/k)\right).
\]
This indicates that $T_{ad\sigma}$ is really a standard Berezin-Toeplitz operator
on a K\"ahler manifold of real dimension $d=2r$ except that it has a large kernel, 
corresponding to all holomorphic functions vanishing on $\Gamma$.

\subsection{The \Sz limit theorem}

We now specialize Theorem (\ref{ManyOps}) to certain classes of submanifolds $\Gamma$,
which will allow us to obtain finer results.

We begin by recalling some notions from 
symplectic geometry.  Given a subspace $V\subset\bbR^{2N}\cong\bbC^N$, let us denote
its symplectic annihilator by
\[
V^\circ = \{x\in \bbR^{2N}\;;\; \forall v\in V\ \omega(x,v)=0\}.
\]
If we denote by $V^\bot$ the subspace perpendicular to $V$ with respect to the Euclidean
inner product,  it is easy to check that
\[
V^\bot = J\left(V^\circ\right),
\]
and conversely.  It follows that if we define $K:V\to V$ as above, namely
$K = \Pi_V\circ J, \  \Pi_V:\bbR^{2N}\to V\ \text{orthogonal projection},$
then
\begin{equation}\label{usefulFact}
\ker(K) = V^\circ \cap V.
\end{equation}
This will be a useful fact.

\begin{definition}
 $V$ is isotropic
(resp.\ co-isotropic, resp.\ Lagrangian) if and only if 
$V\subset V^\circ$ (resp.\ $V^\circ\subset V$ resp.\ $V^\circ = V$).
A submanifold $\Gamma$ is isotropic (resp.\ co-isotropic, resp.\ Lagrangian) if and only if 
$\forall w\in \Gamma$ the tangent space $T_w\Gamma$ is
isotropic (resp.\ co-isotropic, resp.\ Lagrangian).   
\end{definition}
By the skew symmetry of $\omega$ it is automatic that any curve is isotropic
and any hypersurface is co-isotropic.

\bigskip
We now assume that $\Gamma$ is isotropic or co-isotropic.  The \Sz limit
theorem in both cases can be stated at the same time, if we introduce
the following

\medskip\noindent
{\bf Notation:}  Let $d$ be the dimension of $\Gamma$.  Then we let
\begin{equation}\label{}
d' := d\quad\text{if}\ \Gamma\ \text{is isotropic}, \ \text{and}\ 
d' := 2N-d\quad\text{if}\ \Gamma\ \text{is co-isotropic.}
\end{equation}

\smallskip
In order to obtain a \Sz limit theorem we have to normalize $T_{ad\sigma}$ as follows:  
In both the isotropic and co-isotropic cases, define
\begin{equation}\label{}
S_{ad\sigma}:= 2^{-d'/2}\, \left(\frac{\pi}{k}\right)^{N-d/2}\,T_{ad\sigma}.
\end{equation}
By the norm estimate $\norm{S_{ad\sigma}} = O(1)$.

\bigskip
We will prove that Theorem \ref{ManyOps} implies the following:
\begin{theorem}\label{Szego}
Let $\Gamma$ be isotropic or co-isotropic,
and $a\geq 0$ a smooth function of compact support on $\Gamma$.
Let $R>0$ such that
\begin{itemize}
\item[a)] $\sigma(S_{ad\sigma})\subset [0,R]$ for all $k$, and
\item[b)] $a(\Gamma)\subset [0,R]$.
\end{itemize}
Let  $\varphi$  be any function such that for some $p>0$ $\varphi(t) /t^p\in C[0,R]$.
Then
\begin{equation}\label{szegoLimit}
\lim_{k\rightarrow\infty}2^{d'/2}\,\left(\frac{\pi}{k}\right)^{d/2}\tr(\varphi(S_{ad\sigma}))=
\int_\Gamma \calO_{-d'/2}(\varphi)(a(w))\, d\sigma(w),
\end{equation}
where $\calO_{-\alpha}$ is the operator
\begin{equation}
\calO_{-\alpha}(\varphi)(t):=
\frac{1}{\Gamma(\alpha)}\int_0^{t}\varphi(s)\log(t/s)^{\alpha-1}\frac{ds}{s}
\end{equation}
($\Gamma(\alpha)$ is the gamma function evaluated at $\alpha$)
if $\alpha >0$, and $\calO_0$ is the identity.
\end{theorem}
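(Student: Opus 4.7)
The plan is to verify (\ref{szegoLimit}) first for $\varphi(t)=t^n$ via Theorem~\ref{ManyOps}, extend by linearity to all polynomials vanishing at the origin, and then extend to arbitrary admissible $\varphi$ by uniform approximation combined with a Schatten-type estimate. The geometric input is the explicit form of $\Delta_n$ from (\ref{deltaene}) under the two hypotheses. By (\ref{usefulFact}), $\ker K_w = T_w\Gamma\cap(T_w\Gamma)^\circ$. In the isotropic case this equals $T_w\Gamma$, so $K_w\equiv 0$, $r=0$, and $\Delta_n = n^{d/2}$. In the co-isotropic case $\ker K_w = (T_w\Gamma)^\circ$ has dimension $2N-d$, so $r = d-N$; moreover the symplectic quotient $T_w\Gamma/(T_w\Gamma)^\circ$ inherits a compatible complex structure from $J$, forcing every nonzero eigenvalue of $K_w$ to satisfy $\lambda_\ell = 1$, whence
\[
\Delta_n \;=\; n^{N-d/2}\cdot 2^{(d-N)(n-1)}.
\]

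Plugging these values into Theorem~\ref{ManyOps} (with all $a_j=a$) and using the normalization $S_{ad\sigma}=2^{-d'/2}(\pi/k)^{N-d/2}T_{ad\sigma}$, the powers of $2$, $\pi$, and $k$ collapse to give, in both cases,
\[
2^{d'/2}\Bigl(\frac{\pi}{k}\Bigr)^{d/2}\tr\bigl(S_{ad\sigma}^n\bigr)
\;\xrightarrow[k\to\infty]{}\;
\frac{1}{n^{d'/2}}\int_\Gamma a(w)^n\,d\sigma(w).
\]
On the right-hand side of (\ref{szegoLimit}), the substitution $u=\log(t/s)$ in the defining integral of $\calO_{-\alpha}$ evaluates
\[
\calO_{-d'/2}(t^n)
\;=\;\frac{1}{\Gamma(d'/2)}\int_0^\infty (te^{-u})^n\,u^{d'/2-1}\,du
\;=\;\frac{t^n}{n^{d'/2}},
\]
so (\ref{szegoLimit}) holds for $\varphi(t)=t^n$ and, by linearity, for every polynomial $\varphi$ with $\varphi(0)=0$.

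For a general admissible $\varphi$, write $\varphi(t)=t^p g(t)$ with $g\in C[0,R]$ and approximate $g$ on $[0,R]$ uniformly by polynomials $Q_m$ (Stone--Weierstrass); then $\varphi_m(t):=t^pQ_m(t)$ satisfies $\|(\varphi-\varphi_m)/t^p\|_\infty\to 0$. Since $S_{ad\sigma}\geq 0$ with spectrum in $[0,R]$, the pointwise inequality $|\psi(t)|\leq \|\psi/t^p\|_\infty\,t^p$ upgrades under functional calculus to $|\tr(\psi(S_{ad\sigma}))|\leq \|\psi/t^p\|_\infty\,\tr(S_{ad\sigma}^p)$, so the whole extension reduces to the uniform Schatten-$p$ estimate
\[
2^{d'/2}(\pi/k)^{d/2}\,\tr(S_{ad\sigma}^p)\;=\;O(1)\qquad(k\to\infty)
\]
together with a proof of (\ref{szegoLimit}) for the test functions $\varphi_m$. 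This is the step I expect to be the principal obstacle. For $p\geq 1$ it is immediate: $t^p\leq R^{p-1}t$ on $[0,R]$ and the case $n=1$ supply the Schatten bound, and since $\varphi/t$ is then automatically continuous one may approximate $\varphi$ directly by polynomials vanishing at the origin, to which the preceding step applies. For $p\in(0,1)$ the linear comparison $t^p\leq Ct$ fails and one must both rule out excessive accumulation of small nonzero eigenvalues of $S_{ad\sigma}$ and establish (\ref{szegoLimit}) on the non-polynomial $\varphi_m=t^pQ_m$. My attempt would be to derive the Schatten-$p$ bound by interpolating between Theorem~\ref{NormEst} (the operator-norm bound $\|S_{ad\sigma}\|=O(1)$) and the integer moments just proved, and to extend Theorem~\ref{ManyOps} to fractional powers $S_{ad\sigma}^q$ either by analytic continuation in $n$ or through the Balakrishnan representation $S^q=\frac{\sin(\pi q)}{\pi}\int_0^\infty s^{q-1}\,S(sI+S)^{-1}\,ds$ applied term-by-term to the resolvent asymptotics. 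Granted these, (\ref{szegoLimit}) for $\varphi$ follows by interchanging the $k\to\infty$ and $m\to\infty$ limits, together with continuity of $\calO_{-d'/2}$ on the weighted space $t^pC[0,R]$.
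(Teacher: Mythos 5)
Your overall architecture matches the paper's: establish the polynomial case from Theorem~\ref{ManyOps} by computing $\Delta_n$ under the (co)isotropic hypothesis, verify $\calO_{-d'/2}(t^n)=t^n/n^{d'/2}$, then extend by approximating $\varphi=t^p g$ with $t^pQ_m$ and pushing through a uniform Schatten-$p$ bound. The computation of $\Delta_n$ (both $r=0$, $\lambda_\ell$ absent in the isotropic case, and $r=d-N$, $\lambda_\ell\equiv 1$ in the co-isotropic case) and the resulting identity
\[
2^{d'/2}\Bigl(\tfrac{\pi}{k}\Bigr)^{d/2}\tr(S_{ad\sigma}^n)\to n^{-d'/2}\int_\Gamma a^n\,d\sigma
\]
are correct and agree with the paper's Proposition~(4.4).

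The genuine gap is in your treatment of the Schatten-$p$ estimate for $p\in(0,1)$, which you correctly flag as the principal obstacle but do not actually resolve. Interpolation between the operator-norm bound $\|S\|=O(1)$ and the first moment $\tr(S)=O(k^{d/2})$ cannot give an \emph{upper} bound on $\tr(S^p)=\sum\mu_j^p$: since $\mu_j^p=\mu_j\cdot\mu_j^{p-1}$ and $p-1<0$, a cloud of very small nonzero eigenvalues makes $\sum\mu_j^p$ arbitrarily large relative to $\sum\mu_j$, and neither the norm bound nor the integer moments control how many such eigenvalues there are. Likewise, the proposed Balakrishnan/resolvent route and ``analytic continuation in $n$'' are not obviously tractable here — the resolvent $S(sI+S)^{-1}$ is not itself a singular Toeplitz operator, and it is unclear how Theorem~\ref{ManyOps}'s stationary-phase computation would extend to it. The paper's argument (Lemma~\ref{MainLemma}) is of a different nature and does not pass through interpolation at all: it applies the operator concavity inequality $\langle T^p e,e\rangle\le\langle Te,e\rangle^p$ to the normalized coherent states to obtain $\widetilde{T^p}(z)\le\widetilde{T}(z)^p$, reducing $\tr(T^p)$ to the scalar integral $\calI_p=\int_{\bbC^N}\bigl(\int_\Gamma e^{-k|z-w|^2}a\,d\sigma\bigr)^p dL(z)$, and then bounds $\calI_p\le C k^{-N+d(1-p)/2}$ by localizing to a tubular neighborhood of $\Gamma$ and doing stationary phase in the tangential variables. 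That Gaussian-concentration estimate is what rules out the accumulation of small eigenvalues and makes the final approximation/interchange-of-limits argument go through; without something equivalent, your extension step for $p\in(0,1)$ is incomplete.
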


\begin{remark}
If $\varphi$ is a polynomial without constant term the restriction $a\geq 0$ can be lifted,
see Corollary \ref{TracePowers}
\end{remark}

\medskip
In case $a\equiv 1$ the previous Theorem simplifies to:
\begin{corollary}
For any function $\varphi$ as in Theorem \ref{Szego}, if $d'>0$ and $\Gamma$ is compact
one has
\begin{equation}\label{szegoLimit1}
\lim_{k\rightarrow\infty}2^{d'/2}\,\left(\frac{\pi}{k}\right)^{d/2}\tr(\varphi(S_{d\sigma}))=
\frac{|\Gamma|}{\Gamma(d'/2)}\int_0^{1}\varphi(s)\, \left(-\log(s)\right)^{\frac{d'}{2}-1}\,
\frac{ds}{s}
\end{equation}
where $|\Gamma| = \int_\Gamma d\sigma$ is the volume of $\Gamma$.
\end{corollary}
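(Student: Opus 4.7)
The plan is to invoke Theorem \ref{Szego} directly, specializing to the constant amplitude $a \equiv 1$, and then simplify the right-hand side of (\ref{szegoLimit}) by noting that it factors into a volume term times a universal integral.

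First I would verify the hypotheses of Theorem \ref{Szego}. Since $\Gamma$ is assumed compact, the function $a \equiv 1$ is smooth of compact support on $\Gamma$ and is manifestly nonnegative. The norm estimate of Theorem \ref{NormEst} shows that $\norm{S_{d\sigma}}_{\calL\calB_k} = O(1)$ uniformly in $k$, so one can choose a single $R > 0$ such that both $\sigma(S_{d\sigma}) \subset [0,R]$ for all $k$ and $a(\Gamma) = \{1\} \subset [0,R]$ (any $R \geq \max(1, \sup_k \norm{S_{d\sigma}})$ will do).

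With the hypotheses in place, Theorem \ref{Szego} yields
\[
\lim_{k\to\infty} 2^{d'/2}\, \left(\frac{\pi}{k}\right)^{d/2}\,\tr(\varphi(S_{d\sigma})) = \int_\Gamma \calO_{-d'/2}(\varphi)(a(w))\, d\sigma(w).
\]
Since $a(w) = 1$ for every $w \in \Gamma$, the integrand is the constant $\calO_{-d'/2}(\varphi)(1)$, so the integral evaluates to $|\Gamma| \cdot \calO_{-d'/2}(\varphi)(1)$. Finally, substituting $t = 1$ and $\alpha = d'/2 > 0$ into the definition of $\calO_{-\alpha}$ gives
\[
\calO_{-d'/2}(\varphi)(1) = \frac{1}{\Gamma(d'/2)} \int_0^{1} \varphi(s)\, \log(1/s)^{\frac{d'}{2}-1}\, \frac{ds}{s},
\]
and using $\log(1/s) = -\log(s)$ produces the stated formula. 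There is no real obstacle beyond this bookkeeping: all the analytic work is already encoded in Theorem \ref{Szego}, and the only content here is the elementary observation that setting $a \equiv 1$ collapses the spatial integral to the volume of $\Gamma$.
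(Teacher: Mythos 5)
Your proposal is correct and is essentially the same argument the paper has in mind (the paper states this corollary without a separate proof, treating it as the immediate specialization of Theorem \ref{Szego} to $a \equiv 1$). You verify the hypotheses, observe the integrand becomes the constant $\calO_{-d'/2}(\varphi)(1)$, and unwind the definition of $\calO_{-\alpha}$ at $t=1$; this is precisely the intended bookkeeping.
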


\medskip
\begin{remark}
The operators $\calO_{-\alpha}$ satisfy, for $p>0 $, that
\begin{equation}\label{laPropriedad}
\calO_{-\alpha}(s^p)(t) = \frac{t^p}{p^\alpha}.
\end{equation}
For $\alpha>0$ this follows from the change of variables $x=p\log(t/s)$:
\[
\frac{1}{\Gamma(\alpha)}\int_0^t s^{p}\log(t/s)^{\alpha-1}\frac{ds}{s} =
\frac{1}{\Gamma(\alpha)}\, \int_0^\infty t^p\,e^{-x}\left(\frac{x}{p}\right)^{\alpha-1}\, \frac{dx}{p}
 = \frac{t^p}{p^\alpha}.
\]
\end{remark}

\begin{remark} 
If $d'>0$,  the right-hand side of (\ref{szegoLimit}) equals
\begin{equation}
\frac{1}{\Gamma(d'/2)}\int_{\Gamma}\left(\int_0^{a(w)}\varphi(s)\, \log\left(\frac{a(w)}{s}\right)^{\frac{d'}{2}-1}\,
\frac{ds}{s}\right) d\sigma(w).
\end{equation}
By Fubini's theorem
we can also express this 
as the integral of $\varphi$ with respect to an absolutely continuous
measure $d\nu_a = \calD_a\, ds$ on $[0,R]$, 
\[
\lim_{k\rightarrow\infty}2^{d'/2}\,\left(\frac{\pi}{k}\right)^{d/2}\tr(\varphi(S_{ad\sigma}))=
\int_0^{\max (a)} \varphi(s)\, \calD_a(s)\, ds
\]
with a density given a.e.~ by
\begin{equation}\label{szegoLimit2}
\calD_a(s) = 
\frac{1}{\Gamma(d'/2)\,s}\,
\int_{\{w\in\Gamma\;;\; a(w)\geq s\}}\log\left(\frac{a(w)}{s}\right)^{\frac{d'}{2}-1}d\sigma(w).
\end{equation} 
\end{remark}

This density is supported in $[0, \max(a)]$ and is bounded in any
closed interval $I\subset (0, \max(a)]$ if $d'\geq 2$.
In case $d'=1$ it can be shown that $\calD_a$ is continuous at regular values of $a$.
The graphs of $\calD_a$ in case $a\equiv 1$ for $d'=1$ and $d'=4$ appear in the figure
below.  For all values of $d'$ the density of eigenvalues concentrates at zero, which
is expected by compactness of the operators.  For  $d'=1$ the density of eigenvalues concentrates
at $s=\max(a)$ as well (albeit it temains integrable at $\max(a)$).  
The case $d'=2$ is particularly simple, as the log function disappears.

\begin{figure}[h!]
\centering
\begin{subfigure}{.5\textwidth}
  \centering
  \includegraphics[width=.6\linewidth]{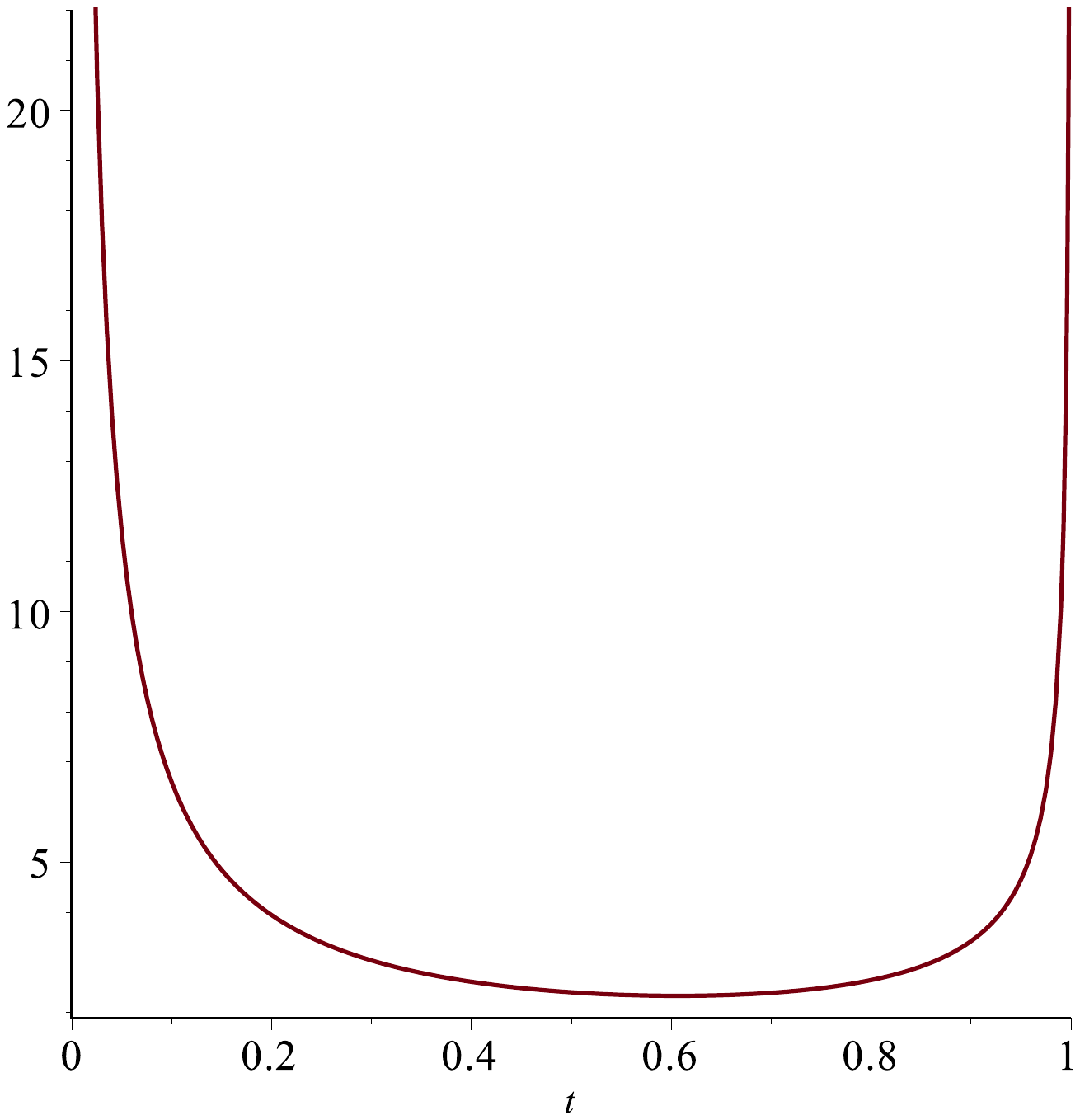}
  \vspace{-2cm}
  \caption{$d'=1$}
  \label{fig:sub1}
\end{subfigure}%
\begin{subfigure}{.5\textwidth}
  \centering
  \includegraphics[width=.6\linewidth]{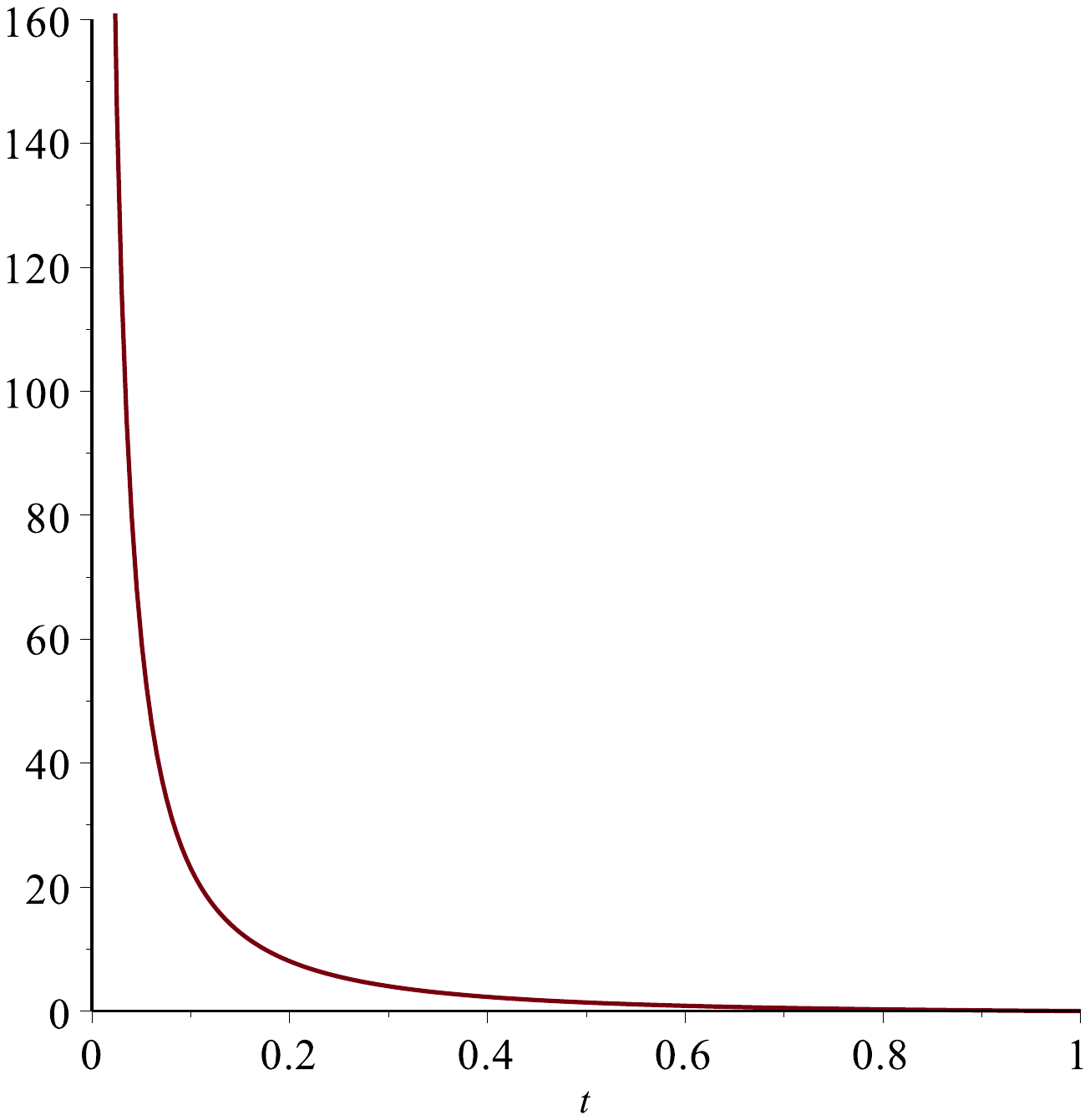}
    \vspace{-2cm}
  \caption{$d'=4$}
  \label{fig:sub2}
\end{subfigure}
\caption{The density $\calD_1$ for $d'=1,\,4.$}
\label{fig:test}
\end{figure}

\begin{remark}
The previous Theorem includes the extreme co-isotropic
case $\Gamma = \bbC^N$.  Since $\calO_0$ is the identity, the limit of 
the spectral measure of $S_{adL}$ is just the push-forward by $a$ of the
volume form on $\bbR^{2N}$.
A similar statement can be derived in the case when $\Gamma$ is a complex submanifold,
but we will not write it explicitly.
\end{remark}

\subsection{Corollaries and further results}

\subsubsection{Weyl estimates}   
As we wil see Theorem \ref{Szego} implies the following Weyl estimate:
\begin{proposition}
Under the hypothesis of Theorem \ref{Szego}, let $I$ denote a closed interval
contained in $(0, \max(a)]$.  Let $N_I(k)$ denote the number of eigenvalues
of $S_{ad\sigma}$ contained in $I$, counted with multiplicities. Then
\[
\lim_{k\to\infty} 2^{d'/2}\,\left(\frac{\pi}{k}\right)^{d/2}\,N_I(k) = \int_I\calD_a(s)\, ds.
\]
\end{proposition}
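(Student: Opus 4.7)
The plan is to approximate the indicator $\chi_I$ from above and below by continuous functions to which Theorem \ref{Szego} applies, and sandwich $N_I(k)$ between the corresponding traces. The key observation is that $I$ is bounded away from $0$, so the approximating functions can be chosen to vanish identically near $0$; for such functions $\varphi(t)/t^p \in C[0,R]$ for \emph{any} $p>0$, so the integrability hypothesis of Theorem \ref{Szego} is trivially met.

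First, since $a\geq 0$, formulas (\ref{superProj}) and (\ref{trace}) show that $T_{ad\sigma}$ is positive and of finite trace, hence trace class; therefore $S_{ad\sigma}$ is compact and self-adjoint with discrete spectrum in $[0,R]$, and by the spectral theorem
\[
N_I(k) = \tr\bigl(\chi_I(S_{ad\sigma})\bigr).
\]

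Next, write $I = [\alpha,\beta]$ with $0<\alpha\leq\beta\leq \max(a)$. For each small $\epsilon>0$ I would pick continuous functions $\varphi_\pm^\epsilon : [0,R]\to[0,1]$, vanishing identically on $[0,\alpha/2]$, with $\varphi_-^\epsilon \leq \chi_I \leq \varphi_+^\epsilon$ and $\varphi_+^\epsilon - \varphi_-^\epsilon$ supported in $[\alpha-\epsilon,\alpha+\epsilon]\cup[\beta-\epsilon,\beta+\epsilon]$. Applying Theorem \ref{Szego} to $\varphi_\pm^\epsilon$, together with the monotonicity $\tr(\varphi_-^\epsilon(S_{ad\sigma}))\leq N_I(k)\leq \tr(\varphi_+^\epsilon(S_{ad\sigma}))$, yields
\[
\int_0^R \varphi_-^\epsilon(s)\,\calD_a(s)\,ds \;\leq\; \liminf_{k\to\infty} 2^{d'/2}\left(\frac{\pi}{k}\right)^{d/2} N_I(k) \;\leq\; \limsup_{k\to\infty} 2^{d'/2}\left(\frac{\pi}{k}\right)^{d/2} N_I(k) \;\leq\; \int_0^R \varphi_+^\epsilon(s)\,\calD_a(s)\,ds.
\]

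Finally I would send $\epsilon\to 0$. By the remarks following Theorem \ref{Szego}, $\calD_a$ is locally bounded on $(0,\max(a)]$ when $d'\geq 2$, and in the borderline case $d'=1$ it is still integrable on $[\alpha/2,R]$ thanks to the integrability statement at $s=\max(a)$ cited in the text. Dominated convergence then gives $\int_0^R \varphi_\pm^\epsilon \calD_a\,ds \to \int_I \calD_a(s)\,ds$, squeezing the two-sided bound to the desired limit. The only non-routine step is this integrability check for $\calD_a$ near the endpoints of $I$; everything else is a standard sandwich argument, and the restriction $I\subset (0,\max(a)]$ is exactly what sidesteps the pile-up of eigenvalues at $s=0$ noted after (\ref{szegoLimit2}).
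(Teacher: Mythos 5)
Your proof is correct and follows essentially the same route as the paper: sandwich $N_I(k)=\tr(\chi_I(S_{ad\sigma}))$ between traces of continuous approximants to the indicator, apply Theorem \ref{Szego} to each, and pass to the limit in the approximation parameter using absolute continuity of $\calD_a\,ds$ (dominated convergence). The only difference is cosmetic — you parametrize the approximants by $\epsilon$ rather than by a sequence index, and you spell out explicitly that they can be chosen to vanish on a neighborhood of $0$ so that the hypothesis $\varphi(t)/t^p\in C[0,R]$ of Theorem \ref{Szego} is trivially satisfied, a point the paper leaves implicit in its choice of trapezoidal functions.
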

\begin{proof}
 $N_I(k) = \tr\left[\1_I(S_{ad\sigma})\right]$ where $\1_I$ is the characteristic function of $I$.
It is easy to construct two sequences of trapezoidal functions $\{\varphi_n\}$ and $\{\psi_n\}$ such that
\[
\forall n \qquad \varphi_n\leq \1_I \leq \psi_n
\]
and such that $\lim_{n\to\infty} \varphi_n = \1_I = \lim_{n\to\infty}\psi_n$ pointwise
except at the endpoints of $I$.  
For each $n$ and for each $k$ we have
\[
\tr (\varphi_n(S)) \leq N(k) \leq \tr (\psi_n(S)).
\]
Now multiply through by $2^{d'/2}\,\left(\frac{\pi}{k}\right)^{d/2}$ and take limits as $k\to\infty$
to get that, for each $n$,
\[
\calF(\varphi_n)\leq \liminf\,2^{d'/2}\,\left(\frac{\pi}{k}\right)^{d/2}N(k)
\leq \limsup\,2^{d'/2}\,\left(\frac{\pi}{k}\right)^{d/2}N(k) \leq \calF(\psi_n)
\]
where we have used the notation \[
\calF(\varphi)  = \int \varphi(s)\, \calD_a(s)\, ds.
\]
By the Lebesgue dominated convergence theorem (recalling that $\calD_a(s)\, ds$
is absolutely continuous)
\[
\lim_{n\to\infty}\calF(\varphi_n) = \calF(\1_I) = \lim_{n\to\infty}\calF(\psi_n),
\]
and the result follows.
\end{proof}

\smallskip
In case $a\equiv 1$
\[
\int_I\calD_1\,ds = \frac{|\Gamma|}{\Gamma(d'/2)}\int_I\, \left(-\log(s)\right)^{\frac{d'}{2}-1}\,
\frac{ds}{s}.
\]
An integration by parts allows one to compute the integral.  The result is as follows:
\begin{corollary}  Assume $\Gamma$ is compact, isotropic or co-isotropic, and $d'>0$.
Let $I = [\lambda, \mu]\subset (0,1]$ and let $N_I(k)$ denote the number of eigenvalues
of $S_{d\sigma}$ in $I$.  Then
\[
\lim_{k\to\infty} 2^{d'/2}\,\left(\frac{\pi}{k}\right)^{d/2}\,N_I(k) = \frac{|\Gamma|}{\Gamma(1+d'/2)}\,
\left[(-\log(\lambda))^{d'/2} -(-\log(\mu))^{d'/2} \right].
\]
\end{corollary}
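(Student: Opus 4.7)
The plan is straightforward: I apply the preceding Weyl-estimate proposition in the case $a\equiv 1$ (the hypothesis that $\Gamma$ be compact is precisely what makes $a\equiv 1$ a compactly supported function on $\Gamma$), and then evaluate the resulting one-variable integral in closed form. By that proposition,
$$\lim_{k\to\infty} 2^{d'/2}\left(\frac{\pi}{k}\right)^{d/2} N_I(k) = \int_\lambda^\mu \calD_1(s)\, ds.$$
Specializing formula (\ref{szegoLimit2}) to $a\equiv 1$ --- in which case $\{w\in\Gamma : a(w)\geq s\}$ is all of $\Gamma$ for every $s\in(0,1]$, and $\log(a(w)/s) = -\log s$ --- gives
$$\calD_1(s) = \frac{|\Gamma|}{\Gamma(d'/2)\, s}\,(-\log s)^{d'/2-1}, \qquad s\in(0,1].$$

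Next I compute the integral. The substitution $u = -\log s$, $du = -\,ds/s$, transforms $[\lambda,\mu]$ into $[-\log\mu, -\log\lambda]$ (note the sign flip) and yields
$$\int_\lambda^\mu \frac{(-\log s)^{d'/2-1}}{s}\, ds = \int_{-\log\mu}^{-\log\lambda} u^{d'/2-1}\, du = \frac{2}{d'}\left[(-\log\lambda)^{d'/2} - (-\log\mu)^{d'/2}\right].$$
The same result is obtained by the integration by parts alluded to before the corollary, via the observation that the integrand is the derivative of $-(2/d')(-\log s)^{d'/2}$. Multiplying by $|\Gamma|/\Gamma(d'/2)$ and using the elementary identity $(d'/2)\,\Gamma(d'/2) = \Gamma(1+d'/2)$ converts the prefactor into $|\Gamma|/\Gamma(1+d'/2)$, reproducing the formula claimed in the corollary.

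There is no serious obstacle: once the Weyl-type proposition and the density formula (\ref{szegoLimit2}) are available, the statement is a purely computational consequence. The only details requiring care are the sign flip in the substitution $u = -\log s$ (which reverses the limits of integration) and the Gamma-function identity used to absorb the factor $2/d'$ into the denominator of the final constant; the hypothesis $d'>0$ is exactly what makes the integral convergent at $s=1$ (i.e. $u=0$) and lets the constant $1/\Gamma(d'/2)$ be finite.
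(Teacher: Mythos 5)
Your proof is correct and follows the same route the paper intends: apply the preceding Weyl proposition with $a\equiv 1$, specialize formula (\ref{szegoLimit2}) to obtain $\calD_1(s) = \frac{|\Gamma|}{\Gamma(d'/2)\,s}(-\log s)^{d'/2-1}$, and compute $\int_\lambda^\mu \calD_1(s)\,ds$ by the substitution $u=-\log s$ (equivalently, the integration by parts the paper alludes to), finishing with the identity $(d'/2)\Gamma(d'/2)=\Gamma(1+d'/2)$. Your computation and sign bookkeeping are correct, and the remark that compactness of $\Gamma$ is what makes $a\equiv 1$ an admissible amplitude is an appropriate observation.
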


\subsubsection{Asymptotics of the Schatten norms}
We will also prove the following result on the Schatten norms:
\begin{theorem}\label{Schatten}
Let $\Gamma$ be isotropic or co-isotropic and let $a$ be smooth 
compactly supported complex-valued function on $\Gamma$. Then for every $0<p<\infty,$ 
\begin{equation*}
\lim_{k\rightarrow\infty}2^{d'/2}\left(\frac{k}{\pi}\right)^{-d/2}  \tr\left((S_{ad\sigma}^*S_{ad\sigma})^{p/2}\right)=\int_\Gamma  \frac{\vert a(w)\vert^p}{p^{d'/2}}  d\sigma(w).
\end{equation*}
In particular 
\begin{equation}
\lim_{k\rightarrow\infty}2^{d'/2p}\left(\frac{k}{\pi}\right)^{-d/2p}\Vert S_{ad\sigma}\Vert_p=
\left(\int_\Gamma  \frac{\vert a(w)\vert^p}{p^{d'/2}}  d\sigma(w)\right)^{1/p},
\end{equation}
where $\Vert S_{ad\sigma}\Vert_p$ is the Schatten $p$-norm of $S_{ad\sigma}$.
\end{theorem}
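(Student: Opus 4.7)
My plan is to handle $p = 2m$ a positive even integer directly via Theorem \ref{ManyOps}, and then show that the singular values of $S_{ad\sigma}$ have the same asymptotic distribution as the eigenvalues of the non-negative self-adjoint operator $S_{|a|d\sigma}$, to which Theorem \ref{Szego} applies directly. First I would compute $\Delta_n$ explicitly in the two cases at hand. For isotropic $\Gamma$, (\ref{usefulFact}) gives $\ker K_w = T_w\Gamma$, so $K_w = 0$ and $r = 0$. For co-isotropic $\Gamma$, $\ker K_w = (T_w\Gamma)^\circ$ has dimension $d' = 2N - d$; the orthogonal complement $W = T_w\Gamma \cap J(T_w\Gamma)$ is $J$-invariant (using the identity $(V^\circ)^{\bot} = JV$), so $K|_W = J|_W$ and all $\lambda_\ell = 1$, giving $r = (d-d')/2$. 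In both cases one obtains the uniform expression $\Delta_n = n^{d'/2}\cdot 2^{(d-d')(n-1)/2}$.

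Next, observing that $S_{ad\sigma}^* = S_{\bar a d\sigma}$ (by integration against the reproducing kernel), for $p = 2m$ the trace $\tr((S^*S)^m)$ unfolds as a $2m$-fold composition $\tr(S_{\bar a d\sigma}S_{ad\sigma}\cdots S_{\bar a d\sigma}S_{ad\sigma})$ whose symbols alternate between $\bar a$ and $a$ with product $|a|^{2m}$. Plugging the rescaling $S = 2^{-d'/2}(\pi/k)^{N-d/2}T$ into (\ref{laMeraPapa}) with $n = 2m$ and using the formula for $\Delta_{2m}$, the powers of $2$ collapse to give
\[
2^{d'/2}(k/\pi)^{-d/2}\tr\bigl((S^*S)^m\bigr) \;=\; \int_\Gamma \frac{|a(w)|^{2m}}{(2m)^{d'/2}}\,d\sigma(w) + O(1/k),
\]
establishing the theorem for $p = 2m$. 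Applying Theorem \ref{ManyOps} identically to the $2m$-fold composition $S_{|a|d\sigma}^{2m}$ (whose product of symbols is also $|a|^{2m}$) yields the same leading asymptotic, so the rescaled spectral measures of $|S_{ad\sigma}|$ and of $S_{|a|d\sigma}$ share the same even moments.

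For general $p > 0$, Theorem \ref{Szego} with $\varphi(t) = t^p$ applied to the non-negative symbol $|a|$ directly gives $2^{d'/2}(k/\pi)^{-d/2}\tr(S_{|a|d\sigma})^p \to \int_\Gamma |a|^p/p^{d'/2}\,d\sigma$, which is exactly the proposed limit. To transfer this to $\tr(S^*S)^{p/2}$, I would pass to the weighted measure $\nu_k := t\,d\tilde\mu_k$, where $\tilde\mu_k$ is the rescaled spectral measure of $S^*S$, supported in $[0, R^2]$ by Theorem \ref{NormEst}. By the previous step, $\nu_k$ has bounded total mass and all moments $\int t^j\,d\nu_k$ converging; by Hausdorff moment uniqueness on $[0, R^2]$, it converges weakly to a definite $\nu$, whose density I would identify via the Laplace-transform change of variable $x = \tfrac12\log(|a|^2/t)$. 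For $p \geq 2$, $t^{p/2-1} \in C[0,R^2]$, and the weak convergence yields $\int t^{p/2}\,d\tilde\mu_k = \int t^{p/2-1}\,d\nu_k \to \int_\Gamma |a|^p/p^{d'/2}\,d\sigma$ as desired.

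The main obstacle is the range $0 < p < 2$, where $t^{p/2-1}$ blows up at the origin and the weak-convergence argument does not apply directly. My fix is to combine the matching of moment limits with $S_{|a|d\sigma}$ (from the second paragraph) with an a priori uniform upper bound on the small-singular-value contribution to $\tr|S_{ad\sigma}|^p$, obtained from the rapid decay of singular values characteristic of Toeplitz operators with smooth compactly supported amplitudes. Once that small-eigenvalue tail is controlled uniformly in $k$, the already-established weak convergence on the complementary interval $[\delta, R^2]$ (where $t^{p/2-1}$ is continuous) completes the argument for all $p > 0$.
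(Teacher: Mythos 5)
Your computation of $\Delta_n$ is correct, and the observation that $S_{ad\sigma}^* = S_{\bar a d\sigma}$ turns $\tr\bigl((S^*S)^m\bigr)$ into the alternating composition $\tr(S_{\bar a d\sigma}S_{ad\sigma}\cdots S_{\bar a d\sigma}S_{ad\sigma})$, to which Theorem \ref{ManyOps} applies directly because each factor has a \emph{smooth} symbol, is exactly the right way to get the even moments. The paper reaches the same moment identity through (\ref{muchosS}) and then proceeds by bounding $\tr\bigl((S^*S)^{r/2}\bigr)$ and using polynomial approximation; your weak-convergence/Hausdorff-moment argument for $p\ge2$ is a reasonable variant. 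But the proposal does not close, for two reasons.

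First, you repeatedly invoke Theorems \ref{ManyOps} and \ref{Szego} with symbol $|a|$. Both of those theorems are stated and proved (by stationary phase, which needs smooth amplitudes) for $a\in C_0^\infty(\Gamma)$, and $|a|=\sqrt{a_1^2+a_2^2}$ need not be even $C^1$ on the zero set of $a$. This can be worked around for the moments, since you never actually need the operator $S_{|a|d\sigma}$, only the alternating composition above; but the explicit appeal in your third paragraph to ``Theorem \ref{Szego} with $\varphi(t)=t^p$ applied to the non-negative symbol $|a|$'' is not justified as written, and the comparison with $S_{|a|d\sigma}$ in the second paragraph is similarly unfounded.

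Second, and this is the essential gap, your treatment of $0<p<2$ rests on an alleged ``rapid decay of singular values characteristic of Toeplitz operators with smooth compactly supported amplitudes.'' There is no such result available here, and none should be expected: the Weyl estimate and the explicit density $\calD_a$ already show the eigenvalues of $S$ accumulate at $0$ with a density that produces exactly order $k^{d/2}$, so the ``tail'' is not small in absolute terms. What one actually needs is that the \emph{rescaled} tail $k^{-d/2}\sum_{\lambda_j<\delta}\lambda_j^p$ tends to $0$ uniformly in $k$ as $\delta\to0$, which is equivalent to a uniform a priori bound on $k^{-d/2}\tr\bigl((S^*S)^{q/2}\bigr)$ for some $q<p$. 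Establishing that bound is the real technical content of the theorem: the paper does it by decomposing $a=(a_1+C)+i(a_2+C)-(1+i)\cdot 1$ into summands with non-negative smooth symbols, invoking the Schatten quasi-norm inequality (\ref{seminorma}), and applying Lemma \ref{MainLemma}, which itself requires a non-trivial Berezin-transform estimate over a tubular neighborhood of $\Gamma$. Without supplying an argument of comparable strength, your fourth paragraph names the lemma that is needed rather than proving it, so the case $0<p<2$ remains open in your proposal.
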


\subsubsection{The limit of the entropy of a mixed state}

As a corollary of the \Sz theorem we can estimate the entropy of the mixed states
mentioned above.
Let us fix $\Gamma\subset\bbR^{2N}$ either an isotropic or co-isotropic submanifold 
with $d'>0$, and $a\in C_0^\infty(\Gamma)$ such that
$a\geq 0$ and $\int_\Gamma a\, d\sigma = 1$.  Then
\[
\rho_a = \left(\frac{\pi}{k}\right)^N \, T_{ad\sigma}
\]
is a mixed state.  Let us denote by $p_1\geq p_2\geq\cdots \geq 0$ the eigenvalues of $\rho_a$ 
listed with multiplicities.  We are interested in the information entropy of $\rho_a$, that is
\[
\bbH(\rho_a) := -\sum_{j=1}^\infty p_j\, \log(p_j).
\]

\medskip
Our \Sz limit theorems are on the spectra of the operators
\[
S_{ad\sigma} = 2^{-d'/2} \left(\frac{\pi}{k}\right)^{N-d/2}\, T_{ad\sigma}
\]
where $d' = 2N-d$ in the co-isotropic case and $d'=d$
in the isotropic case.  In terms of $S_{ad\sigma}$, $\rho_a$ is
\begin{equation}\label{verAqui}
\rho_a = 2^{d'/2}\, \left(\frac{\pi}{k}\right)^{d/2}\, S_{ad\sigma} = C_d\, k^{-d/2}\, S_{ad\sigma},
\end{equation}
where $C_d:= 2^{d'/2}\, \pi^{d/2}.$
If $\mu_j$ are the eigenvalues of $S$ then
$ p_j = C_d\, k^{-d/2}\, \mu_j$, and therefore
\[
\bbH(\rho_a) = -\sum_{j=1}^\infty C_dk^{-d/2}\mu_j\left[\log(C_dk^{-d/2}) + \log(\mu_j)\right].
\]
Taking traces in (\ref{verAqui}) one gets
$C_dk^{-d/2}\, \sum_{j=1}^\infty \mu_j = 1$ and so 
\begin{equation}\label{expresionEntro}
\bbH(\rho_a) = -\log(C_dk^{-d/2}) - C_dk^{-d/2}\,\sum_{j=1}^\infty \mu_j\,\log(\mu_j).
\end{equation}
By the \Sz theorem using the test function $\varphi(s) = s\log(s)$ (which is in our class 
of test functions),
\[
\sum_{j=1}^\infty \mu_j\,\log(\mu_j) = O\left(k^{d/2}\right),
\]
and therefore the second term in (\ref{expresionEntro}) is $O(1)$.  However the first term
is universal (it only depends on $d$).  After a short calculation of constants one can conclude:
\begin{theorem}  If $d'>0$, $a\geq 0$ and $\int_\Gamma a\, d\sigma = 1$
\[
\lim_{k\to\infty} \left[\bbH(\rho_a) +\log(C_dk^{-d/2})\right] = -
\frac{1}{\Gamma(d'/2)}\int_{\Gamma}\left(\int_0^{a(w)}s\log(s)\, 
\log\left(\frac{a(w)}{s}\right)^{\frac{d'}{2}-1}\,
\frac{ds}{s}\right) d\sigma(w).
\]
\end{theorem}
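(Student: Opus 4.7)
The plan is to substitute the test function $\varphi(s) = s\log s$ into Theorem \ref{Szego} and then unwind the identity (\ref{expresionEntro}) that the authors already derived. The calculation on the page reduces the entropy statement, up to the universal additive constant $\log(C_d k^{-d/2})$, to computing the asymptotic behavior of
\[
C_d\,k^{-d/2}\,\sum_{j=1}^\infty \mu_j\log(\mu_j) \;=\; 2^{d'/2}\left(\frac{\pi}{k}\right)^{d/2}\tr\bigl(\varphi(S_{ad\sigma})\bigr),
\]
so the heart of the matter is just an application of Theorem \ref{Szego}.

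First I would check that $\varphi(s) = s\log s$ belongs to the admissible class of test functions. Picking $p = 1/2$, the quotient $\varphi(s)/s^{1/2} = s^{1/2}\log s$ extends continuously to $0$ at the origin (since $s^{1/2}\log s\to 0$ as $s\to 0^+$), so $\varphi/s^{1/2}\in C[0,R]$ for any $R$ containing $\sigma(S_{ad\sigma})$ for all $k$, which exists by the norm estimate Theorem \ref{NormEst}. Since $a\geq 0$ we are within the hypotheses of Theorem \ref{Szego}. Compactness of $S_{ad\sigma}$ together with the Schatten bound of Theorem \ref{Schatten} (applied for $p$ slightly less than $1$) ensures the sum $\sum_j \mu_j|\log \mu_j|$ is finite for each $k$.

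Next I would apply Theorem \ref{Szego} directly with $\varphi(s) = s\log s$ to obtain
\[
\lim_{k\to\infty} 2^{d'/2}\left(\frac{\pi}{k}\right)^{d/2}\,\sum_{j=1}^\infty \mu_j\log(\mu_j)
\;=\; \int_\Gamma \calO_{-d'/2}\bigl(s\log s\bigr)\bigl(a(w)\bigr)\,d\sigma(w),
\]
and, using the explicit form of $\calO_{-d'/2}$ since $d'>0$,
\[
\calO_{-d'/2}(s\log s)(t) \;=\; \frac{1}{\Gamma(d'/2)}\int_0^{t} s\log(s)\,\log(t/s)^{d'/2-1}\,\frac{ds}{s}.
\]
Since $C_d k^{-d/2} = 2^{d'/2}(\pi/k)^{d/2}$, combining this with the identity (\ref{expresionEntro}) gives
\[
\lim_{k\to\infty}\bigl[\bbH(\rho_a)+\log(C_d k^{-d/2})\bigr]
= -\int_\Gamma \calO_{-d'/2}(s\log s)\bigl(a(w)\bigr)\,d\sigma(w),
\]
which is exactly the stated formula.

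There is no real obstacle here, only bookkeeping: the only points requiring care are verifying admissibility of the test function (the $s^{1/2}\log s$ check above) and justifying that the infinite sum $\sum_j \mu_j\log\mu_j$ indeed coincides with $\tr(\varphi(S_{ad\sigma}))$, which follows from compactness of $S_{ad\sigma}$ and the trace-class estimate. Everything else is a direct invocation of the Szegő limit theorem already established.
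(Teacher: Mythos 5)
Your proof is correct and follows essentially the same route as the paper: derive the identity $\bbH(\rho_a)+\log(C_dk^{-d/2}) = -C_dk^{-d/2}\sum_j\mu_j\log\mu_j = -2^{d'/2}(\pi/k)^{d/2}\tr(\varphi(S_{ad\sigma}))$ with $\varphi(s)=s\log s$, check that $\varphi$ is in the admissible class $\calC$, and invoke Theorem \ref{Szego}. Your explicit verification that $\varphi(s)/s^{1/2}$ is continuous at $0$ is a small but worthwhile addition that the paper leaves implicit.
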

This result has the same form as the general relationship between the
differential entropy of a continuous distribution and the entropy of its discretization in bins of size
$C_dk^{-d/2}$.

\smallskip
Incidentally,
we can directly apply Theorem (\ref{Schatten}) to obtain the limit of the trace distance
between two mixed states associated with the same $\Gamma$; it is just the $L^1$ distance:
\begin{equation}\label{}
\lim_{k\to\infty}\norm{\rho_a - \rho_b}_1 = \int_\Gamma \left| a(w)-b(w)\right|\, d\sigma(w).
\end{equation}

\bigskip
\begin{remark}
We finish this subsection with the following general remark.
In all the previous statements,
the only difference between the isotropic and the co-isotropic cases 
is that, in the latter, $d'$ is the codimension of $\Gamma$ instead of its dimension.
This can be interpreted as follows.
Assume $\Gamma$ is co-isotropic.
Then $\Gamma$ is foliated by leaves tangent to the spaces
$T_w\Gamma^\circ$, $w\in\Gamma$.  The dimension of the leaves is the codimension of $\Gamma$, 
and the leaves are isotropic submanifolds of $\bbR^{2N}$.  It is clear from the definition
that in this case $T_{ad\sigma}$ can be thought of (albeit non rigorously) as an integral of singular 
Toeplitz operators, one for each isotropic leaf.  By Theorem \ref{Szego}
the ``crossed terms" between these isotropic operators do not contribute to the asymptotics
of $\tr[\varphi(S)]$.

The fact that the co-isotropic case formally boils down to a sum of isotropic cases
indicates that the construction of  mixed states, as we are considering here, is more
naturally adapted to the isotropic setting.  This is in agreement with the general 
philosophy that single quantum states aren't naturally associated with co-isotropic submanifods.
Rather, if the null foliation of a co-isotropic is fibrating, $\pi:\Gamma\to X$, then $X$ is
symplectic and to $\Gamma$ one ought to associate the equivalent of the quantizaton
of $X$-worth of quantum states (``quantization commutes with reduction").  

\end{remark}

\subsection{Examples}

We present here some examples.  (For examples with $\bbC^N$ replaced with
a compact K\"ahler manifold see \cite{LF}, where the harmonic oscillator is also
treated.)

\subsubsection{The harmonic oscillator} 
The simplest example is $N=1$ and 
$\Gamma = \{ w\in\calC\;;\; |w| = r\}$ parametrized by $w=re^{it}$ so that $d\sigma = rdt$.  
Straightforward calculations show that, with $\psi(z) = f(z) e^{-k|z|^2/2}\in\calB_k$,
\[
T_\Gamma(\psi)(z) = 
 \frac{k}{\pi} \,r\,e^{-k|z|^2/2}\,e^{-kr^2}\int_0^{2\pi} e^{rkze^{-it}}\, f(re^{it})\, dt.
\]
Clearly $T_\Gamma$ must commute with the $S^1$ representation on $\calB_k$
induced by its action on $\bbC$, so it is diagonal on the monomial
basis consisting of
\[
\ket{n} =  z^n\,e^{-k|z|^2/2},\quad n=0, 1, \ldots.
\]
One computes that
\[
T_\Gamma\ket{n} = \lambda_n\,\ket{n} \quad\text{with}\quad
\lambda_n = 2k\, r\frac{r^{2n}k^n}{n!}\,e^{-kr^2},
\]
from where it follows that
\begin{equation}\label{}
\tr(T_\Gamma) = 2k r,
\end{equation}
the length of the circle times $k/\pi$, in agreement with (\ref{trace}).

The associated mixed sate is $\rho_\Gamma := \frac{1}{2k r}\,  T_\Gamma$.
Its eigenvalues are just
\[
p_n = \frac{r^{2n}k^n}{n!}\,e^{-kr^2}.
\]
With respect to $n$, $p_n$ is exactly a Poisson distribution with $\lambda = kr^2$.

To estimate the operator norm of $T_\Gamma$ we find the maximum eigenvalue $
\lambda_{\text{\tiny max}}$ (the mode of the Possion distribution).
For this one considers the quotients
$
\frac{\lambda_n}{\lambda_{n-1}} = \frac{kr^2}{n}.
$
It follows that $\lambda_{\text{\tiny max}}$ corresponds to $n\cong kr^2$, which, in the 
present context, can be seen as a kind of Bohr-Sommerfeld condition.  
In fact if we impose that $k$ be of the form
$k=r^2/n$, $n=1,2\cdots$,  then 
\[
\lambda_{\text{\tiny max}}=  \lambda_{n=kr^2} = 2kr\,\frac{(kr^2)^{kr^2}}{(kr^2)!}\, e^{-kr^2} 
\sim \sqrt{\frac{2k}{\pi}}
\]
by Stirling's formula, showing that (\ref{estima}) is sharp in this case.  
The associated eigenvector $\ket{n}$ concentrates semi-classically
on $\Gamma$.
The normalized operator is 
$S_\Gamma = \sqrt{\frac{\pi}{2k}} T_\Gamma$
and has greatest eigenvalue $\sim 1$, and the Szeg\"o limit theorem, in this case, 
reads
\begin{equation}\label{}
\lim_{k\to\infty}\sqrt{\frac{2\pi}{k}}\,
 \sum_{n=0}^\infty \varphi\left(\sqrt{2\pi k}\,\frac{r^{2n+1} k^n}{n!} e^{-kr^2}\right) =
2r\sqrt{\pi}\int_0^{1}\varphi(s)\, 
\frac{ds}{s\sqrt{-\log(s)}}
\end{equation}
for all functions $\varphi$ such that $\varphi(s)/s^p$ is continuous on $[0,1]$ for some $p>0$.

\bigskip
We can generalize the previous example to a product of $d$ circles in $\bbC^N$:
\begin{equation}\label{}
\Gamma = \left\{ (r_1e^{it_1},\cdots r_d e^{it_d}, 0\cdots, 0)\in\bbC^N\;;\; 
(t_1,\ldots ,t_d)\in [0,2\pi]^d\right\}.
\end{equation}
Once again the monomials
\[
z^n\, e^{-k|z|^2/2},\qquad n = (n_1,\ldots, n_N)\in\bbN^N
\]
are eigenfunctions of $T_\Gamma$.  The eigenvalues are simply the product of the
one--dimensional eigenvalues, namely
\[
\lambda_n = (2k)^d\, \prod_{j=1}^d r_j\frac{r_j^{2n_j}k^{n_j}}{n_j!}\,e^{-kr_j^2} =
\frac{(2k)^d}{n!}\, k^{|n|}\, r^{2|n|+d}\, e^{-k|r|^2},
\]
where $|r|^2 = \sum r_j^2$, $n! = \prod n_j!$, $|n| = \sum n_j$.
Once again estimating the greatest eigenvalue shows that the operator norm of
$T_\Gamma$ is $O(k^{N-d/2})$, showing that in general (\ref{estima}) is sharp.

\subsubsection{A symplectic but non-complex example}
This example shows that $\Delta_n(w)$ does not have to be constant with respect
to $w\in\Gamma$. Let $z=(z_1, z_2)$ be the variable in $\bbC^2$, and let
$z_j = x_j + \sqrt{-1} y_j$.  Let $\Gamma$ be defined by the equations:
\[
\Gamma:\qquad x_2 = \frac 12 x_1^2\quad\text{and}\quad y_2 = 0.
\]
Then $(x_1, y_1)\mapsto \left(x_1, y_1, \frac{1}{2}x_1^2, 0\right)$ is a parametrization
of $\Gamma$, and $\left\{\langle 1, 0, x_1, 0\rangle ,\, \langle 0, 1, 0, 0\rangle\right\}$
is the moving frame associated to it.  Clearly $\Gamma$ is a symplectic submanifold.
An elementary calculation shows that the matrix of $K$ in the parametrization is
\[
\begin{pmatrix}
0 & -\frac{1}{1+x^2}\\
1 & 0
\end{pmatrix},
\]
and therefore $r=1$, $\lambda_1 = (1+x_1^2)^{-1/2}$ and
\[
 \Delta_n = \sum_{j=0}^{\floor{\frac{n-1}{2}}} {n\choose 2j+1}\, \frac{1}{(1+x_1^2)^j}
 \]  
by the identity (\ref{alternatively}).

\bigskip
The paper is organized as follows.  In the next section we compute the covariant symbol of
$T_{ad\sigma}$ and its kernel, and derive some easy consequences.  Theorem \ref{NormEst}
is proved in \S 3.  In \S 4 we establish Theorem \ref{ManyOps}
and the Szeg\"o theorem (Theorem \ref{Szego}), first for polynomials and then for
general test functions $\varphi$.  A key step in this extension is to show that
 for every $p\in (0,1)$
$\tr(S^p_{ad\sigma})$ is  $O(k^{d/2})$, which we do in \S 4.3.
In \S 5 we prove the Theorem on the Schatten norms.
Finally, in \S 6 we consider the case when $\Gamma$ is a Lagrangian submanifold
satisfying the Bohr-Sommerfeld condition and obtain lower bounds for the maximum
eigenvalue of $T_{ad\sigma}$, by using as test functions Lagrangian pure states associated with
$\Gamma$ (see Proposition \ref{LowerBound}).  

\smallskip
Our proofs are direct, based on the explicit formula for the reproducing kernel and the 
method of stationary phase.  It is expected however 
that there is a symbol calculus for a class of operators that includes
the operators treated here, and generalizations.  (For example, one can envision 
forming mixed states by integrating projectors over more general coherent states.)
The symbols will be symplectic spinors, as in \cite{BG} and \cite{GUW}.  Such a symbol calculus
will allow us to deal with many other issues related to these operators, for example propagation 
under a quantum Hamiltonian, as well as the extension of the theory to quantized compact
K\"ahler manifolds.  Since the \Sz projector on such manifolds has the same form asymptotically
as in the Euclidean case, it is clear that the results presented here will take the same 
general form in that setting.

\section{Symbols and kernels}

We keep the notation of the previous section.

\subsection{The covariant symbol}

By definition, the covariant (Wick or Berezin) symbol
of $T_{ad\sigma}$ is the function
\[
\widetilde{a\sigma}_k= \frac{\inner{T_{ad\sigma}(e_z)}{e_z}}{\inner{e_z}{e_z}} = 
\frac{T_{ad\sigma}(e_z)(z)}{\Pi(z,\zbar)}.
\]
A straightforward calculation shows:
\begin{equation}\label{}
\widetilde{a\sigma}_k = \constants\, \int_\Gamma e^{-k|z-w|^2}\, a(w)\, d\sigma (w).
\end{equation}
Note that in particular $\widetilde{a\sigma}_k$ is exponentially small as $k\to\infty$ unless $z\in \Gamma$.
Moreover, the knowledge of $\widetilde{a\sigma}_k$ for every $k$ determines $ad\sigma$. 
In fact the previous expression shows that   $\widetilde{a\sigma}_k$ is the heat evolution  at $t=1/4k$ of the  measure $ad\sigma$ in $\C ^{N}$, and therefore
 \begin{equation*}
  \lim_{k\rightarrow\infty}\widetilde{ad\sigma}_k\,dv=ad\sigma
  \end{equation*} 
  in the $w^{*}$ topology of $C_0(\C ^N)^{*}$.

Also, by the general theory of Berezin (\cite{Be})
\begin{equation}\label{trazaCovariante}
\tr(T_{ad\sigma}) = \constants\int_{\bbC^N}\, \widetilde{a\sigma}_k\, dL(z),
\end{equation}
from which one can easily recover (\ref{trace}).

\medskip
Another general formula due to Berezin is that the trace of the composition of
two operators is the integral of the product of the covariant symbol of one times the
Toeplitz (or anti-Wick or contravariant) symbol of the other.  This leads to:

\begin{lemma}
Let $a,\, b\in C_0^\infty(\Gamma)$.  Then
\begin{equation}\label{trazaDeDosExacta}
\tr\left(T_{ad\sigma}\circ T_{bd\sigma}\right) = \left(\frac{k}{\pi}\right)^{2N}
\iint_{\Gamma\times\Gamma}e^{-k|z-w|^2}\,
a(w)\, b(z)\, d\sigma (w)\,d\sigma(z).
\end{equation}
\end{lemma}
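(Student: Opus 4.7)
The plan is to use the factorization $T_{ad\sigma}=\calR^*M_a\calR$ (where $M_a$ denotes multiplication by $a$ on $L^2(\Gamma)$) implicit in (\ref{tGamma}) and the preceding lemma, together with the cyclic property of the trace. This reduces the problem to taking the trace of an explicit integral operator on $L^2(\Gamma)$, whose Schwartz kernel can be read off directly from (\ref{rstar}).

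Concretely, I would first set $K:=\calR\circ\calR^*:L^2(\Gamma)\to L^2(\Gamma)$, which by (\ref{rstar}) has Schwartz kernel
\[
K(w,u)=\Pi_k(w,\overline{u}),\qquad w,u\in\Gamma.
\]
By cyclicity,
\[
\tr(T_{ad\sigma}\circ T_{bd\sigma})=\tr(\calR^*M_a\calR\calR^*M_b\calR)=\tr(M_aKM_bK),
\]
and composing kernels on $L^2(\Gamma)$ shows that the kernel of $M_aKM_bK$ at $(w,v)\in\Gamma\times\Gamma$ is $a(w)\int_\Gamma\Pi_k(w,\overline{u})\,b(u)\,\Pi_k(u,\overline{v})\,d\sigma(u)$. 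Integrating on the diagonal against $d\sigma(w)$ therefore gives
\[
\tr(T_{ad\sigma}\circ T_{bd\sigma})=\iint_{\Gamma\times\Gamma}a(w)\,b(u)\,\Pi_k(w,\overline{u})\,\Pi_k(u,\overline{w})\,d\sigma(u)\,d\sigma(w).
\]

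The last step is to invoke (\ref{repro}) and the Hermitian symmetry $\overline{\Pi_k(w,\overline{u})}=\Pi_k(u,\overline{w})$ (immediate from the explicit formula, since $\omega$ is skew) to rewrite $\Pi_k(w,\overline{u})\,\Pi_k(u,\overline{w})=|\Pi_k(w,\overline{u})|^2=\left(k/\pi\right)^{2N}e^{-k|w-u|^2}$, which, after renaming $u\to z$, yields the claimed identity.

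The only real obstacle is justifying trace-class-ness of the composition and the use of Fubini. This is routine: since $a$ and $b$ are smooth with compact support and $\Pi_k$ is smooth and bounded on compact sets, $T_{ad\sigma}$ and $T_{bd\sigma}$ are Hilbert--Schmidt (the right-hand side specialized to $b=\overline{a}$ is in fact the squared Hilbert--Schmidt norm of $T_{ad\sigma}$), so their composition is trace class and all integrals above converge absolutely.
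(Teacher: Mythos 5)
Your proof is correct, but it takes a genuinely different route from the paper. The paper obtains the lemma by invoking a general formula of Berezin: for any two operators $A$, $B$ on $\calB_k$, $\tr(AB)$ equals $(k/\pi)^N$ times the integral of the covariant symbol of $A$ against the contravariant (Toeplitz) symbol of $B$. Since the covariant symbol $\widetilde{a\sigma}_k(z)=(k/\pi)^N\int_\Gamma e^{-k|z-w|^2}a(w)\,d\sigma(w)$ was already computed, and the contravariant symbol of $T_{bd\sigma}$ is the measure $b\,d\sigma$, the identity drops out immediately. You instead factor $T_{ad\sigma}=\calR^*M_a\calR$ through $L^2(\Gamma)$, cycle the trace to obtain $\tr(M_aKM_bK)$ with $K=\calR\calR^*$, and evaluate the resulting kernel on the diagonal; the key algebraic step is the Hermitian symmetry $\overline{\Pi_k(w,\bar u)}=\Pi_k(u,\bar w)$, which turns the product of two Bergman kernels into $|\Pi_k(w,\bar u)|^2=(k/\pi)^{2N}e^{-k|w-u|^2}$. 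The two approaches are of course close in spirit — your argument is essentially a self-contained derivation of Berezin's trace formula in this special case — but yours is more elementary in that it does not appeal to that formula as a black box, at the cost of having to justify cyclicity and the kernel-diagonal trace formula directly (which you correctly flag: $M_{\sqrt{|a|}}\calR$ is Hilbert--Schmidt because $\tr(\calR^*M_{|a|}\calR)=(k/\pi)^N\int_\Gamma |a|\,d\sigma<\infty$, so all the compositions in play are trace class). The paper's route integrates better with the rest of the section, where the covariant symbol and the Wick-kernel formalism are also used to handle the $n$-fold composition.
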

Below we generalize (\ref{trazaDeDosExacta}) to a composition of $n$ operators.

\medskip
It is straightforward to estimate the trace of an ordinary Berezin-Toeplitz operator
composed with $T_{ad\sigma}$.  Let $H:\bbC^N\to\bbC$ be smooth and, say, of polynomial
growth as well as all its derivatives.  Define
\[
\forall \psi\in \calB_k\qquad T_H(\psi) = \Pi_k(H\psi).
\]
Then we have
\[
\tr (T_H\circ T_{ad\sigma}) = \left(\frac{k}{\pi}\right)^{2N}
\int_{\bbC^N}\int_\Gamma e^{-k|z-w|^2}
H(z)\, a(w)\, d\sigma(w) \, dL(z),
\]
which, by the method of stationary phase, implies the following:
\begin{proposition}  As $k\to\infty$
\[
 \tr (T_H\circ T_{ad\sigma}) =  \constants \int_\Gamma H(w)\, a(w)\, d\sigma(w) + 
 O(k^{N-1})
\]
(in fact there is a full asymptotic expansion of this trace).
\end{proposition}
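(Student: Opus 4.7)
The plan is to apply Laplace's method (stationary phase with real, non-oscillatory phase) directly to the exact formula
\[
\tr(T_H\circ T_{ad\sigma}) = \Bigl(\frac{k}{\pi}\Bigr)^{2N}\int_\Gamma\Bigl(\int_{\bbC^N} e^{-k|z-w|^2}H(z)\,dL(z)\Bigr) a(w)\,d\sigma(w)
\]
displayed just above the proposition. For each fixed $w$, the real-valued phase $z\mapsto |z-w|^2$ has a unique global minimum at $z=w$ with constant non-degenerate Hessian $2\,\mathrm{Id}_{2N}$ on $\bbR^{2N}\cong\bbC^N$. Since $a$ has compact support in $\Gamma$, the outer variable $w$ ranges over a fixed compact set $K=\mathrm{supp}(a)$ throughout, which is what will make all remainders uniform.

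First I would split the inner integral into $B_\delta(w)$ and its complement for a small but fixed $\delta>0$. On the complement, the bound $e^{-k|z-w|^2}\leq e^{-k\delta^2/2}\,e^{-|z-w|^2/2}$ combined with the polynomial growth of $H$ shows that the tail contribution is $O(e^{-ck})$ for some $c>0$, uniformly in $w\in K$, and hence negligible in the expansion. On $B_\delta(w)$, I would Taylor-expand $H$ about $w$ to arbitrary order and rescale $z=w+u/\sqrt{k}$; the measure becomes $k^{-N}dL(u)$, the exponential becomes $e^{-|u|^2}$, the domain becomes $B_{\delta\sqrt{k}}(0)$ (which may be replaced by all of $\bbR^{2N}$ at the cost of another exponentially small error), and the expansion terms become Gaussian moment integrals $\int_{\bbR^{2N}}u^\alpha e^{-|u|^2}\,dL(u)$, which vanish for $|\alpha|$ odd and are explicit constants otherwise. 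The $\alpha=0$ term gives
\[
\int_{\bbC^N}e^{-k|z-w|^2}H(z)\,dL(z) = \Bigl(\frac{\pi}{k}\Bigr)^N H(w) + O(k^{-N-1}),
\]
uniformly for $w\in K$, with a full expansion in powers of $k^{-1}$ whose coefficients are universal linear combinations of derivatives of $H$ at $w$.

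Finally I would multiply by $(k/\pi)^{2N}$ and integrate against $a\,d\sigma$ over $K$; uniformity of the remainder in $w$ lets me pass the $O(k^{-N-1})$ through the outer integral against the finite measure $a\,d\sigma$, yielding
\[
\tr(T_H\circ T_{ad\sigma}) = \Bigl(\frac{k}{\pi}\Bigr)^N\int_\Gamma H(w)\,a(w)\,d\sigma(w) + O(k^{N-1}),
\]
and termwise integration of the full Gaussian expansion gives the parenthetical claim that a complete asymptotic expansion exists. The only point requiring any care is the uniformity of the Taylor remainder in $w\in K$; this is handled by the standing hypothesis that $H$ together with all its derivatives has polynomial growth, so that on the fixed neighborhood $B_\delta(K)$ each derivative of $H$ is bounded. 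There is no genuine obstacle, and the same argument furnishes the full asymptotic expansion stated parenthetically.
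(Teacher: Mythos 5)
Your proof is correct and is essentially the paper's own argument carried out in full: the paper simply asserts that the proposition follows ``by the method of stationary phase'' from the displayed exact double-integral formula, and your Laplace-method computation (localization of the $z$-integral near $z=w$, exponential tail bound using the polynomial growth of $H$, rescaling $z=w+u/\sqrt{k}$ to produce Gaussian moments, and uniformity of the remainder over the compact support of $a$ so that it can be integrated against $a\,d\sigma$) is precisely the calculation being invoked. Nothing to add.
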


\subsection{The Wick kernel}
Let $A:\calB_k\to\calB_k$ be any bounded
operator.  By the reproducing property of the coherent
states
$
A(\psi) = \int_{\bbC^N}\psi(w)\,A(e_w)\, dw,
$
or
\begin{equation}\label{}
A(\psi)(z) = \int_{\bbC^N} \psi(w)\,\inner{A(e_w)}{e_z}\, dw.
\end{equation}
This shows that
\begin{equation}\label{sKernel}
\calK_A (z,w) := \inner{A(e_w)}{e_z}
\end{equation}
acts as the Schwartz kernel for $A:\calB\to\calB$. We will refer to $\calK_A$  as the
(Wick or Berezin) kernel of $A$.  It is easy to check that if $B:\calB_k\to\calB_k$
is another operator, then
\begin{equation}\label{compoDos}
\calK_{A\circ B}(z,w) = \int_{\bbC^N} \calK_A(z,\zeta)\,\calK_B(\zeta,w)\, d\zeta.
\end{equation}
Furthermore, (\ref{trazaCovariante}) is equivalent to
\begin{equation}\label{trazaKernel}
\tr(A) = \int_{\bbC^N}\, \calK_A(z,z)\, dL(z).
\end{equation}

The following is immediate:
\begin{lemma}\label{TheKernel}
Let $\Gamma\subset\bbC^n$ be a  submanifold and $d\sigma$ a fixed positive 
measure on it.  For  each $a\in C_0^\infty(\Gamma)$,
the Berezin kernel (\ref{sKernel}) of $T_{ad\sigma}$ is
\begin{equation}\label{}
\calK_a(z,w) = \int_\Gamma \Pi(z,\bar\zeta)\,\Pi(\zeta, \wbar)\, a(\zeta)\, d\sigma(\zeta) =
\end{equation}
\[
= \left(\frac{k}{\pi}\right)^{2N}\int_\Gamma e^{ik\left[ \omega(\zeta, w-z) + 
i\left( |z-\zeta|^2 + |\zeta-w|^2\right)/2\right]} a(\zeta)\, d\sigma(\zeta).
\]
\end{lemma}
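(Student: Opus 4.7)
The plan is to unwind the two definitions and then substitute the explicit Gaussian form of the reproducing kernel.

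First I would use the defining formula $\calK_a(z,w) = \inner{T_{ad\sigma}(e_w)}{e_z}$ together with the basic reproducing identity $\inner{\psi}{e_z} = \psi(z)$ (valid for every $\psi\in\calB_k$), to rewrite $\calK_a(z,w) = T_{ad\sigma}(e_w)(z)$. Then I would apply the integral formula defining $T_{ad\sigma}$ to the coherent state $e_w$, and use $e_w(\zeta) = \Pi_k(\zeta,\wbar)$ to obtain
\[
\calK_a(z,w) = \int_\Gamma \Pi_k(z,\bar\zeta)\,\Pi_k(\zeta,\wbar)\,a(\zeta)\,d\sigma(\zeta),
\]
which is the first equality.

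Next I would substitute the closed form (\ref{repro}), which gives
\[
\Pi_k(z,\bar\zeta)\,\Pi_k(\zeta,\wbar) = \left(\frac{k}{\pi}\right)^{2N} e^{-k(|z-\zeta|^2+|\zeta-w|^2)/2}\,e^{ik[\omega(z,\zeta)+\omega(\zeta,w)]}.
\]
The remaining task is to simplify the phase. By bilinearity and the antisymmetry of $\omega$,
\[
\omega(z,\zeta)+\omega(\zeta,w) = \omega(z,\zeta)-\omega(w,\zeta) = \omega(z-w,\zeta) = \omega(\zeta,w-z),
\]
which reproduces the phase factor stated in the lemma. Packaging the real and imaginary parts of the exponent into the single bracketed expression $\omega(\zeta,w-z) + i(|z-\zeta|^2+|\zeta-w|^2)/2$ yields the second formula.

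There is essentially no obstacle; the only place where one needs to be careful is tracking signs in the symplectic phase. Both equalities are direct consequences of the reproducing property and the explicit Gaussian form of $\Pi_k$.
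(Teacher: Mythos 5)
Your proposal is correct and fills in exactly the unwinding that the paper itself labels as ``immediate.'' The chain of identities — using $\calK_a(z,w)=\inner{T_{ad\sigma}(e_w)}{e_z}=T_{ad\sigma}(e_w)(z)$ via the reproducing property, substituting $e_w(\zeta)=\Pi_k(\zeta,\wbar)$ into the defining integral of $T_{ad\sigma}$, and then simplifying the phase with bilinearity and antisymmetry of $\omega$ — is the intended argument, and the sign-tracking in $\omega(z,\zeta)+\omega(\zeta,w)=\omega(\zeta,w-z)$ is right.
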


For future use we record here a formula for the trace of a composition of $n$
singular Toeplitz operators associated with the same $\Gamma\subset\bbC^N$:

\begin{proposition} For $n\geq 2$ and smooth functions $a_j\in C_0^\infty(\Gamma)$,
$j=1,2,\ldots n$,
\begin{equation}\label{trazaDeMuchos}
\tr(T_{a_1d\sigma}\circ\cdots \circ T_{a_nd\sigma})=\left(\frac{k}{\pi}\right)^{Nn} \int_{\Gamma^n}\left(e^{ik\Phi (\zeta_1\cdots,\zeta_n) }\prod_{i=1}^n a_i(\zeta_i)\right) d\sigma(\zeta_1)\cdots \sigma(\zeta_n),
\end{equation}
where 
\begin{equation}\label{}
\Phi(\zeta_1,\cdots,\zeta_n)=\frac{i}{2}\left(\vert \zeta_1-\zeta_2\vert^2+\cdots+\vert
\zeta_{n}-\zeta_1\vert^2\right)
+ \omega(\zeta_1,\zeta_{2})+\cdots +\omega(\zeta_{n},\zeta_{1})
\end{equation}
and $\omega$ is the symplectic form $\omega(z,w)=\frac{1}{2i}(z\barw -w\overline{z})$.
\end{proposition}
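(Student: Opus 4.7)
The plan is to reduce the trace to a cyclic chain integral using the Wick kernel machinery already set up in this section. First I would iterate the composition formula (\ref{compoDos}) to express $\calK_{T_{a_1 d\sigma}\circ\cdots\circ T_{a_n d\sigma}}(z,w)$ as an $(n-1)$-fold Lebesgue integral over $\bbC^N$ of a product of the individual Berezin kernels $\calK_{a_j}(\eta_{j-1},\eta_j)$. Substituting the explicit expression $\calK_{a_j}(\eta_{j-1},\eta_j)=\int_\Gamma \Pi_k(\eta_{j-1},\bar\zeta_j)\Pi_k(\zeta_j,\bar\eta_j)\,a_j(\zeta_j)\,d\sigma(\zeta_j)$ from Lemma \ref{TheKernel} and applying Fubini yields a combined integral over $\Gamma^n\times (\bbC^N)^{n-1}$ with an alternating chain of Bergman kernels.

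Next I would collapse all of the intermediate $\eta_j$-integrations using the reproducing property $\int_{\bbC^N}\Pi_k(\zeta,\bar\eta)\Pi_k(\eta,\bar\zeta')\,dL(\eta)=\Pi_k(\zeta,\bar\zeta')$. This eliminates every $\eta_j$ and leaves the clean chain $\Pi_k(z,\bar\zeta_1)\Pi_k(\zeta_1,\bar\zeta_2)\cdots\Pi_k(\zeta_{n-1},\bar\zeta_n)\Pi_k(\zeta_n,\bar w)$. Applying the trace formula (\ref{trazaKernel}) and using the reproducing property one last time to close the cycle via $\int_{\bbC^N}\Pi_k(\zeta_n,\bar z)\Pi_k(z,\bar\zeta_1)\,dL(z)=\Pi_k(\zeta_n,\bar\zeta_1)$ reduces the trace to $\int_{\Gamma^n}\prod_{j=1}^n \Pi_k(\zeta_j,\bar\zeta_{j+1})\,\prod_j a_j(\zeta_j)\,d\sigma(\zeta_j)$ with the cyclic convention $\zeta_{n+1}=\zeta_1$.

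Finally, substituting the explicit formula $\Pi_k(z,\bar w)=(k/\pi)^N e^{-k|z-w|^2/2}e^{ik\omega(z,w)}$ from (\ref{repro}) into each of the $n$ factors produces the prefactor $(k/\pi)^{Nn}$ and an exponent that is readily identified with $ik\Phi(\zeta_1,\ldots,\zeta_n)$ for the $\Phi$ stated in the proposition, using the cyclic sum structure of both the $|\zeta_j-\zeta_{j+1}|^2$ and $\omega(\zeta_j,\zeta_{j+1})$ terms.

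There is no genuine obstacle here: the entire argument is algebraic once the Wick kernel calculus of Section \ref{TheKernel} and the reproducing property are in hand. The only point requiring a small justification is the Fubini interchange of the $\Gamma$-integrations and the collapsing $\bbC^N$-integrations, which is immediate since each $a_j$ is compactly supported, the Bergman kernels restricted to these supports are bounded, and the Gaussian decay of $\Pi_k$ renders every integrand in sight absolutely integrable.
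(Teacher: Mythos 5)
Your proposal is correct and takes essentially the same route as the paper's proof: iterate the composition formula (\ref{compoDos}), collapse the intermediate $\bbC^N$-integrations via the reproducing property to arrive at (\ref{compoN}), close the cycle with the trace formula (\ref{trazaKernel}), and substitute the explicit form of $\Pi_k$ from (\ref{repro}) to read off $\Phi$. The only difference is one of exposition — you make explicit the $\eta_j$-collapse that the paper compresses into the phrase ``by induction on (\ref{compoDos})'' — and your remark about absolute integrability (compact support of the $a_j$ together with Gaussian decay of $\Pi_k$) is the right justification for the Fubini steps.
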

\begin{proof}
Let us denote
by $\calK$ the Wick
kernel of the composition $T_{a_1d\sigma}\circ\cdots \circ T_{a_nd\sigma}$.
Then by induction on (\ref{compoDos}) one can prove that
\begin{equation}\label{compoN}
\calK(z,\barw)=\int_{\Gamma^n}\Pi (z,\overline{\zeta_1})
\prod_{i=1}^{n-1}\Pi (\zeta_i,\overline{\zeta_{i+1}})\, \Pi (\zeta_n,\overline{w})\, 
a_1(\zeta_1)\, d\sigma(\zeta_1)\cdots a_n(\zeta_n)\,d\sigma(\zeta_n).
\end{equation}
The desired trace, (\ref{trazaDeMuchos}), is the integral $\int_{\bbC^N}\calK(z,\zbar)\, dL(z)$.
Using the reproducing property 
\[
\int_{\bbC^N} \Pi(z,\overline\zeta)\,\Pi(\zeta,\zbar)\,dL(z) =
\Pi(\zeta,\overline\zeta)
\]
one obtains
\begin{equation}\label{}
\int_{\bbC^N}\calK(z,\zbar)\, dL(z) = 
\int_{\Gamma^n}\prod_{i \, mod\,n}\Pi(\zeta_i,\overline{\zeta_{i+1}})
\prod_{1=1}^{n}a_i(\zeta_i)\,d\sigma(\zeta_1)\cdots d\sigma(\zeta_n), 
\end{equation}
and the result follows.
\end{proof}

\section{Proof of the norm estimate}
\renewcommand{\S}{\mathbb{S}}
We now prove Theorem \ref{NormEst}.  We need the following:
 \begin{lemma}
 Let $f$ be holomorphic in a neighborhood of $B(a,r)\subset\C^N$. 
 Then
 \begin{equation*}
 \vert f(a)\vert^2e^{-k\vert a\vert^2}\leq \frac{k^N}{\gamma(kr^2)}\int_{B(a,r)}\vert f(z)\vert^2 e^{-k\vert z\vert^2}dv(z),
 \end{equation*}
 where $\gamma(s)=\frac{\Theta_N}{2}\int_0^{s}t^{N-1}e^{-t}dt$ and $\Theta_N$ is the surface area of the unit sphere in $\C^N$. 
 \end{lemma}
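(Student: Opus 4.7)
The plan is to reduce to the centered estimate and then combine the sub-mean-value property of $|f|^2$ on spheres with a weighted integration in the radial variable that produces exactly the incomplete gamma integral $\gamma(kr^2)$.

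First I would eliminate the point $a$ by a holomorphic gauge transformation. Set
\[
g(w) := f(a+w)\, e^{-k|a|^2/2 - k w\bar a},
\]
which is holomorphic in a neighborhood of $B(0,r)$ because $w\mapsto e^{-kw\bar a}$ is holomorphic. A direct computation using $|a+w|^2 = |a|^2 + 2\,\mathrm{Re}(w\bar a) + |w|^2$ shows that
\[
|g(0)|^2 = |f(a)|^2 e^{-k|a|^2} \et |g(w)|^2 e^{-k|w|^2} = |f(a+w)|^2 e^{-k|a+w|^2}.
\]
Hence, after the change of variables $z = a+w$, the claim is equivalent to
\[
|g(0)|^2 \leq \frac{k^N}{\gamma(kr^2)} \int_{B(0,r)} |g(w)|^2 e^{-k|w|^2}\, dv(w).
\]

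Next I would use that $|g|^2$ is plurisubharmonic, hence subharmonic on $B(0,r)\subset\bbR^{2N}$. The spherical sub-mean-value inequality gives, for every $0<s\leq r$,
\[
|g(0)|^2 \leq \frac{1}{\Theta_N s^{2N-1}}\int_{\partial B(0,s)} |g(w)|^2\, d\sigma_s(w),
\]
where $\Theta_N s^{2N-1}$ is the surface area of the sphere of radius $s$ in $\bbC^N\cong\bbR^{2N}$. Multiplying through by $\Theta_N s^{2N-1} e^{-ks^2}$ and integrating in $s$ from $0$ to $r$, the right-hand side becomes $\int_{B(0,r)}|g(w)|^2 e^{-k|w|^2}\, dv(w)$ by spherical coordinates, while the left-hand side is $|g(0)|^2$ times the radial weight $\int_0^r \Theta_N s^{2N-1} e^{-ks^2}\,ds$.

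Finally, the substitution $t = ks^2$ yields
\[
\int_0^r \Theta_N s^{2N-1} e^{-ks^2}\,ds = \frac{\Theta_N}{2k^N}\int_0^{kr^2} t^{N-1} e^{-t}\,dt = \frac{\gamma(kr^2)}{k^N},
\]
and rearranging gives the desired inequality. The only delicate step is the bookkeeping in the first paragraph (choosing the correct holomorphic factor so that the weight $e^{-k|z|^2}$ converts into $e^{-k|w|^2}$ after translation), but this is elementary once the identity $|a+w|^2 - |w|^2 = |a|^2 + 2\,\mathrm{Re}(w\bar a)$ is exploited; the remaining steps are routine.
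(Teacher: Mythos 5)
Your proof is correct and is essentially the paper's argument, just organized in the opposite direction: you state the spherical sub-mean-value inequality for the gauged function $g$ first and then integrate it against the weight $\Theta_N s^{2N-1}e^{-ks^2}\,ds$, whereas the paper starts from the integral $\int_{B(a,r)}|f|^2e^{-k|z|^2}\,dv$, passes to spherical coordinates, and applies the same sub-mean-value bound sphere by sphere. Both rely on the identical holomorphic gauge factor (your $e^{-k|a|^2/2-kw\bar a}$ versus the paper's $e^{k\rho\zeta'\cdot\bar a}$, a minor typo aside) and on the substitution $t=ks^2$ producing $\gamma(kr^2)/k^N$.
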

 \begin{proof}
 Let $\S$ be the unit sphere in $\C^N$, $\tau$ the Lebesgue measure in $\S$ so that 
 $\Theta_N=\tau(\S)$.
Then the sub-harmonicty of the function of $w$, 
 \begin{equation*}
 \vert f(a+\rho w) e^{\rho w\cdot \overline{a}}\vert^2,
 \end{equation*}
 for $a$ and $\rho$ fixed implies that
 \begin{align*}
 \int_{B(a,r)}\vert f(z)\vert^2 e^{-k\vert z\vert^2}dv(z)=&\int_{B(0,r)}\vert f(a+\zeta) 
 e^{k\zeta\cdot \overline{a}}\vert^2 e^{-k\vert \zeta\vert^2}dv(\zeta)\, e^{-k\vert a\vert^2}\\
 =&\int_0^r\int_{\S}\vert f(a+\rho\zeta') e^{k\rho\zeta'\cdot \overline{a}}\vert^2 d\tau(\zeta')\ e^{-k\rho^2}\rho^{2N-1}\, e^{-k\vert a\vert^2}\,d\rho\\
 \geq &\Theta_N\, \vert f(a)\vert^2\,e^{-k\vert a\vert^2}\int_0^r \rho^{2N-1}e^{-k\rho^2}d\rho\\
  =& \frac{\Theta_N}{2k^N}\,\vert f(a)\vert^2\,e^{-k\vert a\vert^2}\int_0^{kr^2}t^{N-1}e^{-t}dt.
 \end{align*}
 \end{proof}
 \begin{proof} (of Theorem (\ref{NormEst}))
Assume without loss of generality that $a\geq 0$. Let $r>0$ and  and  $\lbrace z_n\rbrace$ a numbering of the lattice $r\mathbb{Z}^{2N}$ in $\C^N$. For every $\lambda>0$, there exists a number $L\in\N$ such that no more than $L$ balls $B(z_n, \lambda r)$ intersect for any $n$. The constant $L$ is independent of $r$.  
Let $d\mu=ad\sigma$. Notice that since $\Gamma$ is smooth and $a\in L^{\infty}$  then $$\mu(B(z,r))\leq M\Vert a\Vert_\infty r^d.$$
Since $\C\subset\cup_n B(z_n,2Nr)$ 
(the diameter of a cube in $\R^{2N}$ of side $r$ is $\sqrt{2N}r$),
then if we let $L$ correspond to $\beta=(2N+1)r$ in the argument above we have,
for $\psi=f(z)e^{-k\vert z\vert^2}\in \mathcal{B}_k$,
\begin{align*}
\langle T_{ad\sigma}\psi,\psi\rangle&=
\int_{\C}\vert f(z)\vert^2 e^{-k\vert z\vert^2}d\mu(z)\\
&\leq \sum_n \int_{B(z_n,2Nr)}\vert f(z)\vert^2 e^{-k\vert z\vert^2}d\mu(z)\\
&\leq\frac{k^N}{\gamma(kr^2)}\, \sum_n \int_{B(z_n,2Nr)} \left(
\int_{B(z,r)}\vert f(u)\vert^2 e^{-k\vert u\vert^2}dv(u)\right)d\mu(z)\\
&\leq \frac{k^N}{\gamma(kr^2)}\, \sum_n \int_{B(z_n,2Nr)} \int_{B(z_n, (2N+1)r)}\vert f(u)\vert^2 e^{-k\vert u\vert^2}dv(u)\, d\mu(z)\\
&=\frac{k^N}{\gamma(kr^2)}\, \sum_n \mu(B(z_n,2Nr))\int_{B(z_n, (2N+1)r)}\vert f(u)\vert^2 e^{-k\vert u\vert^2}dv(u)\\
&\leq \frac{k^NM(2Nr)^d\Vert a\Vert_\infty}{\gamma(kr^2)}\,\sum_n \int_{B(z_n, (2N+1)r)}\vert f(u)\vert^2 e^{-k\vert u\vert^2}dv(u)\\
&\leq \frac{k^NLM(2Nr)^d\Vert a\Vert_\infty}{\gamma(kr^2)} \int_{\C}\vert f(u)\vert^2 e^{-k\vert u\vert^2}dv(u).
\end{align*}
If we choose $kr^2=1$ we obtain that for a constant $C>0$
\begin{equation*}
\int_{\C}\vert f(z)\vert^2 e^{-k\vert z\vert^2}d\mu(z)\leq C\Vert a\Vert_\infty  k^{N-d/2}\int_{\C}\vert f(u)\vert^2 e^{-k\vert u\vert^2}dv(u), 
\end{equation*}
hence
\begin{equation*}
\langle T_{ad\sigma}\psi,\psi\rangle\leq C\Vert a\Vert_\infty\,  k^{N-d/2}\,\Vert \psi\Vert^2_{\calB_k} 
\end{equation*}for every $\psi\in \calB_k$ and the proposition follows.
\end{proof}

\section{Proof of the \Sz limit theorem}

The proof of Theorem \ref{Szego} begins with the proof of Theorem \ref{ManyOps}.
After that we show that the traces of the $S$ operators are bounded, and a final argument
concludes the proof.

\subsection{Proof of Theorem \ref{ManyOps}}

 The starting point of the proof
 is the expression (\ref{trazaDeMuchos}) for the trace of $\Upsilon$,
 which we recall for convenience:
 \[
 \tag{\ref{trazaDeMuchos}}
\tr(T_{a_1d\sigma}\cdots T_{a_nd\sigma})=\left(\frac{k}{\pi}\right)^{Nn} 
\int_{\Gamma^n}\left(e^{ik\Phi (\zeta_1\cdots,\zeta_n) }\prod_{i=1}^n a_i(\zeta_i)\right) 
d\sigma(\zeta_1)\cdots \sigma(\zeta_n),
\]
where 
\[
\Phi(\zeta_1,\cdots,\zeta_n)=\frac{i}{2}\left(\vert \zeta_1-\zeta_2\vert^2+\cdots+\vert
\zeta_{n}-\zeta_1\vert^2\right)
+ \omega(\zeta_1,\zeta_{2})+\cdots +\omega(\zeta_{n},\zeta_{1}).
\]
We will estimate this trace using the method of stationary
 phase.  
 
 \medskip
Note first that the 
imaginary part of $\Phi$ is non-negative and is zero precisely on the diagonal
\[
\Gamma^\Delta := \{(\zeta_1,\ldots , \zeta_n)\in\Gamma^n\;;\; 
\zeta_1= \zeta_2 = \cdots = \zeta_n\}.
\]
For this reason one can easily show that the integrand of (\ref{trazaDeMuchos}),
integrated over the complement of any neighborhood of $\Gamma^\Delta$,
is exponentially decreasing.  Therefore, asymptotically to all polynomial orders we can
restrict our attention to a small neighborhood of $\Gamma^\Delta$.

In addition we will  show that
\[
\tag{$\ast$} \Gamma^\Delta\ \text{is a non-degenerate manifold of critical points of } \Phi.
\]
In particular, in a sufficiently small neighborhood of $\Gamma^\Delta$ any
critical point of $\Phi$ is in $\Gamma^\Delta$.  

 \bigskip
 \newcommand{\g}{\vec{\gamma}}
 \smallskip
 Let $\gamma : B(0;1)\subset \R ^d\rightarrow \Gamma$ be a parametrization of an 
 open set $\calU\subset\Gamma$.  
 The notation
 \begin{equation}\label{}
\g_j :=\frac{\partial\gamma}{\partial t_j}\in\bbR^{2N}
\end{equation}
will be useful in the proof.
 Let  $G(t)=(g_{ij}(t))$ be the metric of $\Gamma$, that is 
 $ g_{ij}(t)=\g_i (t)\cdot \g_{j}(t)$, and let $d\mu(t)dt=\sqrt{\det(G)}\,dt$ denote
 the volume element on $\Gamma$.
 
 Let $t,\, s_1,\, \cdots \,s_{n-1}$ be variables in $\bbR^d$
 so that, for example $ s_i = (u^i_1,\ldots , u^i_d)\in\bbR^d$,
 and define $\Xi: B(0,\epsilon)^n\to (\bbR^{2N})^n$ by
 \begin{equation}\label{}
 \Xi(t,\, s_1,\, \cdots \,s_{n-1}) = 
 (\gamma(t),\gamma(t+s_1),\cdots ,\gamma(t+s_{n-1})).
\end{equation}
Choosing $\epsilon$ small enough, this 
is a parametrization of an open set $\calV\subset\Gamma^n$ intersecting the diagonal
in $\calU^\Delta$.  This intersection corresponds to $s_1 = \cdots = s_{n-1} = 0$.
 
 \renewcommand{\s}{\mathbf{s}}
 \smallskip
Let $\chi\in C_0^\infty(\calV)$.  We begin by calculating the asymptotic
expansion as $k\to\infty$ of
\begin{equation}\label{carta} 
 I_\chi(k)=\left(\frac{k}{\pi}\right)^{Nn}\int_{\calV} e^{ik\Phi(\zeta_1,\cdots,\zeta_n)} 
\ \prod_{j=1}^n a_j(\zeta_j)\ \chi(\zeta_1,\cdots,\zeta_n)\ d\sigma(\zeta_1)\cdots d\sigma(\zeta_n),
\end{equation}
computing in the parametrization $\Xi$.  We will 
do stationary phase with respect to the $\s = (s_1,\ldots, s_{n-1})$ 
variables for each $t$, and then integrate over $t$.

Let
\begin{equation*}
\psi(t,\s)= \Phi\circ\Xi(t,\s) = \Phi(\gamma(t), \gamma(t+s_1),\gamma(t+s_2),\cdots, \gamma(t+s_{n-1}))
 \end{equation*} 
be the phase in coordinates, and write
 \begin{equation*}
 \psi(t,s)=i\psi_1(t,s) +\psi_2(t,s)
 \end{equation*}
where
\begin{equation}\label{}
\psi_1(t,\mathbf{s})=\frac{1}{2}\left| \gamma(t)-\gamma(t+s_1)\right|^2
+\frac{1}{2}\sum_{i=1}^{n-2}\left|\gamma(t+s_i)-\gamma(t+s_{i+1})
\right|^{2}+\frac{1}{2}\left| \gamma(t+s_{n-1})-\gamma(t)\right|^2 
\end{equation}
and
\begin{equation}
\psi_2(t,\mathbf{s})=\omega(\gamma (t),\gamma(t+s_1))
+ \sum_{i=1}^{n-2}\omega (t+s_i ,t+s_{i+1}) +\omega(\gamma(t+s_{n-1}),\gamma(t)).
\end{equation}

We claim that for $t$ fixed  $\psi$ has a critical point at $\mathbf{s}=0$. 
In fact, let $s_i=(u^i_1,\cdots,u^i_d)$, $i=1,\cdots, n-1$.  Then for $i=2,\cdots ,n-2$ and $j=1,\cdots, d$
\begin{align}\label{grad1}
\nonumber\frac{\partial\psi_1(t,\mathbf{s})}{\partial u^i_j}=&(\gamma(t+s_i)-\gamma(t+s_{i+1}))\cdot\g_j(t+s_i)\\
 +&(\gamma(t+s_{i})-\gamma(t+s_{i-1}))\cdot\g_j(t+s_{i-1}),
\end{align}
and
 \begin{align}\label{grad2}
\nonumber\frac{\partial\psi_1(t,\mathbf{s})}{\partial u^1_j}=&(\gamma(t+s_1)-
\gamma(t+s_{2}))\cdot\g_j(t+s_1)\\
 +&(\gamma(t+s_{1})-\gamma(t))\cdot\g_j(t+s_1).
\end{align}
In addition
  \begin{align}\label{grad3}
\nonumber\frac{\partial\psi_1(t,\mathbf{s})}{\partial u^{n-1}_j}=&(\gamma(t+s_{n-1})-\gamma(t+s_{n-2})\cdot\g_j(t+s_{n-1}))\\
 +&(\gamma(t+s_{n-1})-\gamma(t))\cdot\g_j(t+s_{n-1}).
\end{align}

For $\psi_2 $ we have as before for $i\neq 1, n-1$
\begin{equation}\label{grad4}
\frac{\partial\psi_2}{\partial u_j^{i}}(t,\mathbf{s})=
\omega(\g_j(t+s_i),\gamma(t+s_{i+1}))+\omega(\gamma(t+s_{i-1}),\g_j(t+s_i)),
\end{equation} and for $i=1,n-1$
\begin{equation}\label{grad5}
\frac{\partial\psi_2}{\partial u_j^{i}}(t,\mathbf{s})=
\omega(\g_j(t+s_i),\gamma(t))+\omega(\gamma(t),\g_j(t+s_i)).
\end{equation}
It follows that $\nabla_{\mathbf{s}}\psi(t,0)=0$.

\medskip
We can calculate directly from \eqref{grad1},\eqref{grad2},\eqref{grad3}   
the Hessian matrix $\psi_{1}(t,\cdot)''(0)$ with respect to the $\s$ variables
at $\s=0$. It is the $(n-1)d\times (n-1)d$ matrix

\begin{equation}\label{hess1}
L_{n-1}^{(1)}=\begin{pmatrix}
2G(t) & -G(t) & 0& &\dots&0\\
- G(t)& 2G(t) & -G(t)& &\dots&0\\
0&- G&2G(t) & -G(t)&\dots &0\\
                                        \\
         0& &\dots &\ddots&\ddots&\ddots\\                               
0&& \dots& 0&-G(t)&2G(t)

\end{pmatrix}.
\end{equation}
written in $d\times d$ blocks.

\smallskip

We now compute the Hessian of $\psi_2$, using (\ref{grad4}) and
(\ref{grad5}).  Obviously the partial derivative
$\frac{\partial^2\psi_2}{\partial u_{j'}^{i'}\partial u_j^{i}}(t,0)$ is zero
if the indices $i$, $i'$ are not consecutive modulo $n$ (assigning to the $t$ variables the
index zero).
If the indices are consecutive mod $n$, one can check that
\begin{equation}
\frac{\partial^2\psi_2}{\partial u_{j'}^{i+1}\partial u_j^{i}}(t,0)=
\omega(\g_{j'}  (t),
\g_{j} (t))
\end{equation}
because terms containing second derivatives of $\gamma$ appear in pairs that cancel
each other out, by the skew symmetry of $\omega$.  It follows that the Hessian of $\psi_2$ is

\begin{equation}\label{hess2}
L_{n-1}^{(2)}=\begin{pmatrix}
0 & H(t) & 0& &\dots&0\\
- H(t)& 0 & H(t)& &\dots&0\\
0&- H(t)&0 & H(t)&\dots &0\\
                                        \\
         0& &\dots &\ddots&\ddots&\ddots\\                               
0&& \dots& 0&-H(t)&0
\end{pmatrix}
\end{equation}
where $H$ is the $d\times d$ skew-symmetric matrix
\begin{equation}\label{matrizH}
H(t) = \begin{pmatrix}
\omega(\g_i(t,0), \g_j(t, 0))
\end{pmatrix}.
\end{equation}
In conclusion, the Hessian of the full phase $\psi = i\psi_1 + \psi_2$ with
respect to the $\s$ variables is
the block tri-diagonal Toeplitz matrix 
\[
\text{Hess}_{n-1}(t) = iS_{n-1}(t)
\]
where
\begin{equation}\label{theHessian}
S_{n-1}(t)= 
\begin{pmatrix}
2 G(t) & - G(t)-iH(t) & 0& \dots&0\\
- G(t)+i H(t)& 2 G(t) & - G(t)-iH(t)& \dots&0\\
0&- G(t)+i H(t)&2 G(t) & \dots &0\\
                                       \\
         0& &\dots &\ddots\ddots& - G(t)-iH(t)\\                               
0&& \dots& - G(t)+iH(t)&2 G(t)
\end{pmatrix}.
\end{equation}
It is shown in the appendix (see (\ref{laRaizCuadrada})) that
\begin{equation}\label{inAppendix}
\det\left(\frac 1i\text{Hess}_{n-1}\right) =\det(G)^{n-1}\, \Delta_n^2.
\end{equation}
In particular the determinant of this matrix is positive.  Since the $\s$ variables are variables normal to 
$\Gamma^\Delta$, this proves the claim ($\ast$).

\smallskip

Choose $\epsilon>0$ so that $\s=0$ is the only critical point of the phase with respect 
to the $\s$ variables and let $h_\chi(t,\s) = \chi\circ\Xi (t, \s)$.  Then, by 
 the stationary phase method, 
\begin{equation*}
\int_{B(0,\epsilon)^{n-1}} e^{ik\psi(t,\mathbf{s})}\ a_1(\gamma(t))\, a_2(\gamma(t+s_1))\cdots 
a_n(\gamma(t+s_{n-1}))\, h_\chi(t,\mathbf{s}\,d\mu(t+s_1)\cdots d\mu(t+s_{n-1}))\,
d\s
\end{equation*}
\begin{equation*}
=\left(\frac{2\pi}{k}\right)^{d(n-1)/2}[\det(-i\text{Hess}_{n-1})(t)]^{-1/2}\ \left(\prod_{j=1}^n a_j(\gamma(t))\right)
\, h_\chi(t,0)\, d\mu(t)^{n-1}\left(1+O(1/k)\right)
\end{equation*}
\begin{equation*}
 =\left(\frac{2\pi}{k}\right)^{d(n-1)/2}\Delta_n^{-1}\  \left(\prod_{j=1}^n a_j(\gamma(t))\right)
 \,h_\chi(t,0)\left(1+O(1/k)\right).
\end{equation*}
Notice that the factor of $\sqrt{\det(G)^{n-1}}$ in the square root of (\ref{inAppendix}) 
cancels the Jacobian factors $d\mu(t)$.

Integrating the last expression on $B(0,\epsilon)$ with respect to $d\mu (t)dt$ we obtain
\begin{equation}\label{asintcarta} 
I_\chi(k)=2^{d\frac{n-1}{2}}\left(\frac{k}{\pi}\right)^{n(N-d/2)+d/2}\Delta_n^{-1}
\int_{\Gamma}\prod_{j=1}^n a_j(\zeta)\, \chi^\Delta(\zeta)\,d\sigma(\zeta)\, \left(1+O(1/k)\right)
\end{equation}
where $\chi^\Delta(\zeta) = \chi(\zeta,\,\zeta,\ldots , \zeta).$

\medskip
We now let $\{\chi_\alpha\}$ denote a partition of unit of a neighborhood of 
$\Gamma^\Delta$ in $\Gamma^n$ so that $\sum_\alpha \chi_\alpha\equiv 1$ in a neighborhood 
of the support of $\prod_{j=1}^n a_j(\zeta_j)$,  subordinated to a cover for which the previous
calculations apply.  Then
\begin{equation}\label{}
\tr(\Upsilon) = \sum_\alpha I_{\chi_\alpha}(k) + O(k^{-\infty}),
\end{equation}
and using \eqref{asintcarta}  we obtain
\[
\tr(\Upsilon)=2^{d\frac{n-1}{2}}\left(\frac{k}{\pi}\right)^{n(N-d/2)+d/2}\,
\sum_{\alpha} \int_{\Gamma} \Delta_n^{-1}\left(\prod_{j=1}^n a_j(\zeta)\right)\,
\chi^\Delta_{\alpha}(\zeta)d\sigma (\zeta)\left(1+O(1/k)\right).
\]
\begin{equation}
=2^{d\frac{n-1}{2}}\left(\frac{k}{\pi}\right)^{n(N-d/2)+d/2}\,\int_{\Gamma}
\frac{\prod_{j=1}^n a_j(\zeta)}{\Delta_n}
\,d\sigma (\zeta)\left(1+O(1/k)\right)
\end{equation}
since $\sum_\alpha\chi^\Delta = 1$ in the support of $\prod_j a_j$.
\hfill $\square$.

\subsection{The \Sz limit theorem for polynomials}

We now specialize to the case when $\Gamma$ is isotropic or co-isotropic.
\begin{proposition}
Let $a_j\in C_0^\infty(\Gamma)$, $j=1,\ldots, n$, and assume $\Gamma$
is isotropic or co-isotropic.  Then
\begin{equation}\label{muchosS}
2^{d'/2}\left(\frac{\pi}{k}\right)^{d/2}\tr(S_{a_1d\sigma}\cdots S_{a_nd\sigma})=\frac{1}{n^{d'/2}}\int_\Gamma\prod_{j=1}^n a_j(w)\,d\sigma(w)+O(1/k).
\end{equation}
\end{proposition}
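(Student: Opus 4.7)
The plan is to reduce \eqref{muchosS} to Theorem \ref{ManyOps} by unwinding the definition of $S_{ad\sigma}$, after which the claim becomes a pointwise identity for $\Delta_n$ that is verified directly in each of the two geometric cases. Since $S_{ad\sigma} = 2^{-d'/2}(\pi/k)^{N-d/2}T_{ad\sigma}$, I would first write
\begin{equation*}
\tr(S_{a_1d\sigma}\cdots S_{a_nd\sigma}) = 2^{-nd'/2}\left(\frac{\pi}{k}\right)^{n(N-d/2)}\tr(T_{a_1d\sigma}\cdots T_{a_nd\sigma})
\end{equation*}
and substitute the expansion \eqref{laMeraPapa}. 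The $(k/\pi)^{n(N-d/2)}$ factors cancel, and combining the remaining powers of $k$, $\pi$ and $2$ with the prefactor $2^{d'/2}(\pi/k)^{d/2}$ on the left-hand side of \eqref{muchosS} reduces the whole statement to the single pointwise identity
\begin{equation*}
\Delta_n(w) = n^{d'/2}\, 2^{(n-1)(d-d')/2} \qquad\text{for every } w\in\Gamma.
\end{equation*}

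For the isotropic case this is immediate: $T_w\Gamma\subset T_w\Gamma^\circ$ gives $T_w\Gamma\cap T_w\Gamma^\circ = T_w\Gamma$, so by \eqref{usefulFact} $K_w\equiv 0$, hence $r=0$ and the empty product yields $\Delta_n = n^{d/2}$, which equals $n^{d'/2}\cdot 2^0$ since $d'=d$ and the exponent $(n-1)(d-d')/2$ vanishes. The co-isotropic case is the main point and rests on the following linear-algebraic lemma: if $V\subset\bbR^{2N}$ is co-isotropic and $W := V\cap(V^\circ)^\perp$, then $J(W)=W$ and $K|_W = J|_W$. Granting this, every non-zero eigenvalue of $K_w$ equals $\pm i$, so all $\lambda_\ell = 1$ and $r = \tfrac{1}{2}\dim W = d-N$; evaluating \eqref{deltaene} and using $d'=2N-d$ then gives
\begin{equation*}
\Delta_n = n^{d/2-(d-N)}\left(\frac{2^n}{2}\right)^{d-N} = n^{d'/2}\, 2^{(n-1)(d-N)} = n^{d'/2}\, 2^{(n-1)(d-d')/2},
\end{equation*}
exactly the required identity.

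The geometric lemma itself is straightforward from the identity $V^\perp = JV^\circ$ already noted in the paper. For $v\in W$ and any $v'\in V^\circ$, one has $\langle Jv, Jv'\rangle = \langle v,v'\rangle = 0$, so $Jv\perp V^\perp$ and hence $Jv\in V$; and for $w\in V^\circ$, $\langle Jv,w\rangle = -\langle v,Jw\rangle = 0$ because $Jw\in V^\perp$ and $v\in V$, so $Jv\in(V^\circ)^\perp$. Thus $Jv\in W$, and since $Jv$ already lies in $V$ it equals its own projection onto $V$, giving $K_w v = \Pi_V Jv = Jv$. The main obstacle is really just bookkeeping: tracking the signs, the convention $\omega(v,w)=v\cdot Jw$, and the duality $V^\perp = JV^\circ$. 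Once the spectral description of $K_w$ on $W$ is in hand, the powers of $n$ and $2$ conspire automatically and the claim follows.
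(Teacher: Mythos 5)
Your argument is correct and follows essentially the same path as the paper: reduce to Theorem \ref{ManyOps}, compute $\Delta_n$ in the two cases from the spectral data of $K_w$, and carry out the bookkeeping of powers of $2$ and $k$. The only difference is cosmetic: the paper identifies the image of $K$ as $E = J(V)\cap V$ and deduces $K|_E = J|_E$ from that characterization, while you work with $W := V\cap(V^\circ)^\perp$ and prove $J(W) = W$, $K|_W = J|_W$ directly; but $(V^\circ)^\perp = J(V)$ via the identity $V^\perp = J(V^\circ)$, so $W=E$ and the two lemmas are the same statement. (You might spell out why $\dim W = 2(d-N)$ — it is because $V = W\oplus V^\circ$ with $\dim V^\circ = 2N-d$ — but this is a one-line gap, not an error.)
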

\begin{proof}
First assume that $\Gamma$ is isotropic.  Then $d'=d$ and
\[
S_{a_jd\sigma} = 2^{-d/2}\,\left(\frac{\pi}{k}\right)^{N-d/2}\, T_{a_jd\sigma}, 
\] 
and so by (\ref{laMeraPapa})
\[
\tr(S_{a_1d\sigma}\cdots S_{a_nd\sigma}) = \left(\frac{k}{2\pi}\right)^{d/2}
\left(\int_{\Gamma}\frac{\prod_{j=1}^n a_j(w)}{\Delta_n}\, d\sigma + O(1/k)\right).
\]
Moreover, by (\ref{usefulFact})  the endomorphism $K$ is 
identically zero, and therefore
\begin{equation}\label{}
\forall n=1,2,\cdots\qquad \Delta_n = n^{d/2}.
\end{equation}

\medskip
Let us now assume that $\Gamma$ is co-isotropic, so that $d'=2N-d$ and
\[
S_{a_jd\sigma} = 2^{-N+d/2}\,\left(\frac{\pi}{k}\right)^{N-d/2}\, T_{a_jd\sigma}.
\]
Once again by (\ref{laMeraPapa}) we can conclude that
\begin{equation}\label{}
\tr(S_{a_1d\sigma}\cdots S_{a_nd\sigma}) = 2^{n(d-N)}\, \left(\frac{k}{2\pi}\right)^{d/2}
\left(\int_{\Gamma}\frac{\prod_{j=1}^n a_j(w)}{\Delta_n}\, d\sigma + O(1/k)\right).
\end{equation}
We now compute $\Delta_n$.  By (\ref{usefulFact}),  at each point
in $\Gamma$
\[
\ker(K) = T_w\Gamma^\circ.
\]
Recall that $r$ is half the rank of $K$, and since $ T_w\Gamma^\circ$ and $ T_w\Gamma$
have complementary dimension the dimension of $\Gamma$ must equal
\[
d = N+r.
\]
In particular $r$ is constant.  Note that $d' = N-r$.

To continue we  need a lemma from linear algebra:
\begin{lemma}
If $V$ is co-isotropic, the image $E$ of $K$ is the maximal complex subspace of $V$:
\begin{equation}\label{}
E = J(V)\cap V.
\end{equation}
\end{lemma}
\begin{proof}
The inclusion $\supset$ is obvious.  To prove the reverse inclusion, 
let $v = \Pi(J(u))$ with $u\in V$.  Then
$v = J(u)+w$ with $w\in V^\bot = J(V^\circ)$.  Therefore $\exists a\in V^\circ$ such that
$w=J(a)$ and finally $v=J(u+a)$ with $u+a\in V$.
\end{proof}
Therefore $V =  E\oplus V^\circ$
and the mapping $K:V\to V$ is diagonal with respect to this decomposition.  It is zero
on $V^\circ$ and agrees with $J$ on E.  Therefore, there exists a basis of $V$ where
the matrix for the transformation $K$ is the canonical one, namely 
\begin{equation}\label{}
W_{\text{can}} =\begin{pmatrix}
0 & -I_r & 0\\
I_r & 0 & 0\\
0 & 0 & 0
\end{pmatrix}.
\end{equation}
In particular all the eigenvalues $\lambda_\ell$ are equal to one, and one computes
\begin{equation}\label{}
\forall n=1,2,\ldots \qquad \Delta_n = n^{\frac{N-r}{2}}\, 2^{r(n-1)} = n^{d'/2}\,2^{(n-1)(d-N)}.
\end{equation}
It follows that 
$
\frac{2^{n(d-N)}}{\Delta_n} = \frac{2^{d-N}}{n^{d'/2}}
$
and
\begin{equation}\label{}
\tr(S_{a_1d\sigma}\cdots S_{a_nd\sigma}) = 2^{d-N}\, \left(\frac{k}{2\pi}\right)^{d/2}
\left(\int_{\Gamma}\frac{\prod_{j=1}^n a_j(w)}{n^{d'/2}}\, d\sigma + O(1/k)\right).
\end{equation}
As the constant in front of the integral is
$
2^{d/2-N} \left(\frac{k}{\pi}\right)^{d/2} = 2^{-d'/2} \left(\frac{k}{\pi}\right)^{d/2},
$
the proposition is proved.
\end{proof}

Taking $a_1=a_2=\cdots = a_n = a$, using the linearity of the trace
and since $\calO_{-n/2}(s^n)(t) = \frac{t^n}{n^{d/2}}$, we immediately obtain:
\begin{corollary}\label{TracePowers}  
The \Sz limit Theorem \ref{Szego} holds for $\varphi$ a polynomial
without constant term.
\end{corollary}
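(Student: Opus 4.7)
The proof is essentially an application of the linearity of the trace to the preceding proposition, together with the identity (\ref{laPropriedad}). The key observation is that Proposition (\ref{muchosS}) above does not require the $a_j$ to be real-valued, let alone non-negative; it is stated and proved for arbitrary $a_j\in C_0^\infty(\Gamma,\bbC)$. This is what will allow us to drop the hypothesis $a\geq 0$ when $\varphi$ is a polynomial without constant term.

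Concretely, the plan is as follows. Write $\varphi(t)=\sum_{n=1}^{m}c_n t^n$, so that $\varphi(S_{ad\sigma})=\sum_{n=1}^m c_n\, S_{ad\sigma}^n$ and, by linearity of the trace,
\begin{equation*}
2^{d'/2}\left(\frac{\pi}{k}\right)^{d/2}\tr\bigl(\varphi(S_{ad\sigma})\bigr)=\sum_{n=1}^{m}c_n\cdot 2^{d'/2}\left(\frac{\pi}{k}\right)^{d/2}\tr\bigl(S_{ad\sigma}^n\bigr).
\end{equation*}
For each $n$, specialize (\ref{muchosS}) to $a_1=\cdots=a_n=a$ to obtain
\begin{equation*}
2^{d'/2}\left(\frac{\pi}{k}\right)^{d/2}\tr\bigl(S_{ad\sigma}^n\bigr)=\frac{1}{n^{d'/2}}\int_\Gamma a(w)^n\,d\sigma(w)+O(1/k).
\end{equation*}

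Summing against $c_n$ and using (\ref{laPropriedad}), namely $\calO_{-d'/2}(s^n)(t)=t^n/n^{d'/2}$, together with the linearity of $\calO_{-d'/2}$, yields
\begin{equation*}
\sum_{n=1}^m \frac{c_n}{n^{d'/2}}\int_\Gamma a(w)^n\,d\sigma(w)=\int_\Gamma\sum_{n=1}^m c_n\,\calO_{-d'/2}(s^n)(a(w))\,d\sigma(w)=\int_\Gamma \calO_{-d'/2}(\varphi)(a(w))\,d\sigma(w),
\end{equation*}
which is exactly the right-hand side of (\ref{szegoLimit}). Since only finitely many $n\in\{1,\ldots,m\}$ contribute, the $O(1/k)$ error terms aggregate into a single $O(1/k)$, and we may pass to the limit $k\to\infty$.

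There is no real obstacle here; the only subtlety worth flagging is precisely the one that makes the positivity assumption unnecessary. Proposition (\ref{muchosS}) is proved by stationary phase applied directly to the kernel formula (\ref{trazaDeMuchos}), with no appeal to the spectral theorem or to any positivity of $T_{ad\sigma}$, so it applies verbatim to complex-valued $a$. Hence the corollary holds as stated, with no sign restriction on $a$.
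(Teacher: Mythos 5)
Your proposal is correct and is essentially the same argument the paper gives (the paper merely states it in one sentence before the corollary: specialize the preceding proposition to $a_1=\cdots=a_n=a$, use linearity of the trace, and invoke $\calO_{-d'/2}(s^n)(t)=t^n/n^{d'/2}$). Your observation that positivity of $a$ is not needed is also in agreement with the paper, which records this explicitly in the remark following Theorem~\ref{Szego}.
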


\subsection{Bounding the traces}
Let us now assume that $a\geq 0$ and that $\Gamma$ is isotropic or co-isotropic.
The next step in order to obtain Theorem \ref{Szego} is to
show that the rescaled traces $k^{-d/2}\tr\left( S_{ad\sigma}^p\right) $
are bounded as $k\to\infty$.  If $p\geq 1$ this clearly follows from Corollary
\ref{TracePowers}.
Our immediate goal is to extend this bound to $0<p<1$.  We will prove:
\begin{lemma}\label{MainLemma}
Assume $a\geq 0$ is compactly supported
and that $\Gamma$ is isotropic or co-isotropic.  Then 
for every $p\in(0,1)$ there is $C>0$ such that for all $k$
\[
k^{-d/2}\tr\left( S_{ad\sigma}^p\right) \leq C.
\]
\end{lemma}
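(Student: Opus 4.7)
The plan is to combine the integer-moment asymptotics of Corollary~\ref{TracePowers} with a layer-cake representation of $\tr(S_{ad\sigma}^p)$.

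By the homogeneity $S_{\alpha a\,d\sigma}=\alpha\,S_{a\,d\sigma}$, the identity $\tr(S_{ad\sigma}^p)=\|a\|_\infty^p\,\tr(S_{(a/\|a\|_\infty)d\sigma}^p)$ reduces the lemma to the case $\|a\|_\infty\le 1$. Under this normalization $\int_\Gamma a^n\,d\sigma\le\int_\Gamma a\,d\sigma$ for every $n\ge 1$, so Corollary~\ref{TracePowers} gives the uniform-in-$k$ moment bound
\[
\tr\bigl(S_{ad\sigma}^n\bigr)\le\frac{C}{n^{d'/2}}\,k^{d/2}\qquad(n=1,2,\dots),
\]
with $C=C(a)$ independent of $n$.

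Writing $\mu_1\ge\mu_2\ge\cdots\ge 0$ for the eigenvalues of $S=S_{ad\sigma}$ and $N(\lambda)=\#\{j:\mu_j\ge\lambda\}$, Chebyshev's inequality gives $N(\lambda)\le\tr(S^n)/\lambda^n$ for every $n\ge 1$; minimization over integer $n$---with extremizing continuous value $n^{\ast}=d'/(2\log(1/\lambda))$---produces the count estimate
\[
N(\lambda)\le C\,k^{d/2}\bigl(1+\log(1/\lambda)\bigr)^{d'/2}.
\]
Combined with the layer-cake formula $\tr(S^p)=\int_0^{\|S\|}p\lambda^{p-1}\,N(\lambda)\,d\lambda$ this yields
\[
k^{-d/2}\tr(S^p)\le Cp\int_0^{\|S\|}\lambda^{p-1}\bigl(1+\log(1/\lambda)\bigr)^{d'/2}\,d\lambda,
\]
and the substitution $u=\log(1/\lambda)$ reduces the integrand to $u^{d'/2}e^{-pu}$, which is integrable on $(0,\infty)$ for every $p>0$.

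The main technical obstacle is justifying the eigenvalue-count bound at \emph{all} scales $\lambda\in(0,\|S\|]$. The Chebyshev optimization alone produces the asserted $(\log(1/\lambda))^{d'/2}$ behaviour only when $\lambda\ge e^{-d'/2}$, where the extremizing $n^{\ast}\ge 1$ is a valid integer; for smaller $\lambda$ the best integer choice $n=1$ yields only the weaker Weyl-type bound $N(\lambda)\le Ck^{d/2}/\lambda$, which is not integrable against $\lambda^{p-1}$ near the origin when $p<1$. Closing this gap requires exploiting the integral-operator structure of $S_{ad\sigma}$---its Wick kernel is concentrated in a $k^{-1/2}$-tube about the diagonal of $\Gamma$, so the ``effective rank'' of $S$ is $O(k^{d/2})$---in order to bound the tail $\sum_{\mu_j<\epsilon}\mu_j^p$ uniformly in $k$ independently of the (possibly infinite) true rank.
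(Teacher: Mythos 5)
Your self-diagnosis is correct and the gap you identify is fatal: the argument as written cannot be closed. The integer-moment bound $\tr(S^n)\le C\,k^{d/2}/n^{d'/2}$ combined with Chebyshev gives, for $\lambda<e^{-d'/2}$, only $N(\lambda)\le C\,k^{d/2}/\lambda$ (the choice $n=1$ is already optimal among integers in that range), and $\int_0^\epsilon\lambda^{p-1}\cdot\lambda^{-1}\,d\lambda=\int_0^\epsilon\lambda^{p-2}\,d\lambda$ diverges for $p<1$. No amount of information from the integer moments alone can control $\sum_{j}\mu_j^p$ for $p<1$; one genuinely needs direct access to the structure of $S$ at small spectral scales. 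Your closing remark about ``effective rank $O(k^{d/2})$'' does not salvage the argument either: knowing $\sum_{j>M}\mu_j\le\delta$ with $M=O(k^{d/2})$ is compatible with $\sum_{j>M}\mu_j^p=\infty$ for $p<1$ (take, e.g., $\mu_{M+l}\sim \delta/(l\log^2 l)$). There is also a secondary unproven uniformity issue: Corollary~\ref{TracePowers} is an asymptotic as $k\to\infty$ for each fixed $n$, and the $O(1/k)$ constant depends on $n$; making the Chebyshev optimization over $n$ legitimate requires an explicit bound uniform in $n$, which the corollary as stated does not supply.

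The paper's proof sidesteps eigenvalue counting entirely. It uses the operator Jensen inequality: for $0<p\le 1$, a non-negative operator $T$, and a unit vector $e$, one has $\inner{T^pe}{e}\le\inner{Te}{e}^p$. Applying this with $e$ running over normalized coherent states $k_z$ gives the pointwise bound $\widetilde{T^p}(z)\le\widetilde T(z)^p$ on Berezin transforms, and hence
\[
\tr\bigl(T_{ad\sigma}^p\bigr)\ \le\ \constants\int_{\bbC^N}\widetilde T(z)^p\,dL(z)
\ =\ \left(\frac{k}{\pi}\right)^{N(1+p)}\int_{\bbC^N}\left(\int_\Gamma e^{-k|z-w|^2}a(w)\,d\sigma(w)\right)^p dL(z).
\]
The last integral, denoted $\calI_p$, is then estimated by localizing to a tubular neighborhood $\calN$ of $\Gamma$ (the contribution off $\calN$ is $O(k^{-\infty})$) and applying stationary phase in tubular coordinates $(s,t)$: stationary phase in the tangential variable $s$ contributes $k^{-d/2}$ and leaves a factor $e^{-k|t|^2}$ whose integral over the normal variable $t$ contributes $k^{-\nu/2}$ with $\nu=2N-d$, yielding $\calI_p\le C\,k^{-N+d(1-p)/2}$ and finally $\tr(S^p)\le C\,k^{d/2}$. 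The essential point is that this argument works at the level of the integral kernel and not merely the spectrum, which is precisely the information your Chebyshev route discards.
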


As we will see the proof reduces to estimating the integral
\begin{equation}\label{}
\calI_p = \int_{\bbC^N}\left(\int_\Gamma e^{-k|z-w|^2}\, a(w)\, d\sigma(w)\right)^p\, dL(z)
\end{equation}
as $k\to\infty$.  

\subsubsection{Localization to a tubular neighborhood}
Let us introduce a tubular neighborhood of $\Gamma$,
\begin{equation}\label{}
\calN = \{ z\in\bbC^n\;;\; d(z,\Gamma) \leq \epsilon\}
\end{equation}
where $d(z,\Gamma) = \min_{w\in\Gamma} |z-w|$ is the distance from $z$ to
$\Gamma$, and $\epsilon>0$ is small enough so that $\calN$ is a 
bundle $\calN\to\Gamma$ whose fibers are $(2N-d)$-dimensional disks.
We will prove:
\begin{lemma}
\[
\calI_p = \int_{\calN}\left(\int_\Gamma e^{-k|z-w|^2}\, a(w)\, d\sigma(w)\right)^p\, dL(z) + 
O(k^{-\infty}).
\]
\end{lemma}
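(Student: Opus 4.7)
The plan is to show that the contribution to $\calI_p$ from the complement $\bbC^N\setminus\calN$ is $O(k^{-\infty})$, which amounts to bounding the inner integrand pointwise and integrating. Write
\[
F_k(z) := \int_\Gamma e^{-k|z-w|^2}\,a(w)\,d\sigma(w),
\]
and fix $R>0$ large enough that $\mathrm{supp}(a)\subset B(0,R)\cap \Gamma$. The complement $\bbC^N\setminus\calN$ naturally splits as a bounded piece $A_1 := (\bbC^N\setminus\calN)\cap B(0,2R)$ and a far-field piece $A_2 := \{|z|>2R\}$, which I will handle separately.

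On $A_1$, the definition of the tubular neighborhood gives $d(z,\Gamma)\geq\eps$, hence $|z-w|^2\geq\eps^2$ for every $w\in\Gamma$, so
\[
F_k(z)\leq e^{-k\eps^2}\,\|a\|_\infty\,\sigma(\mathrm{supp}(a)),
\]
and since $|A_1|$ is finite, $\int_{A_1}F_k(z)^p\,dL(z)\leq C\,e^{-kp\eps^2}$, which is $O(k^{-\infty})$. On $A_2$, for any $w\in\mathrm{supp}(a)$ we have $|z-w|\geq|z|-R\geq|z|/2$, and therefore
\[
F_k(z)\leq \|a\|_\infty\,\sigma(\mathrm{supp}(a))\,e^{-k|z|^2/4}.
\]
The remaining integral is a Gaussian tail:
\[
\int_{|z|>2R} e^{-kp|z|^2/4}\,dL(z) = (kp)^{-N}\int_{|u|>2R\sqrt{kp}} e^{-|u|^2/4}\,dL(u),
\]
which decays faster than any power of $k$ since the tail integral $\int_{|u|>\rho}e^{-|u|^2/4}\,dL(u)$ decays exponentially in $\rho^2$.

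Adding these two estimates gives $\int_{\bbC^N\setminus\calN}F_k(z)^p\,dL(z) = O(k^{-\infty})$, and subtracting this from the defining expression for $\calI_p$ yields the lemma. The argument is essentially a routine tubular-neighborhood localization; there is no real obstacle, the only slight subtlety being that one cannot simply bound $F_k(z)\leq e^{-k\eps^2}\|a\|_1$ globally, as the integration in $z$ is over all of $\bbC^N$, and that is why the splitting into a bounded region and a far field is necessary to guarantee the Gaussian decay in $z$ at infinity.
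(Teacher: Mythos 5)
Your proof is correct and takes essentially the same approach as the paper's: both split $\bbC^N\setminus\calN$ into a bounded piece (where $d(z,\Gamma)\geq\eps$ forces the integrand to be $O(e^{-kp\eps^2})$) and a far-field piece (where $|z-w|\gtrsim|z|$ gives a Gaussian tail), and conclude each contribution is $O(k^{-\infty})$.
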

\begin{proof}
Let $\calW_N = \bbC^N\setminus\calN$ be the complement of $\calN$, and partition it
as follows,
\[
\calW_N = \calW_R\cup\calW_\infty,\qquad \calW_R = \{ z\in\calW_N\;;\; |z|\leq R\}
\]
where $R>0$ is chosen large enough so that 
\[
|z|\geq R\ \text{and}\ w\in\Gamma\quad\Rightarrow\quad |z-w| \geq |z|/2.
\]
It is clear that there is $C>0$ such that
\[
\forall z\in\calW_R,\ w\in \Gamma\quad |z-w|^2 \geq C,
\]
and therefore
\[
\int_{\calW_R}\left(\int_\Gamma e^{-k|z-w|^2}\, a(w)\, d\sigma(w)\right)^p\, dL(z)
\leq C_1 \ e^{-kpC}
\]
is negligeable.  Furthermore, for some constants $C,\, c>0$,
\[
\int_{\calW_\infty}\left(\int_\Gamma e^{-k|z-w|^2}\, a(w)\, d\sigma(w)\right)^p\, dL(z)
\leq C \int_{\{z\;;\; |z|\geq R\}} e^{-kc|z|^2}\, dL(z),
\]
and it is not hard to see that the last integral is $O(k^{-\infty})$ as well.
\end{proof}

\subsubsection{Integration over $\calN$}

Using a (finite) partition of unit we can assume without loss of generality that
the support of $a$ is contained in the image of a parametrization of $\Gamma$,
\[
\bbR^d\supset U \xrightarrow{\gamma} \Gamma.
\]
Let us introduce coordinates $s = (s_1,\ldots , s_d)\in\bbR^d$ and
$z = (x_1,\ldots , x_{2N})\in\bbR^{2N}\cong\bbC^N$.
Also introduce smooth orthonormal vector fields $E_j$, $j=1,\ldots \nu:= 2N-d$,
defined on the image of $\gamma$ which, at each point $p\in \Gamma$ span
the normal space $T_p\Gamma^\bot$.  We will regard the $E_j$ as functions
of $s$ via the parametrization.  
Letting $B(0,\epsilon) = \{ t\in\bbR^\nu\;;\; |t|<\epsilon\}$,
we can define a parametrization of $\calN$ by
\[
\begin{array}{rcl}
U\times B(0,\epsilon) & \longrightarrow & \calN\\
(s, t) & \mapsto & z(s,t) := \gamma(s) + \sum_{j=1}^\nu t_j E_j(s).
\end{array}
\]

\medskip
We now apply the method of stationary phase to the integral
\[
J_k(s,t) := \int_U e^{-k|z(s,t)-\gamma(u)|^2}\, a(\gamma(u))\, h(u)\, du,
\]
where $h(u)du = d\sigma$.  Choosing $\epsilon$ small enough we can
guarantee that the only critical point of the phase is at $u=s$, which is the
minimum of the function $u\mapsto |z(s,t)-\gamma(u)|^2$.  Since the normal moving
frame $\{E_j\}$ is orthonormal
\[
|z(s,t)-\gamma(s)|^2 = \sum_{j=1}^\nu t_j^2 ,
\]
and the method of stationary phase gives that

\begin{equation}\label{}
J_k(s,t) = \left(\frac{2\pi}{ k}\right)^{d/2}\, \frac{e^{-k|t|^2}}{\sqrt{H(s,t)}}\,
\big( a(\gamma(s))\, h(s) + O(1/k)\big),
\end{equation}
uniformly in $(s,t)$.  Here $H(s,t)$ is the determinant of the Hessian of the phase at $u=s$.
On the other hand, by the assumption on the support of $a$,
\[
\calI_p =
\int_{U\times B(0,\epsilon)} J_k(s,t)^p\, W(s,t)\, ds\, dt + O(k^{-\infty})
\]
where $W(s,t)ds\,dt = dL(z)$.
Therefore, for $k$ sufficiently large
\[
\calI_p \leq C \int_{B(0,\epsilon)} \frac{e^{-kp|t|^2}}{ k^{pd/2}}\, dt \leq C k^{-(pd+\nu)/2}.
\]
Recalling that $\nu = 2N-d$, we obtain:
\begin{lemma}
There is $C>0$ such that for all $k$ sufficiently large
\[
\calI_p \leq C \, k^{-N+d(1-p)/2}.
\]
\end{lemma}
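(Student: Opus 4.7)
The plan is to assemble the lemma directly from the stationary-phase expansion derived in the paragraph just preceding it, plus an elementary Gaussian estimate. First I would read off, from the expansion
\[
J_k(s,t) = \left(\frac{2\pi}{k}\right)^{d/2}\, \frac{e^{-k|t|^2}}{\sqrt{H(s,t)}}\,\big( a(\gamma(s))\, h(s) + O(1/k)\big),
\]
a pointwise bound of the form $J_k(s,t)\le C' k^{-d/2} e^{-k|t|^2}$, uniformly on the compact set where the integrand is supported. The Hessian determinant $H(s,t)$ is bounded away from zero there, since at $t=0$ it is (twice) the Gramian of the tangent frame of $\Gamma$, which is uniformly non-degenerate on the compact piece of $\Gamma$ containing $\mathrm{supp}(a)$.

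Next I would raise this bound to the $p$-th power, multiply by the smooth Jacobian $W(s,t)$ of the tubular-neighborhood parametrization, and integrate over $U\times B(0,\epsilon)$; using the preceding lemma that localizes $\calI_p$ to $\calN$ (up to an $O(k^{-\infty})$ error), this gives
\[
\calI_p \;\le\; C\, k^{-pd/2}\int_{B(0,\epsilon)} e^{-kp|t|^2}\, dt \;+\; O(k^{-\infty}),
\]
where the constant $C$ absorbs $\|W\|_\infty$, $\|a\|_\infty^{p}$, and the $s$-integral over the compact parameter domain. The Gaussian factor in $t$ is then handled by enlarging to $\bbR^\nu$ with $\nu = 2N-d$:
\[
\int_{B(0,\epsilon)} e^{-kp|t|^2}\, dt \;\le\; \int_{\bbR^\nu} e^{-kp|t|^2}\, dt \;=\; \left(\frac{\pi}{kp}\right)^{\nu/2},
\]
yielding $\calI_p \le C'' k^{-(pd+\nu)/2}$ for some $C''$ depending on $a,\Gamma,p$ but not on $k$.

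The final step is purely arithmetic: substituting $\nu = 2N-d$ gives
\[
-\frac{pd+\nu}{2} \;=\; -\frac{pd + 2N - d}{2} \;=\; -N + \frac{d(1-p)}{2},
\]
which is the exponent claimed. I do not foresee any genuine obstacle: the analytic content is packaged inside the stationary-phase lemma and the Gaussian integral, and what remains is book-keeping. The only subtlety worth flagging in the write-up is verifying that the error $O(1/k)$ in $J_k$ is uniform over the compact support so that raising to the $p$-th power is harmless; this follows from standard uniform bounds on stationary-phase expansions with parameters.
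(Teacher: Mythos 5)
Your proof is correct and follows essentially the same route as the paper's: both derive the pointwise bound $J_k(s,t)\lesssim k^{-d/2}e^{-k|t|^2}$ from the stationary-phase expansion (using that the Hessian determinant is bounded below on the compact support), raise to the $p$-th power, integrate the Gaussian in the normal variable $t\in\bbR^{\nu}$, and substitute $\nu=2N-d$. Your remark about the uniformity of the $O(1/k)$ error is a reasonable precaution, and is indeed implicit in the paper's invocation of stationary phase with parameters.
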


\bigskip\noindent
{\em Proof of Lemma \ref{MainLemma}.}\ 
Let $\widetilde{T^p}$ be the Berezin transform (or Berezin symbol) of $T^p$, namely,
\[
\widetilde{T^p}(z)= \inner{T^p(k_z)}{k_z}
\]
where $k_z=e_z/\Vert e_z\Vert$ are the normalized coherent states.  Then
\[
\tr(T^p) = \constants\int_{\bbC^N}\, \widetilde{T^p}(z)\, dL(z).
\]
On the other hand,
for $0<p\leq 1$ we have that if $e$ is a unit vector then $\inner{T^p(e)}{e}\leq \inner{T(e)}{e}^p$
(see for example \cite{KZ} Proposition 1.31).  Therefore
\[
\widetilde{T^p}(z) \leq\widetilde T (z)^p\quad\text{and}\quad
\tr(T^p) \leq \constants\int_{\bbC^N}\, \widetilde{T}(z)^p\, dL(z).
\]
If we recall that
\[
\widetilde{T}(z) = \constants\, \int_\Gamma e^{-k|z-w|^2} a(w)\, d\sigma(w),
\]
we see that
\[
\tr(T^p) \leq \left(\frac{k}{\pi}\right)^{N(1+p)}\,\calI_p \leq C k^{N(1+p) - N + d(1-p)/2}
= C k^{Np+d(1-p)/2}.
\]
Since $S = 2^{-d'/2}\left(\frac{k}{\pi}\right)^{-N+d/2} T$, 
$\tr (S^p) \leq C k^{d/2}$.

\hfill$\square$

\subsection{End of the proof of Theorem \ref{Szego}}
Let us fix $a\in C_0^\infty(\Gamma)$, $a\geq 0$.  In the remainder of this section
we denote $S_{ad\sigma}$ simply by $S$.
	
We begin by introducing the functional that appears on the right-hand side
of (\ref{szegoLimit}), namely
\[
\calF(\varphi) = \int_\Gamma \calO_{-d'/2}(\varphi)(a(w))\, d\sigma(w).
\]
We will need:
\begin{lemma}\label{funcional F}
The functional $\calF$ is well-defined in the class of functions
\[
\calC = \left\{ \varphi(s)\;;\; \exists p>0\ \text{such that}\ \frac{\varphi(s)}{s^p}\ \text{is continuous on }
[0, R]\right\}.
\]
Moreover, if $\varphi(s)/s^p$ is continous on $[0, R]$
\begin{equation*}
\vert \calF(\varphi)\vert\leq C_p\Vert s^{-p}\varphi(s)\Vert_{L^\infty[0,R]}.
\end{equation*}
\end{lemma}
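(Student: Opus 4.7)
The plan is to reduce everything to the pointwise identity $\calO_{-\alpha}(s^p)(t) = t^p/p^\alpha$ from (\ref{laPropriedad}) and then integrate over $\Gamma$ using that $a$ is compactly supported and bounded.

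First I would dispose of the trivial case $d'=0$: then $\calO_0$ is the identity, so $\calF(\varphi) = \int_\Gamma \varphi(a(w))\, d\sigma(w)$. Since $\varphi(s)/s^p$ is continuous on $[0,R]$ and $a(\Gamma)\subset [0,R]$, we have $|\varphi(a(w))| \leq \|s^{-p}\varphi\|_{L^\infty[0,R]}\, a(w)^p$, and $a(w)^p$ is bounded with compact support on $\Gamma$, so both well-definedness and the bound follow with $C_p = \int_\Gamma a(w)^p\, d\sigma(w)$.

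Next, I would handle the main case $d'>0$. Set $\alpha = d'/2 > 0$. The key observation is that the hypothesis $\varphi(s)/s^p \in C[0,R]$ gives the pointwise bound
\begin{equation*}
|\varphi(s)| \leq M\, s^p, \qquad M := \|s^{-p}\varphi\|_{L^\infty[0,R]},\quad s\in[0,R].
\end{equation*}
Since $a$ takes values in $[0,R]$, for each $w\in\Gamma$ with $a(w)>0$ the integrand defining $\calO_{-\alpha}(\varphi)(a(w))$ is dominated in absolute value by $M\, s^p \log(a(w)/s)^{\alpha-1}/s$, which is integrable on $(0,a(w))$ (the potential singularity at $s=0$ is controlled by $s^{p-1}$ with $p>0$, and $\log(a(w)/s)^{\alpha-1}$ is integrable near $s=a(w)$ for any $\alpha>0$). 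Hence $\calO_{-\alpha}(\varphi)(a(w))$ is defined as an absolutely convergent integral, and by (\ref{laPropriedad}) applied to $\varphi(s) = s^p$,
\begin{equation*}
|\calO_{-\alpha}(\varphi)(a(w))| \leq \frac{M}{\Gamma(\alpha)} \int_0^{a(w)} s^p\, \log(a(w)/s)^{\alpha-1}\, \frac{ds}{s} = M\cdot \frac{a(w)^p}{p^\alpha}.
\end{equation*}

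Finally I would integrate over $\Gamma$. Since $a$ is smooth, nonnegative, and compactly supported, $\int_\Gamma a(w)^p\, d\sigma(w) < \infty$; call this finite quantity $I_p$. Combining with the previous pointwise estimate,
\begin{equation*}
|\calF(\varphi)| \leq \int_\Gamma |\calO_{-\alpha}(\varphi)(a(w))|\, d\sigma(w) \leq \frac{M\, I_p}{p^\alpha} = C_p\, \|s^{-p}\varphi\|_{L^\infty[0,R]},
\end{equation*}
with $C_p := I_p/p^\alpha$. This shows $\calF$ is well-defined on $\calC$ and establishes the claimed bound. There is no substantial obstacle here: once (\ref{laPropriedad}) is invoked, the argument is a one-line domination followed by an application of Fubini/Tonelli implicit in integrating the pointwise estimate.
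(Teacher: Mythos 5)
Your proof is correct and takes essentially the same route as the paper: dominate $|\varphi(s)|$ by $M\,s^p$ and reduce to the explicit computation of $\calO_{-d'/2}(s^p)(a(w)) = a(w)^p/p^{d'/2}$ from (\ref{laPropriedad}). The paper's version is slightly terser (it scales $s \mapsto a(w)s$ to get a $w$-independent integral over $(0,1)$ rather than quoting the closed-form identity, and it dismisses $d'=0$ in one line rather than spelling it out), but the substance is identical.
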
\
\begin{proof} 
It suffices to check the case $d'>0$.
Notice that since  $s^{\mu}\vert \log(s)\vert$ if bounded in $(0,R]$, for any $\mu>0$ then $\int_0^{1}\vert s^p\log\left(1/s\right)\vert^{d'/2-1}\frac{ds}{s}$ is finite and
\begin{align*}
\int_0^{a(w)}\vert\varphi(s)\log\left(a(w)/s\right)\vert^{d'/2-1}\frac{ds}{s}&\leq\Vert t^{-p}\varphi\Vert_{L^\infty[0,R]}\int_0^{a(w)}\vert s^p\log\left(a(w)/s\right)\vert^{d'/2-1}\frac{ds}{s}\\
=&a(w)^p\Vert t^{-p}\varphi\Vert_{L^\infty[0,R]}\int_0^{1}\vert s^p\log\left(1/s\right)\vert^{d'/2-1}\frac{ds}{s}.
\end{align*}
From this the lemma follows.
\end{proof}

\medskip

\begin{lemma}\label{Crucial}
 Let $a\geq 0$ and $p>0$.
Let $f$ be a polynomial without a constant term.  Then
\[
\lim_{k\to\infty}2^{d'/2}\,\left(\frac{\pi}{k}\right)^{d/2} \tr[S^pf(S)] = \calF(t^pf(t)).
\]
\end{lemma}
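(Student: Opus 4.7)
The plan is to reduce to the polynomial case (Corollary \ref{TracePowers}) by uniformly approximating $t^p$ on $[0,R]$ by polynomials without constant term. Since $f$ has no constant term, I can write $f(t)=t\,g(t)$ for some polynomial $g$, and hence $|f(t)|\leq C_f\,t$ on $[0,R]$. Given $\epsilon>0$, the Weierstrass approximation theorem produces a polynomial $q_\epsilon$ with $\sup_{[0,R]}|t^p-q_\epsilon(t)|<\epsilon$; evaluating at $0$ (where $t^p=0$) shows $|q_\epsilon(0)|<\epsilon$, so subtracting off $q_\epsilon(0)$ I may assume $q_\epsilon(0)=0$ at the cost of at most doubling the error, making $q_\epsilon f$ itself a polynomial without constant term. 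I would then split
\[
\tr(S^p f(S)) = \tr(q_\epsilon(S)\, f(S)) + \tr\bigl((S^p-q_\epsilon(S))\,f(S)\bigr)
\]
and show, after multiplying through by $2^{d'/2}(\pi/k)^{d/2}$, that the first term converges to $\calF(q_\epsilon f)$ (which is within $O(\epsilon)$ of $\calF(t^p f)$) while the second is $O(\epsilon)$ uniformly in $k$.

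For the first term, Corollary \ref{TracePowers} applies directly to $q_\epsilon f$ and yields the limit $\calF(q_\epsilon f)$. To compare with $\calF(t^p f)$ I would apply Lemma \ref{funcional F} to $\varphi=(q_\epsilon-t^p)f$: using $f(t)=tg(t)$, the quotient $\varphi(t)/t = (q_\epsilon(t)-t^p)g(t)$ is continuous on $[0,R]$ with sup-norm at most $2\epsilon\|g\|_\infty$, so the lemma with exponent $p=1$ gives $|\calF((q_\epsilon-t^p)f)|\leq C\epsilon$.

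For the second term I would invoke the trace-class H\"older inequality
\[
|\tr((S^p-q_\epsilon(S))\,f(S))| \leq \|S^p-q_\epsilon(S)\|_{\mathrm{op}}\,\|f(S)\|_1.
\]
The operator-norm factor equals $\sup_{\mu\in\mathrm{spec}(S)}|\mu^p-q_\epsilon(\mu)|\leq 2\epsilon$ since $\mathrm{spec}(S)\subset[0,R]$. For the trace-class factor, $S\geq 0$ (because $T_{ad\sigma}$ is positive when $a\geq 0$), so $\|f(S)\|_1=\sum_j|f(\mu_j)|\leq C_f\sum_j\mu_j = C_f\,\tr(S)$, and Corollary \ref{TracePowers} applied to the polynomial $P(t)=t$ gives $\tr(S)=O(k^{d/2})$. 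Multiplying by $(\pi/k)^{d/2}$ then yields a uniform $O(\epsilon)$ bound, and letting $\epsilon\to 0$ closes the argument.

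The key observation that makes this H\"older estimate go through is that $f$ having no constant term forces $|f(t)|\leq C_f\,t$, so the trace-class norm of $f(S)$ is controlled by the \emph{first} moment $\tr(S)$, which by Corollary \ref{TracePowers} grows exactly like $k^{d/2}$ — precisely the rate needed to absorb the $(\pi/k)^{d/2}$ normalization. Consequently the present argument sidesteps Lemma \ref{MainLemma}; I expect that finer bound on $\tr(S^p)$ for $p\in(0,1)$ will be needed only at the next stage, when one extends the Szeg\"o theorem from functions of the form $t^pf(t)$ to general $\varphi\in\calC$.
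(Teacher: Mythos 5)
Your proof is correct and takes essentially the same route as the paper's: approximate $t^p$ uniformly by polynomials on $[0,R]$, apply Corollary~\ref{TracePowers} to the polynomial piece, control the functional discrepancy via Lemma~\ref{funcional F}, and bound the remainder by H\"older's inequality together with a polynomial trace estimate (the paper uses $\tr|f(S)|\leq\tr(\tilde f(S))$ with $\tilde f$ the absolute-value-coefficient polynomial; your bound $\norm{f(S)}_1\leq C_f\,\tr(S)$ is an equivalent variant, and the normalization $q_\epsilon(0)=0$ you arrange is harmless but not actually needed since $q_\epsilon f$ already lacks a constant term). Your closing observation that Lemma~\ref{MainLemma} is not invoked at this stage, and only becomes necessary when extending from $t^pf(t)$ to general $\varphi\in\calC$, is also correct and consistent with the paper's structure.
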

\begin{proof} Let $[0, R]$ be an interval containing the spectra of $S_{ad\sigma}$ for all $k$,
and let $(g_j)$ a sequence of polynomials such that
\[
\lim_{j\to\infty} g_j(t)  =  t^{p} \quad\text{uniformly on } [0, R].
\]
We can  estimate
\begin{equation}\label{thenEstimate}
\left| 2^{d'/2}\,\left(\frac{\pi}{k}\right)^{d/2}\tr[S^pf(S)]  - \calF(t^pf(t))\right| \leq
\text{I} + \text{II} + \text{III}
\end{equation}
where
\[
\text{I} = \left| 2^{d'/2}\,\left(\frac{\pi}{k}\right)^{d/2}\tr\left[(S^p  - g_j(S))\,f(S)\right]\right|,
\]
\[
\text{II} = \left|2^{d'/2}\,\left(\frac{\pi}{k}\right)^{d/2} \tr[g_j(S)f(S)] -\calF(g_j(t) f(t)) \right|,
\]
and
\[
\text{III} = \left|\calF(g_j(t)f(t))-t^pf(t))\right|.
\]
Let $\tilde{f}$ be the polynomial whose coefficients are the absolute values
of the coefficients of $f$.  Then $ \tr\left(\left|f(S)\right|\right)\leq \tr\left(\tilde{f}(S)\right)$
and therefore, using the \Sz theorem for polynomials (or just the estimates for the traces
of powers of $S$), we have
\begin{equation}\label{hijoles}
\text{I} \leq 2^{d'/2}\,\left(\frac{\pi}{k}\right)^{d/2}\,\tr\left(\left|f(S)\right|\right)\, 
\norm{S^p-g_j(S)}\leq C\norm{S^p-g_j(S)}
\end{equation}
for some $C>0$ and all $k$.  Let $\epsilon>0$.  There exists $j$ large enough so that 
\[
\norm{S^p-g_j(S)} < \frac{\epsilon}{3C}\quad\text{and}\quad \text{III}<\frac{\epsilon}{3}.
\]
The first of these conditions together with (\ref{hijoles}) implies that $I<\epsilon/3$.
Now for each $j$ the quantity II tends to zero as $k\to\infty$, by Corollary \ref{TracePowers}.
Therefore the left-hand side of (\ref{thenEstimate}) is less than $\epsilon$ if $k$ is
large enough.
\end{proof}

\medskip\noindent
{\em End of the proof of Theorem \ref{Szego}.}
Let $\varphi\in\calC$, that is, for some $p\in(0,1)$, the function
\[
\psi(t) := \frac{\varphi(t)}{t^p}
\]
is continuous on $[0,R]$.
In the argument below $p$ can be arbitrary in $(0,1)$.  Therefore, without loss of generality we can assume that $\psi(0)=0$.

Let $(f_\ell)$ be a sequence of polynomials such that
\[
\lim_{\ell\to\infty} f_\ell(t)  = \psi(t)\quad\text{uniformly on } [0, R].
\]
Without loss of generality we can assume that $f_\ell (0)=0$ for all $\ell$.  We have that
\[
\varphi(S) = S^{p}\,\lim_{\ell\to\infty} f_\ell(S)
\]
in the operator norm
uniformly in $S$ provided $\norm{S}$ remains bounded.  
Also, by Lemma  \ref{funcional F} 
\begin{equation}\label{limite de F}
\calF (\varphi)=\lim_{\ell\rightarrow\infty}\calF (t^pf_\ell).
\end{equation}
We can now estimate:
\[
\vert\tr \left[ \varphi(S)- S^pf_\ell(S)\right]\vert = 
\vert\tr \left[ S^{p}(\psi(S)-f_\ell(S))\right] \vert\leq \tr(S^{p})\, \norm{\psi(S)-f_\ell(S)}.
\]
By Lemma \ref{MainLemma} the numbers $k^{-d/2}\tr(S^{p})$ are bounded. 
Therefore there exists $C>0$ such that 
\[
k^{-d/2}\,\vert\tr \left[ \varphi(S)- S^pf_\ell(S)\right]\vert\leq 
C\norm{\psi(S)-f_\ell(S)}\xrightarrow[\ell\to\infty]{} 0,
\]
that is,
\begin{equation}\label{uniformity}
k^{-d/2}\,\tr (\varphi(S)) = \lim_{\ell\to\infty} k^{-d/2} \tr\left[S^{p}f_\ell(S)\right]
\end{equation}
uniformly in $k$.

Applying Lemma \ref{Crucial} (which is possible since $f_\ell(0)=0$ for all $\ell$)
and exchanging the limits $\ell\to\infty$, $k\to \infty$
we get
\begin{equation}\label{}
\lim_{k\to\infty}2^{d'/2}\,\left(\frac{\pi}{k}\right)^{d/2} \tr[\varphi(S)] = \calF(\varphi(t)),
\end{equation}
as desired.
\hfill{$\Box$}

\section{Asymptotics of the Schatten norms}

When $\Gamma$ is isotropic or co-isotropic one has (\ref{muchosS}), that is
\begin{equation*}
2^{d'/2}\left(\frac{k}{\pi}\right)^{-d/2}\tr(S_{a_1d\sigma}\cdots S_{a_nd\sigma})=\frac{1}{n^{d'/2}}\int_\Gamma\prod_{j=1}^n a_j(w)\,d\sigma(w)+O(1/k).
\end{equation*}
Hence  for any polynomial $p$ such that $p(0)=0$,
\begin{equation}\label{asintvariosa}
2^{d'/2}\left(\frac{k}{\pi}\right)^{-d/2}\tr\left[p(S_{ad\sigma}^{*}S_{ad\sigma})\right]=
\frac{1}{2^{d'/2}}\int_{\Gamma}\calO_{-d'/2} (p)(\left| a(w)\right|^2)\, d\sigma + O(1/k).
\end{equation}
Recall that a bounded operator $S$ in a Hilbert space  belongs to the Schatten class $S_r$, with $r>0$ if
 $$\Vert S\Vert_r= \left(\tr(\left| S\right|^r)\right)^{1/r}<\infty,$$
 where
  $\left| S\right|=\left(S^{*}S\right)^{1/2}.$
$\Vert S\Vert_r$ is a norm for $r\geq 1,$ and for $0<r<1$
\begin{equation}\label{seminorma}
\Vert S_1 + S_2\Vert_r^{r}\leq 2\left(\Vert S_1\Vert_r^{r} + \Vert S_2\Vert_r^{r}\right).
\end{equation}
see \cite[Ch X, Lemma 9.9]{DS}.

\medskip
After these preliminary remarks we now prove Theorem \ref{Schatten}.
\begin{proof}
  For $a=a_1+ia_2$ a smooth complex function in $\Gamma$ we can write
$$S_{ad\sigma}=S_{(a_1+C)d\sigma}+iS_{(a_2+C)d\sigma}-(1+i)S_{d\sigma},$$
where $C=\Vert a\Vert_{\infty}$ so all the operators in this linear combination have positive symbols.
From \eqref{seminorma} and  Lemma \ref{MainLemma}, it follows that  $k^{-d/2} \tr\left((S_{ad\sigma}^*S_{ad\sigma})^{r/2}\right)$ is bounded in $k$  for any $r>0$. 

Denote $A=S_{ad\sigma}^*S_{ad\sigma}$. Let $\epsilon>0$ and consider
a polynomial $p_1(t)$ such that \[
\Vert t^{r/4}-p_1\Vert_{L^{\infty}[0,R]}<\epsilon/4C_1(r),
\] 
where $ C_1(r)=\sup_k  2^{d'/2}\left(\frac{k}{\pi}\right)^{-d/2}\, \tr (A^{r/4})$.
Then we have 
\begin{align*}
2^{d'/2}\left(\frac{k}{\pi}\right)^{-d/2} \left| \tr\left( A^{r/2} -A^{r/4}p_1(A)\right)\right|&= 2^{d'/2}\left(\frac{k}{\pi}\right)^{-d/2} \left| \tr\left( A^{r/4} \left( A^{r/4} -p_1(A)\right)\right) \right|\\
&\leq  2^{d'/2}\left(\frac{k}{\pi}\right)^{-d/2}\tr\left( A^{r/4}\right)\Vert A^{r/4} -p_1(A)\Vert\leq \epsilon/4.
\end{align*}
Similarly, let
$C_2=\sup_k2^{d'/2}\left(\frac{k}{\pi}\right)^{-d/2} \tr \left(\left|p_1(A)\right|\right)<\infty$ 
and let $p_2$ be a polynomial  such that
$\Vert t^{r/4}-p_2\Vert_{L^{\infty}[0,R]}<\epsilon/4C_2$. Then
 \begin{equation*}
2^{d'/2}\left(\frac{k}{\pi}\right)^{-d/2} \left| \tr\left(p_2(A)p_1(A) -A^{r/4}p_1(A)\right)\right| = 2^{d'/2}\left(\frac{k}{\pi}\right)^{-d/2}\left| \tr(p_1(A)) \left( A^{r/4} -p_2(A)\right)\right|\leq \epsilon/4. 
\end{equation*}
Thus we have found a polynomial $p=p_1p_2$ such that
\begin{equation}\label{aproxtraza}
2^{d'/2}\left(\frac{k}{\pi}\right)^{-d/2}\left|\tr \left(A^{r/2}-p(A)\right)\right|\leq \epsilon/2.
\end{equation}
Notice that we can choose the 
$p_i$ so that $\Vert t^{r/4}-p_i\Vert_{L^{\infty}[0,R]}$ is small enough to have
\begin{equation}\label{aproxint}
2^{-d'/2}\int_{\Gamma}\left|\calO_{-d'/2} (t^{r/2})-\calO_{-d'/2} (p)\right| | a(w) |^2\,  d\sigma\leq\epsilon/4 .
\end{equation}
Finally, by \eqref{asintvariosa} there exists $M$ such that if $k>M$
\begin{equation}\label{limite p}
\left| 
2^{d'/2}\left(\frac{k}{\pi}\right)^{-d/2}\tr\left[p(A)\right]-
2^{-d'/2}\int_{\Gamma}\calO_{-d'/2} (p)( |a(w)|^2)\, d\sigma \right|<\epsilon/4.
\end{equation}
 
 Since  $2^{-d'/2}\int_{\Gamma}\calO_{-d'/2} (t^{r/2})\left( |a(w)|^2\right)\,  d\sigma(w)=\int_\Gamma  \frac{\left| a(w)\right|^r}{r^{d'/2}}  d\sigma(w),$ we have by \eqref{aproxtraza},\eqref{aproxint} and \eqref{limite p} that
 \begin{equation*}
 \left| 2^{d'/2}\left(\frac{k}{\pi}\right)^{-d/2}\tr\left(A^{r/2}\right)-\int_\Gamma  \frac{\left| a(w)\right|^r}{r^{d'/2}}  d\sigma(w)\right|\leq \epsilon
\end{equation*}
 if $k>M$ and the proof is complete.
\end{proof}

\section{Estimating $\lambda_{\text{\rm max}}$
when $\Gamma$ is a Bohr-Sommerfeld Lagrangian}

In this section we obtain an asymptotic 
lower bound for the greatest eigenvalue of $T_{ad\sigma}$,
under the assumption that $a$ is real-valued and
$\Gamma$ satisfies a Bohr-Sommerfeld condition.  
We begin by recalling that the greatest eigenvalue
of $T_{ad\sigma}$ is
\begin{equation}\label{}
\lambda_{\max}(k) = \sup_{\psi\in\calB_k\setminus\{0\}}
\frac{\int_\Gamma \left|\psi(z)\right|^2\, a(z)\,d\sigma(z)}{\norm{\psi}_{\calB_k}^2}.
\end{equation}
The Bohr-Sommerfeld
condition allows us to construct a sequence $\{\psi_k\in\calB_k\;;\; k=1,2,\cdots\}$
whose micro-support is $\Gamma$, and the lower bound is obtained by considering
the asymptotics as $k\to\infty$ of
\[
\frac{\int_\Gamma \left|\psi_k(z)\right|^2\, a(z)\,d\sigma(z)}{\norm{\psi_k}_{\calB_k}^2}.
\]

In this section we work with the symplectic form on $\bbC^N$ $\Omega = -2\omega$, 
that is
\[
\Omega = i\,\sum_{j=1}^N dz_j\wedge d\zbar_j
\]
which, if we write $z_j = \frac{1}{\sqrt 2}\left( q_j-ip_j\right)$
becomes $\Omega = \sum_j dp_j\wedge dq_j$.  This rescaling of $\omega$
of course does not change the notions of isotropic/co-isotropic.

The reproducing kernel of
$\calB_k$ is now
\begin{equation}\label{}
\Pi_k(z,w) = \left(\frac{k}{\pi}\right)^N\, e^{ik\left[ -\Omega(z,w) + i|z-w|^2\right]/2}.
\end{equation}

We will also need the potential one-form
\[
\eta = \frac{i}{2}\,\sum_{j=1}^N z_j d\zbar_j - \zbar_jdz_j
\]
so that $\Omega = d\eta$.
Denote by $\iota: \Gamma\hookrightarrow\bbC^N$ the inclusion.  By the hypothesis
that $\Gamma$ is isotropic we have:
\[
d\iota^*\eta = \iota^*d\eta  = \iota^*\Omega = 0,
\]
that is, $\iota^*\eta$ is a closed one-form on $\Gamma$. It is rare that it is an
exact form.  However the following is relatively more common:

\begin{definition}
We will say that $\Gamma$ satisfies the Bohr-Sommerfeld condition iff
there is a smooth map $\Phi: \Gamma\to S^1$ such that
\begin{equation}\label{bhrS1}
\Phi^{-1}\, d\Phi = i\iota^*\eta.
\end{equation}
\end{definition}
We henceforth assume that this condition holds.  It will be convenient to introduce
the notation
\[
\Phi = e^{i\vartheta} \quad\text{with}\quad \vartheta: \widetilde\Gamma\to\bbR,
\]
where $\widetilde\Gamma\to \Gamma$ is the universal cover of $\Gamma$.
We will abuse the notation and write $\Phi(w) = e^{i\vartheta(w)}$, identifying
$\Gamma$ with a fundamental domain in $\widetilde\Gamma$.  Condition
(\ref{bhrS1}) now reads
\begin{equation}\label{bhrS2}
d\vartheta = \iota^*\eta.
\end{equation}

\begin{definition}
Let $\alpha: \Gamma\to\bbR$ be a smooth function.  
With the previous notation we let $\forall k=1,2,\cdots$
\begin{equation}\label{}
\psi_k(z) := \int_\Gamma \Pi_k(z,\wbar)\, e^{ik\vartheta(w)}\, \alpha(w)\, d\sigma(w).
\end{equation}
\end{definition}
More specifically,
\begin{equation}\label{}
\psi_k(z) = \left(\frac{k}{\pi}\right)^N\,
\int_\Gamma \ 
e^{ik\left[ -\frac 12\Omega(z,w)+\vartheta(w)+\frac i2 |z-w|^2 \right]}\, \alpha(w)\, d\sigma(w).
\end{equation}
By the compactness of $\Gamma$ it is clear that $\psi_k\in\calB_k$.  In fact,
comparing with equation (\ref{rstar}), we see that
\begin{equation}\label{}
\psi_k = \calR^*(e^{ik\vartheta(w)}\,\alpha(w)).
\end{equation}

\medskip
We can estimate the norm of $\psi_k$ as follows
in case $\Gamma$ is lagrangian (for an analogous result on compact K\"ahler manifolds see
\cite{BPU}):

\begin{lemma}  If $\Gamma$ is lagrangian,
\begin{equation}\label{}
\norm{\psi_k}^2 = \left(\frac{2k}{\pi}\right)^{N/2} \int_\Gamma |\alpha(w)|^2\, d\sigma(w)
+O(k^{N/2-1}).
\end{equation}
\end{lemma}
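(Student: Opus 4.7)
The plan is to expand $\norm{\psi_k}^2$ as a double integral on $\Gamma \times \Gamma$ and apply the complex-phase stationary phase method, a simpler variant of the analysis already carried out in the proof of Theorem \ref{ManyOps}. Writing out $\inner{\psi_k}{\psi_k}_{\calB_k}$, using Fubini, and collapsing the $z$-integral via the reproducing property
\begin{equation*}
\int_{\bbC^N}\overline{\Pi_k(z,\bar w_1)}\,\Pi_k(z,\bar w_2)\,dL(z) = \Pi_k(w_1,\bar w_2),
\end{equation*}
one obtains
\begin{equation*}
\norm{\psi_k}^2 = \left(\frac{k}{\pi}\right)^N \int_{\Gamma\times \Gamma} e^{ik\Psi(w_1,w_2)}\,\overline{\alpha(w_1)}\,\alpha(w_2)\,d\sigma(w_1)\,d\sigma(w_2),
\end{equation*}
where $\Psi(w_1,w_2) = -\tfrac12 \Omega(w_1,w_2) + \vartheta(w_2) - \vartheta(w_1) + \tfrac{i}{2}|w_1-w_2|^2$. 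Since $\Im\Psi = \tfrac12|w_1-w_2|^2 \geq 0$ vanishes exactly on the diagonal, the integral outside any tubular neighborhood of the diagonal is $O(k^{-\infty})$ and can be discarded.

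Next I would verify that $\Psi$ is stationary along the diagonal. For a tangent vector $v\in T_w\Gamma$, a direct computation gives $d_{w_2}\Psi\bigl|_{w_1=w_2=w}(v) = -\tfrac12 \Omega(w,v) + d\vartheta|_w(v)$, and the Bohr-Sommerfeld condition \eqref{bhrS2} together with the identity $\eta_w(v) = \tfrac12 \Omega(w,v)$ makes this vanish; the $w_1$-derivative is handled symmetrically. Fixing a parametrization $\gamma:U\to\Gamma$ and changing variables to $(s,v) = (s, u-s)$, the remaining task is to compute the transverse Hessian $\partial_{v_i}\partial_{v_j} \Psi(\gamma(s),\gamma(s+v))\bigl|_{v=0}$. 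The contribution from $\Re\Psi$ decomposes into terms proportional to $\Omega(\vec\gamma_i(s),\vec\gamma_j(s))$ and terms of the form $\Omega(\gamma(s),\partial_i\vec\gamma_j - \partial_j\vec\gamma_i)$. The Lagrangian hypothesis makes the first type vanish identically, while the second type cancels by equality of mixed partials. Consequently the transverse Hessian reduces to $iG(s)$, where $G(s) = (g_{ij}(s))$ is the induced Riemannian metric on $\Gamma$. This is precisely where the proof needs the Lagrangian assumption and not merely the isotropic one, and this is the main technical step.

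Finally, I would apply stationary phase in $v$ at fixed $s$: the prefactor is $(2\pi/k)^{N/2}\det(G(s))^{-1/2}$, the amplitude at $v=0$ (including the two Jacobian factors $\sqrt{\det g}$ from the two copies of $d\sigma$) equals $|\alpha(\gamma(s))|^2 \det g(s)$, and $\Psi$ vanishes at the critical point so no oscillatory factor survives. Combining these ingredients and integrating in $s$, $|\alpha(\gamma(s))|^2\sqrt{\det g(s)}\,ds$ reassembles into $|\alpha(w)|^2\,d\sigma(w)$, and the arithmetic
\begin{equation*}
\left(\frac{k}{\pi}\right)^N\left(\frac{2\pi}{k}\right)^{N/2} = \left(\frac{2k}{\pi}\right)^{N/2}
\end{equation*}
yields the stated leading term, with the subleading term in stationary phase contributing the $O(k^{N/2-1})$ remainder.
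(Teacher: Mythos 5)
Your proof is correct and follows essentially the same route as the paper: reduce to a double integral over $\Gamma\times\Gamma$ via the reproducing property, localize near the diagonal using $\Im\Psi\geq 0$, and apply complex stationary phase in the transverse variable, where the Hessian reduces to $iG$ and the $\sqrt{\det G}$ factor cancels a Jacobian. One small misattribution worth correcting: the vanishing of $\Omega(\vec\gamma_i,\vec\gamma_j)$ in the real part of the Hessian already follows from isotropy of $\Gamma$ (that is the definition of an isotropic submanifold), not specifically from the Lagrangian hypothesis; what Lagrangianity actually supplies is $\dim\Gamma=N$, which is what fixes the power in $(2k/\pi)^{N/2}$, and in the paper's presentation it is also invoked to show $T^\perp_w\Gamma\cap T^\circ_w\Gamma=\{0\}$ so that $w_1=w_2$ is the unique critical point.
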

\begin{proof} By the reproducing property
\[
\norm{\psi_k}^2 = 
\int_{\Gamma\times\Gamma} \int_{\bbC^N}
\Pi_k(z,\wbar_1)\, e^{ik\vartheta(w_1)}\, \alpha(w_1)\, 
\Pi_k(w_2,\zbar)\, e^{-ik\vartheta(w_2)}\, \overline\alpha(w_2)\, dL(z)\,d\sigma(w_1)\, d\sigma(w_2) =
\]
\[
= \int_{\Gamma\times\Gamma}\Pi_k(w_2, w_1)\,
e^{ik(\vartheta(w_1)-\vartheta(w_2))}\, \alpha(w_1)\,
\overline\alpha(w_2)\, d\sigma(w_1)\, d\sigma(w_2) =
\]
\[
=\left(\frac{k}{\pi}\right)^{N}\int_{\Gamma}\int_\Gamma
e^{ik\left[-\frac 12 \Omega(w_2,w_1) + \vartheta(w_1)-\vartheta(w_2) + \frac i2 |w_1-w_2|^2\right]}
\, \alpha(w_1)\,\overline{\alpha}(w_2)\, d\sigma(w_1)\, d\sigma(w_2) .
\]
Now apply the method stationary phase in the inner integral (with respect to $w_1$) 
with $w_2$ fixed, that is, to
\begin{equation}\label{w1Int}
\int_\Gamma
e^{ik\left[ -\frac 12\Omega(w_2,w_1) + \vartheta(w_1) + \frac i2 |w_1-w_2|^2\right]}
\, \alpha(w_1)\, d\sigma(w_1) .
\end{equation}
In a local parametrization of $\Gamma$ $w_1 = w_1(t)$, $t\in U\subset\bbR^N$ an open set,
the derivative of the phase is 
\[
\frac{\partial\ }{\partial t_j}
\left[ \frac{1}{2i}(w_2{\wbar}_1-\wbar_2 w_1-|w_1-w_2|^2) + \vartheta(w_1)
\right]=
\]
\[
= \frac{1}{2i}(w_2\dot{\wbar}_1-\wbar_2 \dot{w}_1-\dot{w}_1(\wbar_1-\wbar_2)
-\dot{\wbar}_1(w_1-w_2) - 
w_1\dot{\wbar}_1 + \wbar_1\dot{w}_1) = i\dot{\wbar}_1(w_1-w_2).
\]
where we let $\dot{w_1} = \frac{\partial\ }{\partial t_j} w_1(t)$ for simplicity.

This shows that  the critical points of the phase in (\ref{w1Int}) are the values of $t$ such that 
\begin{equation}\label{}
\forall j=1,\ldots, d\qquad
\frac{\partial \wbar_1(t)}{\partial t_j}\cdot (w_1(t)-w_2) = 0.
\end{equation}
It is not hard to see that, in real terms,
\begin{equation}\label{}
\frac{\partial \wbar_1(t)}{\partial t_j}\cdot v = 0\quad\Leftrightarrow\quad
v\in T_{w_1}^\bot\Gamma\cap T_{w_1}^0\Gamma,
\end{equation}
where $T^\bot\Gamma$ is the metric orthogonal to $T\Gamma$ and
\[
T^\circ\Gamma = \{ w\in\bbR^{2N}\;;\; \forall u\in T\Gamma\ \Omega(u,v)=0\}
\]
is the symplectic annihilator of $T\Gamma$.  In the lagrangian case
\[
\forall w_1\in\Gamma \qquad T_{w_1}^\bot\Gamma\cap T_{w_1}^0\Gamma = 0,
\]
and therefore the only critical point of (\ref{w1Int}) is the value $t_0$ such that
$w_1(t_0)=w_2$.  The hessian matrix of the phase at the critical point is
\begin{equation}\label{}
i\begin{pmatrix}
\frac{\partial \wbar_1}{\partial t_i}\cdot\frac{\partial w_1}{\partial t_j}
\end{pmatrix}.
\end{equation}
To proceed, let us assume without loss of generality that the parametrization
of $\Gamma$ is such that the matrix $(g_{ij})$ of the metric is the identity
matrix at $t=t_0$.  Together with the assumption that $\Gamma$ is isotropic
this implies that $\begin{pmatrix}
\frac{\partial \wbar_1}{\partial t_i}\cdot\frac{\partial w_1}{\partial t_j}
\end{pmatrix}=I_{N\times N}$.  
Therefore the method of stationary phase gives that (\ref{w1Int}) equals
\begin{equation}\label{pointWise}
\left(\frac{2\pi}{k} \right)^{N/2}
e^{ik\vartheta(w_2)}\,\alpha(w_2)+ O(k^{-N/2-1}),
\end{equation}
and therefore
\begin{equation}\label{}
\norm{\psi_k}^2 = \left(\frac{2k}{\pi}\right)^{N/2} \int_\Gamma |\alpha(w)|^2\, d\sigma(w)
+O(k^{N/2-1}).
\end{equation}
\end{proof}

\bigskip
That (\ref{w1Int}) equals (\ref{pointWise}) gives the pointwise estimate
\begin{equation}\label{}
\forall z\in\Gamma\qquad
\psi_k(z) = \left(\frac{2k}{\pi}\right)^{N/2} \,e^{ik\vartheta(z)}\,\alpha(z)+ O(k^{N/2-1}),
\end{equation}
where the constants implicit in the $O$ estimate
can be taken uniformly on $z\in\Gamma$ by compactness.  Therefore
\begin{equation}\label{}
\int_\Gamma |\psi_k(z)|^2\, a(z)\, d\sigma(z) =
\left(\frac{2k}{\pi}\right)^{N}  \int_\Gamma |\alpha(z)|^2\,a(z)\, 
d\sigma(z) + O(k^{N-1}),
\end{equation}
and therefore
\begin{equation}\label{}
\frac{\int_\Gamma |\psi_k(z)|^2\,a(z)\, d\sigma(z) }{\norm{\psi_k}^2 }=
\left(\frac{2k}{\pi}\right)^{N/2} \,
\frac{\int_\Gamma |\alpha(z)|^2\,a(z)\, d\sigma(z) }
{\int_\Gamma |\alpha(w)|^2\, d\sigma(w)}
+O(k^{N/2-1}).
\end{equation}
Finally we obtain:
\begin{proposition}\label{LowerBound}  If $\lambda_{\max}(k)$ is the largest eigenvalue
of $T_{ad\sigma}$, then, for any $\alpha\in C_0^\infty(\Gamma)$ such 
that $\norm{\alpha}_{L^2}=1$
\begin{equation}\label{}
\lambda_{\max}(k)\geq 
\left(\frac{2k}{\pi}\right)^{N/2} \,
\int_\Gamma |\alpha(z)|^2\,a(z)\, d\sigma(z) 
+O(k^{N/2-1}).
\end{equation}
\end{proposition}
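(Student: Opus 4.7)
The plan is to derive the lower bound directly from the variational characterization of the largest eigenvalue, using the Bohr-Sommerfeld test state $\psi_k$ as a trial vector. Since $T_{ad\sigma}=\calR^*\circ (a\cdot\calR)$ by definition, one has the Rayleigh-quotient identity $\inner{T_{ad\sigma}\psi}{\psi}=\int_\Gamma |\psi(z)|^2 a(z)\,d\sigma(z)$ for every $\psi\in\calB_k$, and therefore
\[
\lambda_{\max}(k)\ \geq\ \frac{\int_\Gamma |\psi_k(z)|^2\,a(z)\,d\sigma(z)}{\norm{\psi_k}^2}
\]
for every choice of the amplitude $\alpha$ entering $\psi_k$. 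First I would record this identity and then fix $\alpha\in C_0^\infty(\Gamma)$ with $\norm{\alpha}_{L^2}=1$, so the task reduces to asymptotically evaluating the above ratio.

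Next I would insert the two asymptotics already established in the preceding lemmas. The lemma immediately before the proposition gives
\[
\norm{\psi_k}^2=\left(\frac{2k}{\pi}\right)^{N/2}\int_\Gamma|\alpha|^2\,d\sigma+O(k^{N/2-1}),
\]
which under the normalization reduces to $(2k/\pi)^{N/2}+O(k^{N/2-1})$. The same stationary-phase computation, read pointwise in $z\in\Gamma$ rather than integrated in $z$, produces the uniform estimate
\[
\psi_k(z)=\left(\frac{2k}{\pi}\right)^{N/2}e^{ik\vartheta(z)}\alpha(z)+O(k^{N/2-1}),
\]
valid uniformly on $\Gamma$ by compactness. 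Squaring this pointwise and integrating against $a\,d\sigma$ yields the numerator
\[
\int_\Gamma |\psi_k(z)|^2 a(z)\,d\sigma(z)=\left(\frac{2k}{\pi}\right)^N\int_\Gamma |\alpha(z)|^2 a(z)\,d\sigma(z)+O(k^{N-1}).
\]
Dividing numerator by denominator and using the normalization of $\alpha$ gives the claimed lower bound.

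The main subtlety, rather than a real obstacle, is error-term bookkeeping in the squaring step: the cross term between the principal part of $\psi_k$ (of size $k^{N/2}$) and its $O(k^{N/2-1})$ remainder contributes $O(k^{N-1})$ after integration against $a\,d\sigma$, which upon division by $\norm{\psi_k}^2$ of order $k^{N/2}$ produces exactly the stated $O(k^{N/2-1})$ error; the purely quadratic remainder is strictly smaller and absorbed. The Lagrangian hypothesis enters only through the preceding lemma, to ensure that the critical-point set of the inner stationary-phase integral collapses to the diagonal $w_1=w_2$ with nondegenerate Hessian; compactness of $\Gamma$ enters to promote the stationary-phase remainders to uniform ones so that the pointwise-to-integral step is legitimate. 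Since all the analytic content — the reproducing-kernel representation of $\psi_k$, the stationary-phase analysis, and the Hessian computation — has been carried out, the proposition is just the packaging of these estimates through the variational principle.
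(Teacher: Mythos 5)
Your proposal is correct and follows essentially the same route as the paper: plug the Bohr--Sommerfeld test state $\psi_k$ into the Rayleigh quotient, invoke the stationary-phase lemma for $\norm{\psi_k}^2$ together with the companion pointwise estimate for $\psi_k(z)$ on $\Gamma$, and divide. The error-term bookkeeping in your squaring step matches the paper's computation exactly.
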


\noindent{\em Remarks:}
\begin{enumerate}
\item In particular, if $a\equiv 1$ the asymptotic lower bound obtained is simply
$\left(\frac{2k}{\pi}\right)^{N/2} $.  It is universal (independent of $\Gamma$).
\item If we take $\alpha$ to be constant,  by virtue of Theorem (\ref{NormEst}) we can
conclude that $\exists C,\, C'>0$ such that
\begin{equation}\label{}
C\Vert a\Vert_{\infty}\,k^{N/2}\geq \lambda_{\max}(k) \geq C'\, k^{N/2}\,\inf(a).
\end{equation}
\end{enumerate}

\begin{appendix}
\section{Computing $\det$(Hessian)}

We present here the computation of the determinant of the matrix $S$ equal to
$-\sqrt{-1}$ times the Hessian, that is, the matrix (\ref{theHessian}).  
For convenience we write $q=n-1$.  The matrix (\ref{theHessian}), partitioned into 
a $q\times q$ array of $d\times d$ blocks, is equal to
\[
S_q =
\begin{pmatrix}
G & 0 &0 &\cdots &0\\
 0& G & 0&\cdots &0\\
0 & \ddots & \ddots &\ddots & 0\\
0 & \cdots & 0 &G & 0\\
0 & \cdots & \cdots &0 & G
\end{pmatrix}\times M_q
\]
where $M_q$ is the block tri-diagonal Toeplitz matrix
\[
M_q = 
\begin{pmatrix}
2I & \overline{B} &0 &\cdots &0\\
B & 2I & 0&\cdots &0\\
0 & \ddots & \ddots &\ddots & 0\\
0 & \cdots & B &2I & \overline{B}\\
0 & \cdots & \cdots & B & 2I
\end{pmatrix}
\]
with
\[
B := -I + iG^{-1}H.
\]
All blocks consist of $d\times d$ matrices.
Since $H$ is skew-symmetric $B$ is Hermtian and $M_q$ is symmetric.
We will prove that $\det(M_q) = \Delta_n^2$.

\medskip
In the calculation of $\det(S_q)$, we follow the approach of \cite{JRS}.
Let $\calR$ be the ring of $d\times d$ complex matrices generated by the identity
and the matrix
\[
W:=G^{-1}H.
\]
Clearly $\calR$ is a commutative ring (in particular $[B, \overline{B}\,]=0$).  The matrix $M_q$ is a
$q\times q$ matrix with entries in $\calR$.  Any such matrix has a determinant,
which we denote with a capital $D$, that is an element in $\calR$; in particular
\[
\Det(M_q)\in \calR.
\]
$\Det$ is defined by the usual formula which is unambigous since $\calR$ is commutative. 
All the usual rules for computing determinants carry over to computing $\Det$, and one
has the theorem that for any $d\times d$ matrix $L$ with entries in $\calR$,
its numerical determinant equals
\[
\det (L) = \det(\Det(L)).
\]
We will use this result to compute $\det(M_q)$, first recursively and later in closed form.  Since 
\[
\det(S_q) = \det(G)^q\,\det(M_q),
\]
we will obtain a formula for $\det(S_q)$.

\begin{lemma}\label{detCalculation}
Let
\[
Z = B\overline{B} = I+W^2.
\]
Then one has
\begin{equation}\label{obvio1}
\Det(M_1) = 2I,
\end{equation}
\begin{equation}\label{obvio2}
\Det(M_2) = 4I - Z = 3I-W^2,
\end{equation}
and for each $q=2,3,\ldots$
\begin{equation}\label{induccion}
\Det(M_{q+1}) = 2\Det(M_q) - Z\Det(M_{q-1}).
\end{equation}
\end{lemma}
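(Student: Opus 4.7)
The strategy is to exploit the commutativity of $\calR$ to run the usual Laplace expansion of determinants block-wise, treating each $d\times d$ block as a single element of $\calR$. Because $\calR$ is generated by $I$ and $W=G^{-1}H$, every standard determinant identity valid over a commutative ring (cofactor expansion, the formula for a block-triangular determinant, etc.) applies verbatim to $\Det(M_q)\in\calR$. So no Dieudonné-type noncommutative subtleties arise, and the proof is purely algebraic.

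The base cases are immediate. For $q=1$, $M_1=(2I)$ gives $\Det(M_1)=2I$. For $q=2$ a direct $2\times2$ expansion yields $\Det(M_2)=4I-B\bar B=4I-Z$. Recalling that $G$ and $H$ are real, $W=G^{-1}H$ is real, so $B=-I+iW$ has conjugate $\bar B=-I-iW$ and $Z=B\bar B=I+W^2$; substituting gives $\Det(M_2)=3I-W^2$, as claimed in \eqref{obvio2}.

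For the recurrence \eqref{induccion}, I expand $\Det(M_{q+1})$ by Laplace along the last block row. The only nonzero entries in that row are $B$ in block-column $q$ and $2I$ in block-column $q+1$. The complementary minor of the $(q+1,q+1)$ entry is $M_q$ itself, contributing $2I\cdot\Det(M_q)$ with sign $+$. The complementary minor of the $(q+1,q)$ entry, obtained by deleting the last block row and block-column $q$, is seen by inspection of the tridiagonal pattern to have the block lower-triangular form
\[
\begin{pmatrix} M_{q-1} & \mathbf{0}\\ (0,\ldots,0,B) & \bar B\end{pmatrix},
\]
whose $\Det$ equals $\bar B\,\Det(M_{q-1})$. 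With the cofactor sign $(-1)^{(q+1)+q}=-1$, the contribution from this entry is $-B\bar B\,\Det(M_{q-1})=-Z\,\Det(M_{q-1})$. Summing the two terms gives $\Det(M_{q+1})=2\Det(M_q)-Z\,\Det(M_{q-1})$, which is \eqref{induccion}.

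There is no real obstacle here once the block-Laplace expansion is acknowledged to be valid in a commutative ring; the only thing to verify carefully is the shape of the deleted minor in the mixed case, which is a routine bookkeeping step on the tridiagonal sparsity pattern. The closed-form evaluation of the recurrence (needed to match $\Det(M_{n-1})=\Delta_n^2$ and thereby confirm \eqref{inAppendix}) is a separate matter that I would handle afterwards by diagonalizing $W$ and solving the scalar Chebyshev-type recurrence obtained for each eigenvalue.
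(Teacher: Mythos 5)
Your proof is correct and is essentially the same as the paper's: the paper obtains the recurrence by cofactor expansion of the block-tridiagonal matrix along the top block row, while you expand along the bottom block row, which is a trivial mirror of the same argument. Your identification of the deleted minor as block lower-triangular with diagonal $\Det(M_{q-1})$ and $\bar B$ is the right bookkeeping step, and the commutativity of $\calR$ justifies treating the blocks as scalars, exactly as the paper does.
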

\begin{proof}
(\ref{obvio1}) and (\ref{obvio2}) are immediate, and (\ref{induccion}) is obtained
by expanding $\Det(M_{q+1})$ along the top row.
\end{proof}

\begin{corollary}
\[
\det(S_1) = 2^d\det(G),
\]
and
\[
\det(S_2) = \det(G)^2\times \det(3I-W).
\]
\end{corollary}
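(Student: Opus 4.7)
The plan is to combine Lemma \ref{detCalculation} with the general principle $\det(L)=\det(\Det(L))$ (valid for any square matrix $L$ with entries in the commutative ring $\calR$ generated by $I$ and $W=G^{-1}H$), together with the already-recorded factorization $\det(S_q)=\det(G)^q\det(M_q)$. These three ingredients immediately reduce the corollary to the explicit evaluations of $\Det(M_1)$ and $\Det(M_2)$ provided by the lemma, so there is no separate computation to perform beyond substitution.

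For the first case ($n=2$, $q=1$), the lemma gives $\Det(M_1)=2I$, and therefore
\[
\det(M_1)=\det(\Det(M_1))=\det(2I)=2^d,
\]
which yields $\det(S_1)=2^d\det(G)$ as claimed. For the second case ($n=3$, $q=2$), the lemma gives
\[
\Det(M_2)=4I-Z=3I-W^2,
\]
recalling that $Z=B\overline{B}=I+W^2$ since $B=-I+iW$ with $W$ real. Taking the ordinary determinant of this element of $\calR$ gives $\det(M_2)=\det(3I-W^2)$, hence $\det(S_2)=\det(G)^2\,\det(3I-W^2)$. (The statement of the corollary appears to contain a typographical $W$ in place of $W^2$; the correct expression coming from $Z=I+W^2$ is $3I-W^2$.)

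No genuine obstacle arises here: the only non-trivial input is the identity $\det(L)=\det(\Det(L))$ borrowed from \cite{JRS}, which is a standard fact about matrices over a commutative ring and has already been invoked in the setup preceding Lemma \ref{detCalculation}. The real work — and the natural continuation I would pursue immediately after stating the corollary — is to iterate the three-term recursion (\ref{induccion}) to obtain a closed-form expression for $\Det(M_q)$ in terms of Chebyshev-like polynomials evaluated at $Z=I+W^2$, then diagonalise over the joint eigenvectors of $W$ (whose non-zero eigenvalues come in conjugate pairs $\pm i\lambda_\ell$), and finally collect factors to recover $\det(S_q)=\det(G)^{n-1}\Delta_n^2$ in agreement with (\ref{inAppendix}). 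That final matching of Chebyshev polynomial values with the product formula (\ref{deltaene}) is where I would expect the only real bookkeeping effort to lie.
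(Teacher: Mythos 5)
Your proof is correct and follows the same route the paper intends: combine the factorization $\det(S_q)=\det(G)^q\det(M_q)$ with the identity $\det(L)=\det(\Det(L))$ and then substitute $\Det(M_1)=2I$ and $\Det(M_2)=3I-W^2$ from Lemma~\ref{detCalculation}. You are also right that the corollary as printed contains a typographical slip: since $Z=B\overline{B}=(-I+iW)(-I-iW)=I+W^2$, the second formula should read $\det(S_2)=\det(G)^2\det(3I-W^2)$, not $\det(3I-W)$.
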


\bigskip
It is possible to solve (\ref{induccion}) in closed form, as follows.  Let us write, for simplicity of
notation,
\[
D_q = \Det(M_q).
\]
The recursion relation $D_{q+1} = 2D_q - ZD_{q-1}$ can itself be
written in matrix form
\[
\begin{pmatrix}
D_q\\D_{q+1}
\end{pmatrix} = 
\begin{pmatrix}
0 & I\\
-Z & 2I
\end{pmatrix}\;
\begin{pmatrix}
D_{q-1}\\D_{q}
\end{pmatrix},
\]
and therefore,
introducing the matrix with entries in $\calR$
\[
T:= 
\begin{pmatrix}
0 & I\\
-Z & 2I
\end{pmatrix},
\]
we see that
\begin{equation}\label{recursivoMat}
\begin{pmatrix}
D_{q-1}\\D_{q}
\end{pmatrix} = 
T^{q-2}
\begin{pmatrix}
D_{1}\\D_{2}
\end{pmatrix} =
T^{q-2}
\begin{pmatrix}
2I\\ 3I-W^2
\end{pmatrix}.
\end{equation}
Let us diagonalize $T$ to compute its powers.  $\Det(T) = Z$, and
the ``eigenvalues" in $\calR$ of this matrix
are
\[
\Lambda_{1,2} := I \mp\sqrt{I-Z} = I\mp i W.
\]
One can check that the column vectors of 
\[
S = \begin{pmatrix} 
I & I \\
\Lambda_1 & \Lambda_2
\end{pmatrix}
\]
are eigenvectors of $T$.  More precisely
$
S^{-1} T S = 
\begin{pmatrix}
\Lambda_1 & 0\\
0 & \Lambda_2
\end{pmatrix},
$
and therefore
\[
T^q = S
\begin{pmatrix}
\Lambda_1^q & 0\\
0 & \Lambda_2^q
\end{pmatrix}
S^{-1}.
\]
A computation shows that
\[
S
\begin{pmatrix}
\Lambda_1^q & 0\\
0 & \Lambda_2^q
\end{pmatrix} =
\begin{pmatrix}
\Lambda_1^q & \Lambda_2^q\\
\Lambda_1^{q+1} & \Lambda_2^{q+1} 
\end{pmatrix}
\]
while, since $\Det(S) = \Lambda_2-\Lambda_1 = 2iW$,
\footnote{Assuming $W= G^{-1}H$ is invertible in $\calR$.  
Otherwise, perturb $W$ a little so that it is
invertible.  The final result will be a polynomial expression for $D_q$ in terms of $W$, 
so it will remain valid
in case $W$ is not invertible, by continuity.}
\[
S^{-1} = (2iW)^{-1} 
\begin{pmatrix}
\Lambda_2 & -I\\
-\Lambda_1 & I
\end{pmatrix}.
\]
Combining, we obtain
\[
T^q =  (2iW)^{-1} 
\begin{pmatrix}
\ast & \ast\\
\Lambda_1^{q+1}\Lambda_2 -\Lambda_2^{q+1}\Lambda_1 &
-\Lambda_1^{q+1} + \Lambda_2^{q+1}
\end{pmatrix}
\]
where the computation of the starred entries is not necessary, since we are only
interested in the equation for the second component of (\ref{recursivoMat}).

Let us now compute the bottom row of $T^q$.  Recalling that $\Lambda_1\Lambda_2 = Z$, 
\[
\Lambda_1^{q+1}\Lambda_2 -\Lambda_2^{q+1}\Lambda_1 = Z(\Lambda_1^q - \Lambda_2^q).
\]
Therefore
\begin{align}\label{align}
2iW\,\Det(M_{q+2}) &= 2(I+W^2)(\Lambda_1^q-\Lambda_2^q) - 
(3I-W^2)(\Lambda_1^{q+1} - \Lambda_2^{q+1}) =\nonumber \\
& = \Lambda_1^q\left(2I + 2W^2 - (3I-W^2)(I-iW)\right) -
\Lambda_2^q\left(2I + 2W^2 - (3I-W^2)(I+iW)\right).
\end{align}
The factor of $\Lambda_1^q$ in this expression is
\[
2I+2W^2-3I + 3iW +W^2-iW^3 = \left(iW-I\right)^3 = -\Lambda_1^3.
\]
Similarly, the factor of $-\Lambda_2^q$ in (\ref{align}) is
\[
2I+2W^2 - 3I - 3iW + W^2 + iW^3 = -(iW+I)^2 = -\Lambda_2^3.
\]
Substituting back into (\ref{align}) (and shifting the value of $q$) we obtain
\begin{equation}\label{align2}
2iW\,\Det(M_{q}) = -\Lambda_1^{q+1} + \Lambda_2^{q+1}.
\end{equation}

Now $\Lambda_{1,2} = I\mp iW$, therefore
\[
\Lambda_1^{q+1} - \Lambda_2^{q+1} = (I-iW)^{q+1} - (I+iW)^{q+1}.
\]
The right-hand side is a polynomial in $W$ without constant term.
Substituting back into (\ref{align2}) concludes the proof of:
\begin{proposition}\label{ClosedForm}
\[
\Det(M_q) = i\,\frac{(I-iW)^{q+1} - (I+iW)^{q+1}}{2W} = 
\sum_{j=0}^{\floor{\frac{q}{2}}} {q+1\choose 2j+1}(-1)^j\, W^{2j}.
\]
\end{proposition}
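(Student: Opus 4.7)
\textbf{Proof plan for Proposition \ref{ClosedForm}.}
Almost all the work has been done in deriving equation~(\ref{align2}), namely
\[
2iW\,\Det(M_q) = \Lambda_2^{q+1} - \Lambda_1^{q+1}.
\]
My plan is to divide through by $2iW$ (equivalently, multiply by $-i/(2W)$), substitute $\Lambda_1 = I - iW$, $\Lambda_2 = I+iW$, and then expand the two powers binomially.

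To make the division by $2iW$ legitimate, I will first assume $W = G^{-1}H$ is invertible in $\calR$, exactly as in the authors' footnote preceding Proposition~\ref{ClosedForm}. Under that assumption, dividing yields the first stated equality:
\[
\Det(M_q) \;=\; \frac{\Lambda_2^{q+1}-\Lambda_1^{q+1}}{2iW} \;=\; \frac{i}{2W}\bigl((I-iW)^{q+1}-(I+iW)^{q+1}\bigr).
\]
Since both sides of this identity are a priori polynomial in $W$ (the right side because $\Lambda_2^{q+1}-\Lambda_1^{q+1}$ is algebraically divisible by $\Lambda_2-\Lambda_1 = 2iW$ as a formal expression), the formula persists for arbitrary $W$ by continuity, so the invertibility hypothesis can be removed at the end.

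For the second equality I expand using the binomial theorem in the commutative ring $\calR$:
\[
(I-iW)^{q+1} - (I+iW)^{q+1} \;=\; \sum_{k=0}^{q+1}\binom{q+1}{k}\bigl((-i)^k - i^k\bigr)W^k.
\]
For even $k$, $(-i)^k = i^k$ and the term vanishes. For odd $k = 2j+1$, one computes $i^{2j+1} = i(-1)^j$ and $(-i)^{2j+1} = -i(-1)^j$, so the bracket equals $-2i(-1)^j$. Hence the displayed difference is $-2i \sum_{j=0}^{\floor{q/2}} \binom{q+1}{2j+1}(-1)^j W^{2j+1}$. Multiplying by $i/(2W)$ cancels the factor $W$ and turns $-2i \cdot i = 2$ divided by $2$ into $1$, leaving the sum
\[
\Det(M_q) \;=\; \sum_{j=0}^{\floor{q/2}}\binom{q+1}{2j+1}(-1)^j\,W^{2j},
\]
as required.

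The only real obstacle is bookkeeping signs and powers of $i$ and justifying the formal division by $W$; neither is substantive. Once the proposition is established, combining it with Lemma~\ref{detCalculation} and the identity $\det(S_q) = \det(G)^q \det(\Det(M_q))$ recovers the factor $\det(G)^{n-1}\Delta_n^2$ used in~(\ref{inAppendix}) via the eigenvalue decomposition of $W$ (its nonzero eigenvalues are $\pm i\lambda_\ell$, so $W^{2j}$ has eigenvalues $(-\lambda_\ell^2)^j$, and summing $\sum_j \binom{n}{2j+1}(-1)^j\lambda_\ell^{2j}$ telescopes into $\bigl((1+\lambda_\ell)^n - (1-\lambda_\ell)^n\bigr)/(2\lambda_\ell)$ after factoring out $\lambda_\ell$).
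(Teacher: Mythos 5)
Your proof is correct and takes essentially the same route as the paper: the authors derive the Proposition by substituting $\Lambda_{1,2}=I\mp iW$ into equation~(\ref{align2}) and dividing by $2iW$, handling non-invertible $W$ by the same polynomial/continuity remark (their footnote). You merely make explicit the binomial expansion that the paper leaves to the reader for the second equality, and the sign bookkeeping checks out.
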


Next we show:
\begin{lemma}
For each $t$, 
$W(t)$ is the matrix of the transformation
\[
K:= \left(\Pi_{\gamma(t)}\circ J\right) : T_{\gamma(t)}\Gamma\to T_{\gamma(t)}\Gamma
\]
in the basis $\{\gamma_i(t)\}$.
\end{lemma}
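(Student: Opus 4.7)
The plan is to unwind the definitions and use a short Gram-matrix argument. Write $K(\gamma_j(t)) = \sum_k K_{kj}\gamma_k(t)$ for the (sought-after) matrix entries of $K$ in the frame $\{\gamma_i(t)\}$. Taking the Euclidean inner product with $\gamma_i(t)$ and using the definition $g_{ik} = \gamma_i\cdot\gamma_k$ of the metric, this yields
\[
\gamma_i(t)\cdot K(\gamma_j(t)) = \sum_k g_{ik}(t)\,K_{kj},
\]
i.e., if $M$ denotes the matrix with entries $M_{ij} := \gamma_i\cdot K(\gamma_j)$, then $M = G\cdot (K_{kj})$.

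The next step is to identify $M$ with $H$. Since $\gamma_i(t)\in T_{\gamma(t)}\Gamma$ and $\Pi_{\gamma(t)}$ is the orthogonal projection onto $T_{\gamma(t)}\Gamma$, one has $\gamma_i\cdot\Pi_{\gamma(t)}(v) = \gamma_i\cdot v$ for every $v\in\bbR^{2N}$ (the component of $v$ normal to $\Gamma$ is killed by the inner product with $\gamma_i$). Applying this to $v = J(\gamma_j)$ and using $\omega(u,v) = u\cdot J(v)$,
\[
\gamma_i\cdot K(\gamma_j) = \gamma_i\cdot \Pi_{\gamma(t)}(J(\gamma_j)) = \gamma_i\cdot J(\gamma_j) = \omega(\gamma_i,\gamma_j) = H_{ij},
\]
by the definition (\ref{matrizH}) of $H$.

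Combining the two displays gives the matrix identity $H = G\cdot (K_{kj})$, hence $(K_{kj}) = G^{-1}H = W$, which is the claim. I do not foresee any real obstacle: the argument is purely linear-algebraic and the only subtlety is the observation that the orthogonal projection $\Pi_{\gamma(t)}$ drops out when paired against a tangent vector $\gamma_i$, which is immediate from self-adjointness of $\Pi_{\gamma(t)}$ together with $\Pi_{\gamma(t)}(\gamma_i) = \gamma_i$.
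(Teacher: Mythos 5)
Your argument is correct and is essentially the same as the paper's, just read in the opposite direction: the paper starts from the defining relation $GW=H$, rewrites $H_{ik}=\gamma_i\cdot J(\gamma_k)$ and peels off the $\gamma_i$ to conclude $\Pi J(\gamma_k)=\sum_j W_{jk}\gamma_j$, whereas you start from the matrix of $K$ and show it satisfies $G\cdot(K_{kj})=H$. Your extra sentence explaining why the projection $\Pi_{\gamma(t)}$ drops out when paired against a tangent vector (self-adjointness plus $\Pi(\gamma_i)=\gamma_i$) is a slightly more explicit rendering of the same observation the paper leaves implicit.
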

\begin{proof}
The definition of the matrix $W$
is equivalent to $GW=H$, where
\[
G = \Large( \gamma_i\cdot\gamma_j\Large)\quad\text{and}\quad 
\quad H = \Large(\omega(\gamma_i,\gamma_j)\Large).
\]
Since $\omega(v, w) = v\cdot J(w)$,
we can re-write this in the form
$
\forall i,\, k\  \sum_j G_{ij}W_{jk} = H_{ik},
$
or
\[
\forall i,\, k\qquad  \gamma_i\cdot\sum_j W_{jk}\, \gamma_j = \gamma_i\cdot J(\gamma_k).
\]
Since $\{\gamma_i\}$ is a basis of $V$, this is equivalent to
$
\Pi J(\gamma_k) = \sum_j W_{jk}\, \gamma_j.
$
\end{proof}

\medskip
As we already saw in \textsection 1, 
at any tangent space $V$ of $\Gamma$
$K$ is skew-adjoint.  We have denoted its non-zero eigenvalues (with multiplicities) as
\[
\pm\,i\lambda_\ell,\quad 0<\lambda_1\leq  \cdots\leq \lambda_r.
\]
Therefore the eigenvalues of $W^{2j}$ are $(-1)^j\lambda_\ell^2$, each with
double the multiplicity as eigenvalues of $K$, together with the eigenvalue zero with multiplicity $d-2r$.
Therefore, by (\ref{ClosedForm}) the eigenvalues of $\Det(M_q)$ are 
\begin{equation}\label{alternatively}
\sum_{j=0}^{\floor{\frac{q}{2}}} {q+1\choose 2j+1} \lambda_\ell^{2j} = 
\frac{(1+\lambda_\ell)^{q+1} - (1-\lambda_\ell)^{q+1}}{2\lambda_\ell}\qquad
\ell =1,\ldots , r,
\end{equation}
each with double the multiplicity, together with the eigenvalues corresponding to the kernel
of $W$, namely $(q+1)$ with multiplicity $d-2r$.

We can then conclude that
\begin{equation}\label{laRaizCuadrada}
\sqrt{\det(M_q)} = (q+1)^{\frac d2 - r}\,
\prod_{\ell=1}^r\,\frac{(1+\lambda_\ell)^{q+1} - (1-\lambda_\ell)^{q+1}}{2\lambda_\ell}
= \Delta_{q+1}.
\end{equation}

\end{appendix}

\end{document}